\newtheorem{thm}{Theorem}[section]
\newtheorem{cor}[thm]{Corollary}
\newtheorem{lem}[thm]{Lemma}
\newtheorem{prop}[thm]{Proposition}
\theoremstyle{definition}
\newtheorem{definition}[thm]{Definition}
\newtheorem{remark}[thm]{Remark}
\renewcommand{\epsilon}{\varepsilon}
\renewcommand{\phi}{\varphi}
\newcommand{\defeq}{\mathrel{\mathop:}=}
\DeclareMathOperator{\dom}{dom}
\DeclareMathOperator{\St}{St}
\DeclareMathOperator{\Aut}{Aut}
\DeclareMathOperator{\Sym}{Sym}
\DeclareMathOperator{\id}{id}
\begin{document}
%%%%%%%%%%%%%%%%%%%%%%%%%%%%%%%%

\setlist{noitemsep}

\author{Friedrich Martin Schneider}
\address{F. M. Schneider, Institute of Algebra, TU Dresden, 01062 Dresden, Germany }
\title{Asymptotic uniform complexity and amenability}
\date{\today}

\begin{abstract} 
  We introduce a novel quantity for general dynamical systems, which we call the \emph{asymptotic uniform complexity}. We prove an inequality relating the asymptotic uniform complexity of a dynamical system to its mean topological matching number. Furthermore, we show that the established inequality yields an exact equation for perfect Hausdorff dynamical systems. Utilizing these results, we conclude that a dynamical system is amenable if its asymptotic uniform complexity equals one, and that the converse is true for perfect Hausdorff dynamical systems, such as non-discrete Hausdorff topological groups acting on themselves. Moreover, we establish a characterization of amenability for topologically free dynamical systems on perfect Hausdorff uniform spaces by means of a similar invariant. Furthermore, we provide a sufficient criterion for amenability concerning groups of Lipschitz-automorphisms of precompact pseudo-metric spaces in terms of entropic dimension. Finally, we show that vanishing topological entropy implies the asymptotic uniform complexity to equal one.
\end{abstract}

\maketitle

%%%%%%%%%%%%%%%%%%%%%%%%%%%%%%%%%%%%%%%%%%
%%%%%%%%%%%%%%%%%%%%%%%%%%%%%%%%%%%%%%%%%%

\tableofcontents

\newpage

\section{Introduction}

Amenability ranges among the most powerful and well-recognized concepts in topological dynamics and topological group theory. A remarkable amount of literature has been dedicated to the analysis of amenable topological groups. For a more elaborate overview as well as some historical background information one may consult \cite{AnalysisOnSemigroups,brownc,runde,paterson}.

In \cite{SchneiderThom}, an invariant for general dynamical systems on uniform spaces is introduced. This quantity is defined in terms of matching numbers of a family of bipartite graphs associated with a given dynamical system and therefore called the \emph{mean topological matching number}. It is shown in \cite{SchneiderThom} that a dynamical system is amenable if its mean topological matching number equals one. Furthermore, it is proven that the converse is true provided that the considered dynamical system is Hausdorff and perfect, i.e., it does not contain any isolated points. Utilizing this result, it is shown that a Hausdorff topological group $G$ is amenable if and only if the mean topological matching number of the associated action of $G$ on itself (Definition~\ref{definition:amenable.topological.group}) equals one.

In the present paper we introduce another invariant for dynamical systems, which we call the \emph{asymptotic uniform complexity} (Definition~\ref{definition:asymptotic.uniform.complexity}). The asymptotic uniform complexity of a dynamical system is defined in terms of and therefore closely related to uniform complexities (Definition~\ref{definition:uniform.complexity}) of finite uniform coverings of the underlying uniform space. As it turns out, uniform complexities reveal some information about matchings in certain bipartite graphs associated with the considered finite uniform coverings (Lemma~\ref{lemma:uniform.refinement.map}). This suggests a connection between the asymptotic uniform complexity of a dynamical system and its mean topological matching number. Our aim is to investigate this connection and some of its consequences. In the course of the present work we provide several results in that direction (Theorem~\ref{theorem:first.main.theorem} and Theorem~\ref{theorem:second.main.theorem}). Applying the results of \cite{SchneiderThom}, we obtain some new characterizations of amenability (Corollary~\ref{corollary:amenable} and Theorem~\ref{theorem:third.main.theorem}). Particular attention is paid to the situation for topological groups (Corollary~\ref{corollary:groups} and Corollary~\ref{corollary:third.main.theorem}). Furthermore, we deduce two amenability criteria by relating the asymptotic uniform complexity to the entropic dimension (Theorem~\ref{theorem:metric.case}) and the topological entropy (Theorem~\ref{theorem:vanishing.entropy.implies.amenability}).

The paper is organized as follows. In Section~\ref{section:uniform.spaces} we recall all the necessary background knowledge regarding uniform spaces. In Section~\ref{section:amenability} we shall deal with means on function spaces in general and invariant means on dynamical systems in particular. In Section~\ref{section:matchings} we shall provide some auxiliary results about matchings in bipartite graphs, paying particular attention to bipartite graphs arising from coverings of sets. In Section~\ref{section:mean.topological.matching.number} we will summarize the main concepts and results of \cite{SchneiderThom}. In Section~\ref{section:asymptotic.uniform.complexity} we introduce the asymptotic uniform complexity, explore its connection to the mean topological matching number, and infer a corresponding characterization of amenability for perfect Hausdorff dynamical systems. Afterwards, Section~\ref{section:topologically.free.dynamical.systems} is devoted to an alternative characterization of amenability applying to topologically free dynamical systems. In Section~\ref{section:entropic.dimension} we provide a sufficient criterion for amenability concerning groups of Lipschitz-automorphisms of precompact pseudo-metric spaces in terms of entropic dimension. Finally, in Section~\ref{section:topological.entropy} we prove that vanishing topological entropy implies the asymptotic uniform complexity to equal one, which yields another amenability criterion.

\section{Uniform spaces}\label{section:uniform.spaces}

In this section we shall recall some basics concerning uniform spaces. For this purpose, we will follow the approach of \cite{Isbell}.

In order to introduce the concept of a uniform space, we shall need some set-theoretic basics. Let $X$ be a set. We denote by $\mathcal{P}(X)$ the set of all subsets of $X$. Let $\mathcal{U},\mathcal{V} \subseteq \mathcal{P}(X)$. We say that $\mathcal{V}$ \emph{refines} $\mathcal{U}$ and write $\mathcal{U} \preceq \mathcal{V}$ if \begin{displaymath}
	\forall V \in \mathcal{V} \, \exists U \in \mathcal{U} \colon \, V \subseteq U .
\end{displaymath} Furthermore, let $\mathcal{U} \wedge \mathcal{V} \defeq \mathcal{U} \cup \mathcal{V}$ and $\mathcal{U} \vee \mathcal{V} \defeq \{ U \cap V \mid U \in \mathcal{U}, \, V \in \mathcal{V} \}$. More generally, if $(\mathcal{U}_{i})_{i \in I}$ is a family of subsets of $\mathcal{P}(X)$, then we define $\bigwedge\nolimits_{i \in I} \mathcal{U}_{i} \defeq \bigcup\nolimits_{i \in I} \mathcal{U}_{i}$ and \begin{displaymath}
	\left. \bigvee\nolimits_{i \in I} \mathcal{U}_{i} \defeq \left\{ \bigcap\nolimits_{i \in I} U_{i} \, \right| (U_{i})_{i \in I} \in \prod\nolimits_{i \in I} \mathcal{U}_{i} \right\} .
\end{displaymath} For a subset $S \subseteq X$, we call $\St (S,\mathcal{U}) \defeq \bigcup \{ U \in \mathcal{U} \mid U \cap S \ne\emptyset \}$ the \emph{star} of $S$ with respect to $\mathcal{U}$. Likewise, given any $x \in X$, we call $\St (x,\mathcal{U}) \defeq \St (\{ x \},\mathcal{U})$ the \emph{star} of $x$ with respect to $\mathcal{U}$. Moreover, the \emph{star} of $\mathcal{U}$ is defined to be $\mathcal{U}^{\ast} \defeq \{ \St (U,\mathcal{U}) \mid U \in \mathcal{U} \}$. Besides, let $\mathcal{U}^{\ast, 0} = \mathcal{U}$ and $\mathcal{U}^{\ast, n+1} \defeq (\mathcal{U}^{\ast, n})^{\ast}$ for every $n \in \mathbb{N}$. We say that $\mathcal{V}$ is a \emph{star-refinement} of $\mathcal{U}$ and write $\mathcal{U} \preceq^{\ast} \mathcal{V}$ if $\mathcal{U} \preceq \mathcal{V}^{\ast}$. We shall call $\mathcal{U}$ a \emph{covering} of $X$ if $X = \bigcup \mathcal{U}$. In case $X$ is a topological space, an \emph{open covering} of $X$ is a covering of $X$ consisting entirely of open subsets of $X$. We denote by $\mathcal{C}(X)$ the set of all coverings of $X$. A \emph{uniformity} on $X$ is a non-empty subset $\mathcal{D} \subseteq \mathcal{C}(X)$ such that \begin{enumerate}
	\item[(1)] 	$\forall \mathcal{U} \in \mathcal{D} \, \forall \mathcal{V} \in \mathcal{C}(X) \colon \, \mathcal{V} \preceq \mathcal{U} \Longrightarrow \mathcal{V} \in \mathcal{D}$,
	\item[(2)]	$\forall \mathcal{U},\mathcal{V} \in \mathcal{D} \, \exists \mathcal{W} \in \mathcal{D} \colon \, \mathcal{U} \preceq^{\ast} \mathcal{W} , \, \mathcal{V} \preceq^{\ast} \mathcal{W}$.
\end{enumerate}

Now we come to uniform spaces. A \emph{uniform space} is a non-empty set $X$ equipped with a uniformity on $X$, whose elements are called the \emph{uniform coverings} of the uniform space $X$. Let $X$ be a uniform space. The set of all finite uniform coverings of $X$ shall be denoted by $\mathcal{N}(X)$. We call $X$ \emph{precompact} if, for every uniform covering $\mathcal{U}$ of $X$, there exists a finite uniform covering $\mathcal{V}$ of $X$ such that $\mathcal{V}$ refines $\mathcal{U}$. The \emph{topology of $X$} is defined as follows: a subset $S \subseteq X$ is \emph{open} in $X$ if, for every $x \in S$, there exists a uniform covering $\mathcal{U}$ of $X$ such that $\St (x,\mathcal{U}) \subseteq S$. Let $Y$ be another uniform space. A map $f \colon X \to Y$ is said to be \emph{uniformly continuous} if $f^{-1}(\mathcal{U}) \defeq \{ f^{-1}(U) \mid U \in \mathcal{U} \}$ is a uniform covering of $X$ whenever $\mathcal{U}$ is a uniform covering of $Y$. A bijection $f \colon X \to Y$ is called an \emph{isomorphism} if both $f$ and $f^{-1}$ are uniformly continuous maps. By an \emph{automorphism} of $X$, we mean an isomorphism from $X$ to itself. The group of all automorphisms of $X$ shall be denoted by $\Aut (X)$. Note that any uniformly continuous map between uniform spaces is continuous with regard to the respective topologies.

We shall frequently make use of the following observation concerning finite uniform coverings of uniform spaces.

\begin{lem}[\cite{Isbell}]\label{lemma:uniform.star.refinements} Let $X$ be a uniform space. If $\mathcal{U}$ is a finite uniform covering of $X$, then there exists a finite open uniform covering $\mathcal{V}$ of $X$ such that $\mathcal{U} \preceq^{\ast} \mathcal{V}$. \end{lem}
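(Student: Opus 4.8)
The plan is to realize $\mathcal{U}$ through a uniformly continuous map into the nerve simplex and to pull back a finite open star-refinement from there, so that finiteness is built in from the outset. Write $\mathcal{U}=\{U_{1},\dots,U_{n}\}$ and let $\Delta^{n-1}=\{t\in[0,1]^{n}\mid\sum_{i}t_{i}=1\}$ carry its usual compact metric uniformity, with canonical open covering $\mathcal{S}=\{S_{1},\dots,S_{n}\}$, where $S_{i}=\{t\in\Delta^{n-1}\mid t_{i}>0\}$. The first step is to produce a uniformly continuous map $f=(f_{1},\dots,f_{n})\colon X\to\Delta^{n-1}$ that is \emph{subordinate} to $\mathcal{U}$, meaning $f^{-1}(S_{i})=\{x\mid f_{i}(x)>0\}\subseteq U_{i}$ for every $i$; equivalently, a uniformly continuous partition of unity $(f_{i})_{i}$ whose cozero sets satisfy $\{x\mid f_{i}(x)>0\}\subseteq U_{i}$.

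Granting such an $f$, the remaining steps are routine. Since $\Delta^{n-1}$ is compact metric, the finite open covering $\mathcal{S}$ has a Lebesgue number $\lambda$, and a finite subcovering $\mathcal{R}$ of the covering by open balls of radius $\lambda/4$ is a finite open star-refinement of $\mathcal{S}$, i.e.\ $\mathcal{S}\preceq^{\ast}\mathcal{R}$. I then set $\mathcal{V}\defeq f^{-1}(\mathcal{R})=\{f^{-1}(R)\mid R\in\mathcal{R}\}$. This $\mathcal{V}$ is finite; it is a uniform covering of $X$ because $f$ is uniformly continuous and $\mathcal{R}$ is a finite open, hence uniform, covering of $\Delta^{n-1}$; and each member is open since $f$ is continuous and each $R$ is open. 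For the star-refinement property, note that $f^{-1}(R')\cap f^{-1}(R)\neq\emptyset$ forces $R'\cap R\neq\emptyset$, whence
\[
\St\bigl(f^{-1}(R),\mathcal{V}\bigr)\subseteq f^{-1}\bigl(\St(R,\mathcal{R})\bigr)\subseteq f^{-1}(S_{i})\subseteq U_{i}
\]
for a suitable $i$, using $\mathcal{S}\preceq^{\ast}\mathcal{R}$ and subordination. Hence $\mathcal{V}^{\ast}$ refines $\mathcal{U}$, that is $\mathcal{U}\preceq^{\ast}\mathcal{V}$, and $\mathcal{V}$ is the required finite open uniform covering.

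The construction of $f$ is the heart of the matter and the main obstacle, and it is the one place where uniformity, rather than mere topology, is used. I would build it from a \emph{normal sequence} $\mathcal{U}\preceq^{\ast}\mathcal{U}_{1}\preceq^{\ast}\mathcal{U}_{2}\preceq^{\ast}\cdots$ of uniform coverings, obtained by iterating axiom~(2) of the uniformity. The reflexive symmetric relations $E_{k}=\{(x,y)\mid y\in\St(x,\mathcal{U}_{k})\}$ satisfy the chaining condition provided by star-refinement, so the standard Frink-type metrization lemma yields a uniformly continuous pseudometric $d$ on $X$ with the sets $\{d<2^{-k}\}$ trapped between consecutive $E_{k}$; in particular each $\St(x,\mathcal{U}_{1})$ lies in some $U_{i}$, so $\mathcal{U}_{1}$ has a positive Lebesgue scale $\delta$ for $d$. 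Setting $g_{i}(x)=\min\{\delta,\,d(x,X\setminus U_{i})\}$, the $g_{i}$ are uniformly continuous, vanish outside $U_{i}$, and satisfy $\sum_{i}g_{i}\ge\delta>0$ everywhere (each $x$ has $\St(x,\mathcal{U}_{1})\subseteq U_{i}$ for some $i$, forcing $d(x,X\setminus U_{i})\ge\delta$); normalising $f_{i}=g_{i}/\sum_{j}g_{j}$ then gives the desired subordinate uniformly continuous partition of unity. Verifying uniform continuity of the normalisation, which rests on the uniform lower bound for the denominator coming from the Lebesgue scale of $\mathcal{U}_{1}$, is the only point that needs genuine care.

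Finally, a word on why finiteness is the delicate feature and cannot be arranged by the naive device of indexing a star-refinement by $\mathcal{U}$ itself. Axiom~(2) only supplies a star-refinement that may well be infinite, and a finite uniform covering can fail to admit \emph{any} star-refinement with at most $|\mathcal{U}|$ members: for the two-element covering $\{(-\infty,1),(-1,\infty)\}$ of the connected space $\mathbb{R}$, every two-element covering has both stars equal to the whole space, so at least three members are needed. The finite-dimensional nerve $\Delta^{n-1}$ circumvents this by providing, in a single fixed model, a star-refinement with exactly as many (finitely many) members as the combinatorics demand, and the pullback transports finiteness to $X$ automatically. As an alternative route for the final open-refinement once a \emph{finite} uniform star-refinement $\mathcal{W}$ of $\mathcal{U}$ is available by other means, one may invoke the elementary facts that $\operatorname{int}(A)=\{x\mid\St(x,\mathcal{W}')\subseteq A\text{ for some uniform covering }\mathcal{W}'\}$ and that $\mathcal{U}\preceq^{\ast}\mathcal{A}$ together with $\mathcal{A}\preceq\mathcal{V}$ implies $\mathcal{U}\preceq^{\ast}\mathcal{V}$; then $\{\operatorname{int}(W)\mid W\in\mathcal{W}\}$ is at once a finite open uniform covering star-refining $\mathcal{U}$.
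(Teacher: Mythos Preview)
The paper does not supply its own proof of this lemma; it is quoted from Isbell with a bare citation, so there is nothing in the paper to compare against directly. Your argument is correct and is in fact the standard route one finds in Isbell: manufacture a uniformly continuous partition of unity subordinate to $\mathcal{U}$ (equivalently, a uniformly continuous map into the nerve simplex $\Delta^{n-1}$) via a normal sequence and the Frink metrization lemma, then pull back a finite open star-refinement from the compact target. The key verifications you flag---the Lebesgue scale $\delta$ for $\mathcal{U}$ relative to the pseudometric $d$, the uniform lower bound $\sum_{j}g_{j}\ge\delta$ guaranteeing uniform continuity of the normalisation, and the inclusion $\St(f^{-1}(R),\mathcal{V})\subseteq f^{-1}(\St(R,\mathcal{R}))$---are all in order. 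It is worth noting that the paper itself leans on exactly this construction elsewhere: the remark preceding Theorem~\ref{theorem:mean.topological.matching.number} invokes the existence of a uniformly continuous partition of unity subordinate to any finite uniform covering, again with a citation to Isbell, so your approach is fully aligned with the toolkit the paper takes for granted.
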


\begin{cor}\label{corollary:uniform.partition} Let $X$ be a uniform space. For $x \in X$, let $P(x) \defeq \bigcap \{ \St (x,\mathcal{U}) \mid \mathcal{U} \in \mathcal{N}(X) \}$. Then $\mathcal{P} \defeq \{ P(x) \mid x \in X \}$ is a partition of $X$. \end{cor}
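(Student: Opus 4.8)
The plan is to exhibit $\mathcal{P}$ as the set of equivalence classes of a suitable equivalence relation on $X$, so that it is automatically a partition. Concretely, I would define a relation $\sim$ on $X$ by declaring $x \sim y$ if and only if $y \in P(x)$. Since every finite uniform covering $\mathcal{U}$ covers $X$, for each $x$ there is some $U \in \mathcal{U}$ with $x \in U$, whence $U \subseteq \St(x,\mathcal{U})$ and in particular $x \in \St(x,\mathcal{U})$; as this holds for all $\mathcal{U} \in \mathcal{N}(X)$, we obtain $x \in P(x)$. This shows that $\sim$ is reflexive and that every member of $\mathcal{P}$ is non-empty and that the members of $\mathcal{P}$ cover $X$. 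The decisive elementary observation is that, for any covering $\mathcal{U}$, one has $y \in \St(x,\mathcal{U})$ if and only if there exists $U \in \mathcal{U}$ with $\{x,y\} \subseteq U$; this condition is visibly symmetric in $x$ and $y$, so $y \in \St(x,\mathcal{U}) \Leftrightarrow x \in \St(y,\mathcal{U})$ for every $\mathcal{U}$, and hence $y \in P(x) \Leftrightarrow x \in P(y)$. Thus $\sim$ is symmetric.

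The crux is transitivity, and this is where the star-refinement structure of the uniformity must be used. Suppose $y \in P(x)$ and $z \in P(y)$, and fix an arbitrary $\mathcal{U} \in \mathcal{N}(X)$; the goal is to show $z \in \St(x,\mathcal{U})$. Invoking Lemma~\ref{lemma:uniform.star.refinements}, I would choose a finite uniform covering $\mathcal{V}$ of $X$ with $\mathcal{U} \preceq^{\ast} \mathcal{V}$, i.e.\ $\mathcal{U} \preceq \mathcal{V}^{\ast}$. Because $\mathcal{V} \in \mathcal{N}(X)$ and $y \in P(x)$, there is $V_{1} \in \mathcal{V}$ with $\{x,y\} \subseteq V_{1}$; similarly, since $z \in P(y)$, there is $V_{2} \in \mathcal{V}$ with $\{y,z\} \subseteq V_{2}$. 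As $y \in V_{1} \cap V_{2}$, the sets $V_{1}$ and $V_{2}$ meet, so $V_{2} \subseteq \St(V_{1},\mathcal{V})$ and in particular $z \in \St(V_{1},\mathcal{V})$. Now $\St(V_{1},\mathcal{V}) \in \mathcal{V}^{\ast}$, and $\mathcal{U} \preceq \mathcal{V}^{\ast}$ furnishes some $U \in \mathcal{U}$ with $\St(V_{1},\mathcal{V}) \subseteq U$. Since $x \in V_{1} \subseteq \St(V_{1},\mathcal{V}) \subseteq U$ and $z \in \St(V_{1},\mathcal{V}) \subseteq U$, we conclude $\{x,z\} \subseteq U$, hence $z \in \St(x,\mathcal{U})$. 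As $\mathcal{U} \in \mathcal{N}(X)$ was arbitrary, $z \in P(x)$, which establishes transitivity.

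Having shown that $\sim$ is an equivalence relation, I would finish by identifying the $\sim$-equivalence class of $x$ as $\{ y \in X \mid x \sim y \} = \{ y \in X \mid y \in P(x) \} = P(x)$. Consequently $\mathcal{P}$ coincides exactly with the collection of $\sim$-equivalence classes, and this collection is a partition of $X$, as required. The only genuinely delicate point in the entire argument is ensuring, by means of Lemma~\ref{lemma:uniform.star.refinements}, that the star-refinement $\mathcal{V}$ can be chosen \emph{finite}, so that it is a legitimate member of $\mathcal{N}(X)$ appearing in the defining intersections $P(x)$ and $P(y)$; without this the star-refinement axiom would only supply an arbitrary uniform covering, which need not be one of the coverings used to define $P(\cdot)$. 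Every remaining step is a direct unwinding of the definitions of star and star-refinement.
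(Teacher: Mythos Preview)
Your proof is correct and follows essentially the same approach as the paper: both arguments hinge on Lemma~\ref{lemma:uniform.star.refinements} to obtain a finite star-refinement $\mathcal{V}$ and then use the inclusion $\St(V,\mathcal{V}) \subseteq U$ for some $U \in \mathcal{U}$ to pass from two overlapping members of $\mathcal{V}$ to a single member of $\mathcal{U}$. The only cosmetic difference is that the paper argues directly that $P(x)\cap P(y)\neq\emptyset$ implies $P(x)\subseteq P(y)$, whereas you package the same computation as transitivity of an equivalence relation; the underlying chain-of-stars argument is identical.
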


\begin{proof} Evidently, $X = \bigcup \mathcal{P}$. We shall verify that $P(x) = P(y)$ for all $x,y \in X$ where $P(x) \cap P(y) \ne \emptyset$. To this end, let $x,y \in X$ such that $P(x) \cap P(y) \ne \emptyset$. We show that $P(x) \subseteq P(y)$. For this purpose, let $\mathcal{U} \in \mathcal{N}(X)$. By Lemma~\ref{lemma:uniform.star.refinements}, there exists $\mathcal{V} \in \mathcal{N}(X)$ such that $\mathcal{U} \preceq^{\ast} \mathcal{V}$. We argue that $\St (x,\mathcal{V}) \subseteq \St (y,\mathcal{U})$. To this end, let $V \in \mathcal{V}$ where $x \in V$. Furthermore, let $V_{0} \in \mathcal{V}$ such that $y \in V_{0}$. By assumption, there exists some $z \in P(x) \cap P(y)$. Since $z \in V \cap V_{0}$, we conclude that $V \subseteq \St(z,\mathcal{V}) \subseteq \St(V_{0},\mathcal{V}) \subseteq \St (y,\mathcal{U})$. This substantiates that $P(x) \subseteq \St (x,\mathcal{V}) \subseteq \St (y,\mathcal{U})$. Therefore, $P(x) \subseteq P(y)$. Due to symmetry, it follows that $P(x) = P(y)$. Consequently, $\mathcal{P}$ is a partition of $X$. \end{proof}

As it turns out, uniformities are closed with respect to certain kinds of excision operations.

\begin{lem}\label{lemma:excision.of.uniform.coverings.a} Let $X$ be a uniform space, let $\mathcal{U}$ and $\mathcal{V}$ be uniform coverings of $X$, and let $S \subseteq X$. Then $\mathcal{W} \defeq \{ U\setminus S \mid U \in \mathcal{U} \} \cup \{ \St (x,\mathcal{V}) \mid x \in S \}$ is a uniform covering of $X$. \end{lem}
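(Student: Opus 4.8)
The plan is to exploit that a uniformity is upward closed under coarsening: by axiom~(1), once we know that $\mathcal{W}$ is a covering of $X$ and that some \emph{uniform} covering refines $\mathcal{W}$, we may immediately conclude $\mathcal{W} \in \mathcal{D}$. So the argument decomposes into three tasks: checking $\mathcal{W} \in \mathcal{C}(X)$, producing a conveniently chosen uniform covering finer than $\mathcal{W}$, and verifying the refinement. The natural candidate for the finer covering is a common refinement of $\mathcal{U}$ and $\mathcal{V}$, which is where the two families defining $\mathcal{W}$ get simultaneously controlled.

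First I would check that $\mathcal{W}$ covers $X$. Since $\mathcal{U}$ is a covering, the sets $U \setminus S$ with $U \in \mathcal{U}$ together cover $X \setminus S$; and for each $x \in S$ we have $x \in \St(x,\mathcal{V})$ because $\mathcal{V}$ covers $X$, so the stars $\St(x,\mathcal{V})$ with $x \in S$ cover $S$. Hence $\bigcup \mathcal{W} = X$, i.e.\ $\mathcal{W} \in \mathcal{C}(X)$. Next I would invoke axiom~(2) to obtain a uniform covering $\mathcal{W}_{0}$ with $\mathcal{U} \preceq^{\ast} \mathcal{W}_{0}$ and $\mathcal{V} \preceq^{\ast} \mathcal{W}_{0}$. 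Since every $W \in \mathcal{W}_{0}$ satisfies $W \subseteq \St(W,\mathcal{W}_{0})$, star-refinement entails ordinary refinement, so $\mathcal{W}_{0}$ refines both $\mathcal{U}$ and $\mathcal{V}$; notably I expect no genuine use of the star property beyond this, plain common refinement being all that is needed.

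The core step is then to show that $\mathcal{W}_{0}$ refines $\mathcal{W}$, treating each $W \in \mathcal{W}_{0}$ according to whether it meets $S$. If $W \cap S = \emptyset$, then choosing $U \in \mathcal{U}$ with $W \subseteq U$ gives $W = W \setminus S \subseteq U \setminus S \in \mathcal{W}$. If instead $W \cap S \neq \emptyset$, I would fix a point $x \in W \cap S$ and choose $V_{0} \in \mathcal{V}$ with $W \subseteq V_{0}$; then $x \in W \subseteq V_{0}$, so $V_{0}$ is one of the $\mathcal{V}$-members containing $x$, whence $V_{0} \subseteq \St(x,\mathcal{V})$ and therefore $W \subseteq V_{0} \subseteq \St(x,\mathcal{V}) \in \mathcal{W}$ (using $x \in S$). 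This second case is the main obstacle, and the key observation that unlocks it is that membership of the selected point $x$ in the refining set $V_{0}$ forces $V_{0}$ into the star $\St(x,\mathcal{V})$; it is also the point at which one sees that a mere common refinement, rather than an elaborate construction adapted to $S$, suffices.

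Having established that $\mathcal{W}_{0}$ refines $\mathcal{W}$, i.e.\ $\mathcal{W} \preceq \mathcal{W}_{0}$, I would finish by applying axiom~(1) to the uniform covering $\mathcal{W}_{0}$ and the covering $\mathcal{W} \in \mathcal{C}(X)$, concluding $\mathcal{W} \in \mathcal{D}$, as desired.
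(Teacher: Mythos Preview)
Your proof is correct and follows essentially the same approach as the paper: exhibit a uniform common refinement of $\mathcal{U}$ and $\mathcal{V}$, then case-split on whether a member meets $S$ to see it refines $\mathcal{W}$. The only cosmetic difference is that the paper takes $\mathcal{U} \vee \mathcal{V}$ as the refining uniform covering (and splits on whether $V \cap S \neq \emptyset$), whereas you invoke axiom~(2) to get a common star-refinement $\mathcal{W}_{0}$ and then, as you yourself note, use only the ordinary refinement it provides.
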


\begin{proof} We argue that $\mathcal{U} \vee \mathcal{V}$ refines $\mathcal{W}$. To this end, let $U \in \mathcal{U}$ and $V \in \mathcal{V}$. If $V \cap S \ne \emptyset$, then there is $x \in S$ where $U \cap V \subseteq V \subseteq \St (x,\mathcal{V}) \in \mathcal{W}$. Otherwise, $U \cap V = (U \cap V)\setminus S \subseteq U\setminus S \in \mathcal{W}$. Since $\mathcal{U} \vee \mathcal{V}$ is a uniform covering of $X$, it follows that $\mathcal{W}$ is a uniform covering of $X$. \end{proof}

\begin{lem}\label{lemma:excision.of.uniform.coverings.b} Let $X$ be a uniform space, let $\mathcal{U}$ be a uniform covering of $X$, and let $F \subseteq X$ be finite. If $\mathcal{V}$ is a set of open subsets of $X$ such that $F \subseteq \bigcup \mathcal{V}$, then $\mathcal{W} \defeq \{ U\setminus F \mid U \in \mathcal{U} \} \cup \mathcal{V}$ is a uniform covering of $X$. \end{lem}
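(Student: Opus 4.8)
The plan is to reduce the statement to Lemma~\ref{lemma:excision.of.uniform.coverings.a}: I will produce a uniform covering that refines $\mathcal{W}$ and then invoke the closure of the uniformity under coarsening (condition~(1) in the definition of a uniformity), exactly as in the proof of Lemma~\ref{lemma:excision.of.uniform.coverings.a} itself. First I would record that $\mathcal{W}$ is a covering of $X$: since $\mathcal{U}$ covers $X$, we have $\bigcup \{ U \setminus F \mid U \in \mathcal{U} \} = \left( \bigcup \mathcal{U} \right) \setminus F = X \setminus F$, while $\bigcup \mathcal{V} \supseteq F$ by hypothesis, and hence $\bigcup \mathcal{W} = X$.

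The substantive step is to replace the abstract open members of $\mathcal{V}$ by uniform stars so that Lemma~\ref{lemma:excision.of.uniform.coverings.a} becomes applicable. For each $x \in F$, I would use $F \subseteq \bigcup \mathcal{V}$ to select $V_{x} \in \mathcal{V}$ with $x \in V_{x}$. As $V_{x}$ is open and contains $x$, the definition of the topology of $X$ supplies a uniform covering $\mathcal{U}_{x}$ with $\St(x,\mathcal{U}_{x}) \subseteq V_{x}$. Because $F$ is finite, repeated application of condition~(2) of the uniformity (together with the observation that a star-refinement is in particular a refinement) yields a single uniform covering $\mathcal{V}'$ refining every $\mathcal{U}_{x}$; since $\mathcal{U}_{x} \preceq \mathcal{V}'$ implies $\St(x,\mathcal{V}') \subseteq \St(x,\mathcal{U}_{x})$, this gives $\St(x,\mathcal{V}') \subseteq V_{x}$ for every $x \in F$.

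With $\mathcal{V}'$ in hand, I would apply Lemma~\ref{lemma:excision.of.uniform.coverings.a} with $S = F$ to the uniform coverings $\mathcal{U}$ and $\mathcal{V}'$, obtaining the uniform covering $\mathcal{W}' \defeq \{ U \setminus F \mid U \in \mathcal{U} \} \cup \{ \St(x,\mathcal{V}') \mid x \in F \}$. This $\mathcal{W}'$ refines $\mathcal{W}$: each set $U \setminus F$ already lies in $\mathcal{W}$, while each $\St(x,\mathcal{V}')$ is contained in $V_{x} \in \mathcal{V} \subseteq \mathcal{W}$. Thus $\mathcal{W}$ is a covering of $X$ refined by the uniform covering $\mathcal{W}'$, so condition~(1) forces $\mathcal{W}$ to be a uniform covering.

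I expect the only genuinely nontrivial point to be the construction of a single uniform covering $\mathcal{V}'$ whose $x$-stars simultaneously fit inside the prescribed open sets $V_{x}$ for all $x \in F$. This is precisely where both hypotheses are consumed: finiteness of $F$ lets me take a common refinement of the finitely many $\mathcal{U}_{x}$ via the directedness encoded in condition~(2), and openness of the members of $\mathcal{V}$ is what lets the topology produce the coverings $\mathcal{U}_{x}$ in the first place. The remaining verifications merely parallel the proof of Lemma~\ref{lemma:excision.of.uniform.coverings.a}.
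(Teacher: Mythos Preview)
Your argument is correct and follows essentially the same route as the paper: both construct a single uniform covering whose stars at the points of $F$ lie inside members of $\mathcal{V}$, and then use a refinement to conclude. The only cosmetic difference is that you invoke Lemma~\ref{lemma:excision.of.uniform.coverings.a} as a black box to obtain the intermediate uniform covering $\mathcal{W}'$, whereas the paper carries out the corresponding case analysis (on whether a member of the auxiliary covering meets $F$) directly on $\mathcal{U} \vee \mathcal{Z}$.
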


\begin{proof} Since $\mathcal{V}$ is a set of open subsets of $X$ and $F$ is a finite subset of $\bigcup \mathcal{V}$, there exists a uniform covering $\mathcal{Z}$ of $X$ such that $\{ \St (y,\mathcal{Z}) \mid y \in F \}$ refines $\mathcal{V}$. We argue that $\mathcal{U} \vee \mathcal{Z}$ refines $\mathcal{W}$. To this end, let $U \in \mathcal{U}$ and $Z \in \mathcal{Z}$. If $Z \cap F \ne \emptyset$, then there exists some $V \in \mathcal{V}$ such that $Z \subseteq V$ and hence $U \cap Z \subseteq V$. Otherwise, $U \cap Z = (U \cap Z)\setminus F \subseteq U\setminus F \in \mathcal{W}$. Since $\mathcal{U} \vee \mathcal{Z}$ is a uniform covering of $X$, this implies that $\mathcal{W}$ is a uniform covering of $X$. \end{proof}

Furthermore, we will be concerned with the following combinatorial quantity for finite uniform coverings of uniform spaces.

\begin{definition}\label{definition:uniform.complexity} Let $X$ be a uniform space. If $\mathcal{U}$ is a member of $\mathcal{N}(X)$, then the \emph{uniform complexity} of $\mathcal{U}$ is defined to be the quantity $N(\mathcal{U}) \defeq \inf \{ |\mathcal{V}| \mid \mathcal{V} \in \mathcal{N}(X), \, \mathcal{U} \preceq \mathcal{V} \}$. \end{definition}

\begin{lem}\label{lemma:point.projection.map} Let $X$ be a uniform space, let $\mathcal{U},\mathcal{V} \in \mathcal{N}(X)$ such that $N(\mathcal{U}) = |\mathcal{U}|$ and $\mathcal{U} \preceq^{\ast} \mathcal{V}$. Then there exists a (necessarily injective) map $\pi \colon \mathcal{U} \to X$ such that $\St (\pi(U),\mathcal{V}) \subseteq U$ and $\mathcal{U}\setminus \{ U \} \npreceq \{ \St (\pi(U),\mathcal{V}) \}$ for all $U \in \mathcal{U}$. \end{lem}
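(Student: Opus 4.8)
The plan is to recast the two requirements on $\pi(U)$ as the single condition that $\pi(U)$ lie in a certain ``good set'', and then to prove that this set is nonempty by playing the minimality hypothesis $N(\mathcal{U}) = |\mathcal{U}|$ against a deletion argument. For each $U \in \mathcal{U}$ put
\[
G(U) \defeq \{ x \in X \mid \St(x,\mathcal{V}) \subseteq U \text{ and } \St(x,\mathcal{V}) \not\subseteq U' \text{ for all } U' \in \mathcal{U}\setminus\{U\} \}.
\]
Unwinding the definition of $\preceq$, the condition $\mathcal{U}\setminus\{U\} \npreceq \{\St(\pi(U),\mathcal{V})\}$ is exactly the statement that $\St(\pi(U),\mathcal{V})$ is contained in no member of $\mathcal{U}\setminus\{U\}$, so requirements (1) and (2) together are precisely $\pi(U) \in G(U)$. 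Hence it suffices to prove $G(U) \neq \emptyset$ for every $U \in \mathcal{U}$ and then to pick any $\pi(U) \in G(U)$. Injectivity is then automatic: if $\pi(U_{1}) = \pi(U_{2}) = x$ with $U_{1} \neq U_{2}$, then $\St(x,\mathcal{V}) \subseteq U_{2}$, contradicting the defining property of $x \in G(U_{1})$.

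I would prove $G(U_{0}) \neq \emptyset$ by contradiction, so suppose $G(U_{0}) = \emptyset$ for some fixed $U_{0}$. First note that $\mathcal{U} \preceq^{\ast} \mathcal{V}$ guarantees that the $\mathcal{V}$-star of every point sits inside a member of $\mathcal{U}$: if $x \in V \in \mathcal{V}$, then $\St(x,\mathcal{V}) \subseteq \St(V,\mathcal{V}) \subseteq U$ for some $U \in \mathcal{U}$. The hypothesis $G(U_{0}) = \emptyset$ says that no point has its $\mathcal{V}$-star trapped inside $U_{0}$ alone; consequently for each $x \in X$ we may choose a ``home'' $g(x) \in \mathcal{U}\setminus\{U_{0}\}$ with $\St(x,\mathcal{V}) \subseteq g(x)$ (if $\St(x,\mathcal{V}) \subseteq U_{0}$, then by $G(U_{0}) = \emptyset$ it also lies in some $U' \neq U_{0}$; otherwise any home found above is already distinct from $U_{0}$). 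Using this assignment I would define, for each $U' \in \mathcal{U}\setminus\{U_{0}\}$,
\[
W_{U'} \defeq \bigcup \{ \St(x,\mathcal{V}) \mid x \in X, \ g(x) = U' \},
\]
and set $\mathcal{W} \defeq \{ W_{U'} \mid U' \in \mathcal{U}\setminus\{U_{0}\} \}$.

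It remains to check that $\mathcal{W}$ is a uniform covering of $X$ that refines $\mathcal{U}$ and has fewer than $|\mathcal{U}|$ members. It refines $\mathcal{U}$ because $W_{U'} \subseteq U'$, and it covers $X$ because $y \in \St(y,\mathcal{V}) \subseteq W_{g(y)}$ for every $y \in X$; clearly $|\mathcal{W}| \leq |\mathcal{U}| - 1$. The decisive step is uniformity, which I would obtain by showing that $\mathcal{V}$ refines $\mathcal{W}$, i.e.\ $\mathcal{W} \preceq \mathcal{V}$: given $V \in \mathcal{V}$, pick any $x \in V$; then $V \subseteq \St(x,\mathcal{V}) \subseteq W_{g(x)}$. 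Since $\mathcal{V}$ is a uniform covering and $\mathcal{W}$ is a covering with $\mathcal{W} \preceq \mathcal{V}$, axiom~(1) of a uniformity upgrades $\mathcal{W}$ to a uniform covering. Thus $\mathcal{W}$ is a uniform covering refining $\mathcal{U}$ with $|\mathcal{W}| < |\mathcal{U}|$, whence $N(\mathcal{U}) < |\mathcal{U}|$, contradicting the hypothesis. Therefore $G(U_{0}) \neq \emptyset$, and since $U_{0}$ was arbitrary the construction of $\pi$ goes through.

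I expect the uniformity verification in the last paragraph to be the genuine obstacle, and the whole design of the argument is arranged to clear it. The naive move---delete $U_{0}$ from $\mathcal{U}$ (or from the interiors $\{x \mid \St(x,\mathcal{V}) \subseteq U\}$) and invoke minimality---fails, because a subfamily of a uniform covering that still covers $X$ need not itself be uniform. The remedy is to rebuild the reduced covering out of the point-stars $\St(x,\mathcal{V})$ assigned away from $U_{0}$: this is exactly what forces $\mathcal{V}$ to refine $\mathcal{W}$, and hence forces uniformity. The three hypotheses enter at three distinct points: $\mathcal{U} \preceq^{\ast} \mathcal{V}$ ensures that homes $g(x)$ exist at all, $G(U_{0}) = \emptyset$ ensures they can avoid $U_{0}$, and $N(\mathcal{U}) = |\mathcal{U}|$ supplies the final contradiction.
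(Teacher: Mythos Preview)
Your proof is correct and follows essentially the same strategy as the paper's: argue by contradiction for a fixed $U_{0}$, and produce a finite uniform covering refining $\mathcal{U}$ with strictly fewer members by showing that $\mathcal{V}$ refines it. The only real difference is that you insert an unnecessary intermediate object. Your own computation $V \subseteq \St(x,\mathcal{V}) \subseteq g(x) \in \mathcal{U}\setminus\{U_{0}\}$ already shows that $\mathcal{V}$ refines $\mathcal{U}\setminus\{U_{0}\}$ directly; the covering $\mathcal{W}$ sits between them and adds nothing. This is exactly the ``naive move'' you dismissed: the paper simply proves that $\mathcal{V}$ refines $\mathcal{U}\setminus\{U\}$ (splitting on whether $V$ meets $U$), whence $\mathcal{U}\setminus\{U\}$ is uniform by axiom~(1) and the contradiction follows. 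So your concern that deleting $U_{0}$ fails is unfounded---it works for precisely the reason your $\mathcal{W}$ works. Two minor points: your refinement check ``pick any $x\in V$'' tacitly assumes $V\neq\emptyset$, which should be handled separately (trivially, once $|\mathcal{U}|\geq 2$); and the paper chooses $\pi(U)\in U$ first and then deduces $\St(\pi(U),\mathcal{V})\subseteq U$ from the second condition together with $\mathcal{U}\preceq^{\ast}\mathcal{V}$, whereas you bundle both conditions into $G(U)$ from the start---either packaging is fine.
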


\begin{proof} As $X$ is non-empty, it is true that $\mathcal{U}$ refines $\mathcal{U}\setminus \{ \emptyset \}$, which implies that $\mathcal{U}\setminus \{ \emptyset \}$ is a uniform covering of $X$. Since $\mathcal{U}\setminus \{ \emptyset \}$ refines $\mathcal{U}$ and $N(\mathcal{U}) = \vert \mathcal{U} \vert$, we conclude that $\mathcal{U}$ does not contain the empty set. We are going to prove the following: \begin{displaymath}
	\forall U \in \mathcal{U} \, \exists x \in U \colon \, \mathcal{U}\setminus \{ U \} \npreceq \{ \St (x,\mathcal{V}) \} .
\end{displaymath} On the contrary, suppose that there exists some $U \in \mathcal{U}$ such that $\mathcal{U}\setminus \{ U \} \preceq \{ \St (x,\mathcal{V}) \}$ for all $x \in U$. We argue that $\mathcal{V}$ refines $\mathcal{U} \setminus \{ U \}$. To this end, let $V \in \mathcal{V}$. If $V \cap U = \emptyset$ and $V \ne \emptyset$, then clearly $\mathcal{U} \setminus \{ U \} \preceq \{ V \}$ because $\mathcal{U} \preceq \mathcal{V}$. Otherwise, $\mathcal{U}\setminus \{ U \} \preceq \{ \St(x,\mathcal{V}) \mid x \in U \} \preceq \{ V \}$. Therefore, $\mathcal{V}$ refines $\mathcal{U}\setminus \{ U \}$, which implies that $\mathcal{U}\setminus \{ U \}$ is a uniform covering of $X$. Since $\mathcal{U} \preceq \mathcal{U}\setminus \{ U \}$ and $|\mathcal{U}\setminus \{ U \}| < |\mathcal{U}|$, this contradicts our hypothesis. Consequently, there exists some $x \in U$ such that $\mathcal{U}\setminus \{ U \} \npreceq \{ \St (x,\mathcal{V}) \}$. This proves the claim. Hence, there exists a map $\pi \colon \mathcal{U} \to X$ such that $\pi (U) \in U$ and $\mathcal{U}\setminus \{ U \} \npreceq \{ \St (\pi(U),\mathcal{V}) \}$ for all $U \in \mathcal{U}$. Since $\mathcal{U} \preceq^{\ast} \mathcal{V}$, it follows that $\St (\pi (U),\mathcal{V}) \subseteq U$ for all $U \in \mathcal{U}$. Accordingly, $\pi$ is injective. \end{proof}

Finally in this section, we want to briefly discuss three important classes of uniform spaces. The first one to be mentioned is the class of pseudo-metric spaces. By a \emph{pseudo-metric space}, we mean a non-empty set $X$ equipped with a map $d \colon X \times X \to [0,\infty)$ satisfying the following conditions: \begin{enumerate}
	\item[(1)]	$\forall x \in X \colon \, d(x,x) = 0$,
	\item[(2)]	$\forall x,y \in X \colon \, d(x,y) = d(y,x)$,
	\item[(3)]	$\forall x,y,z \in X \colon \, d(x,z) \leq d(x,y) + d(y,z)$.
\end{enumerate}  If $X$ is a pseudo-metric space, then we define $B(x,r) \defeq \{ y \in X \mid d(x,y) < r \}$ whenever $x \in X$ and $r \in[0,\infty)$. Any pseudo-metric space constitutes a uniform space: if $X$ is a pseudo-metric space, then we may consider $X$ as a uniform space by equipping it with the \emph{induced uniformity}, i.e., $\{ \mathcal{U} \in \mathcal{P}(\mathcal{P}(X)) \mid \exists r \in (0,\infty) \colon \, \mathcal{U} \preceq \{ B(x,r) \mid x \in X \} \}$. This particularly applies to the space of real numbers. Concerning a uniform space $X$, we denote by $UC_{b}(X)$ the set of all bounded uniformly continuous functions from $X$ to $\mathbb{R}$.

Another example of uniform spaces is provided by the class of compact Hausdorff spaces.

\begin{prop}[\cite{ModernGeneralTopology}]\label{proposition:compact.hausdorff.spaces} If $X$ is a compact Hausdorff space, then \begin{displaymath}
	\{ \mathcal{U} \in \mathcal{P}(\mathcal{P}(X)) \mid \exists \mathcal{V} \in \mathcal{C}(X) \textit{ open}\colon \, \mathcal{U} \preceq \mathcal{V} \}
\end{displaymath} is the unique uniformity on $X$ inducing the topology of $X$. \end{prop}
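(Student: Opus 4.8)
Write $\mathcal{D}$ for the family displayed in the statement and $\tau$ for the given topology of $X$. The plan is to prove three things in turn: that $\mathcal{D}$ is a uniformity on $X$, that the topology it induces coincides with $\tau$, and that every uniformity inducing $\tau$ equals $\mathcal{D}$.

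First I would verify the uniformity axioms. Since an open covering refining $\mathcal{U}$ forces $\mathcal{U}$ itself to cover $X$, we have $\mathcal{D} \subseteq \mathcal{C}(X)$, and $\{ X \} \in \mathcal{D}$ shows $\mathcal{D} \ne \emptyset$. Condition~(1) is immediate from transitivity of $\preceq$: if $\mathcal{U} \in \mathcal{D}$ is refined by an open covering $\mathcal{W}$ (so $\mathcal{U} \preceq \mathcal{W}$) and $\mathcal{V} \preceq \mathcal{U}$, then $\mathcal{V} \preceq \mathcal{W}$, whence $\mathcal{V} \in \mathcal{D}$. For condition~(2), given $\mathcal{U}_{1},\mathcal{U}_{2} \in \mathcal{D}$ with open refinements $\mathcal{W}_{1},\mathcal{W}_{2}$, the open covering $\mathcal{W}_{1} \vee \mathcal{W}_{2}$ refines both $\mathcal{U}_{1}$ and $\mathcal{U}_{2}$; by compactness it admits a finite subcover $\mathcal{O}$, which still refines each $\mathcal{U}_{k}$. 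The crux is then to produce an open \emph{star}-refinement $\mathcal{V}$ of the finite open covering $\mathcal{O}$: since $\mathcal{V}$ is an open covering we obtain $\mathcal{V} \in \mathcal{D}$, and $\mathcal{U}_{k} \preceq \mathcal{O} \preceq \mathcal{V}^{\ast}$ yields $\mathcal{U}_{k} \preceq^{\ast} \mathcal{V}$ for $k = 1,2$, as required.

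The existence of such a star-refinement, i.e. the full normality of compact Hausdorff spaces, is the main obstacle. I would prove it for a finite open covering $\{ O_{1},\dots,O_{n} \}$ as follows. Compact Hausdorff spaces are normal, so the shrinking lemma provides open sets $W_{i}$ with $\overline{W_{i}} \subseteq O_{i}$ and $\bigcup_{i} W_{i} = X$. For each $x \in X$ set
\[
 N_{x} \defeq \bigcap \{ O_{i} \mid x \in \overline{W_{i}} \} \cap \bigcap \{ X \setminus \overline{W_{i}} \mid x \notin \overline{W_{i}} \},
\]
a finite intersection of open sets, hence an open neighbourhood of $x$. One checks that $\mathcal{N} \defeq \{ N_{x} \mid x \in X \}$ is a point-star refinement of $\{ O_{1},\dots,O_{n} \}$, i.e. $\St(x,\mathcal{N}) \subseteq O_{i}$ for a suitable $i$: choosing $i$ with $x \in W_{i}$, any $N_{y}$ containing $x$ forces $y \in \overline{W_{i}}$ and hence $N_{y} \subseteq O_{i}$. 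Applying the same construction to a finite subcover of $\mathcal{N}$ (still a point-star refinement of $\{ O_{1},\dots,O_{n} \}$, as stars only shrink), and invoking the standard fact that a point-star refinement of a point-star refinement is a star-refinement, produces the desired open star-refinement.

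It remains to treat the topology and uniqueness. That the topology induced by $\mathcal{D}$ is $\tau$ is routine: if $S$ is $\mathcal{D}$-open and $x \in S$, then an open refinement $\mathcal{V}$ of a covering $\mathcal{U} \in \mathcal{D}$ with $\St(x,\mathcal{U}) \subseteq S$ yields the open set $\St(x,\mathcal{V})$ with $x \in \St(x,\mathcal{V}) \subseteq \St(x,\mathcal{U}) \subseteq S$, so $S \in \tau$; conversely, for $S \in \tau$ and $x \in S$ the open covering $\{ S, X \setminus \{ x \} \}$ (open because $X$ is Hausdorff, hence $T_{1}$) lies in $\mathcal{D}$ and satisfies $\St(x, \{ S, X \setminus \{ x \} \}) = S$, so $S$ is $\mathcal{D}$-open. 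For uniqueness, let $\mathcal{E}$ be a uniformity inducing $\tau$. To see $\mathcal{E} \subseteq \mathcal{D}$, take $\mathcal{U} \in \mathcal{E}$ and a star-refinement $\mathcal{V} \in \mathcal{E}$, and choose for each $V \in \mathcal{V}$ some $U(V) \in \mathcal{U}$ with $\St(V,\mathcal{V}) \subseteq U(V)$; the open sets $\operatorname{int} U(V)$ cover $X$ (each $x$ lies in $\operatorname{int} U(V)$ for $V \ni x$, since $\St(x,\mathcal{V}) \subseteq \St(V,\mathcal{V}) \subseteq U(V)$ is a $\tau$-neighbourhood of $x$) and refine $\mathcal{U}$, whence $\mathcal{U} \in \mathcal{D}$. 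For $\mathcal{D} \subseteq \mathcal{E}$, by condition~(1) for $\mathcal{E}$ it suffices to show every open covering lies in $\mathcal{E}$; reducing to a finite open covering by compactness, this is a Lebesgue-number argument: using that $\mathcal{E}$ induces $\tau$, cover $X$ by uniform stars each contained in a member of the covering, pass to a finite subfamily, and take a common refinement in $\mathcal{E}$ to obtain a uniform refinement. The two technical cores are thus the full normality invoked for condition~(2) and this Lebesgue-covering step for uniqueness.
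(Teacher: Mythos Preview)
The paper does not give its own proof of this proposition; it is stated with a citation to \cite{ModernGeneralTopology} and used as a black box. So there is nothing to compare your argument against on the paper's side.

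That said, your proof is correct and follows the standard route one finds in general-topology texts. The three ingredients you isolate are exactly the right ones: full normality of compact Hausdorff spaces (for axiom~(2)), the easy identification of the induced topology, and the Lebesgue-covering argument for uniqueness. Your barycentric-refinement construction via the shrinking lemma and the sets $N_{x}$ is clean and works as stated; the iteration ``point-star refinement of a point-star refinement is a star-refinement'' is the usual trick. In the $\mathcal{E}\subseteq\mathcal{D}$ step you implicitly use that $\St(x,\mathcal{V})$ is a $\tau$-neighbourhood of $x$ for every $\mathcal{V}\in\mathcal{E}$, which is immediate from the definition of the induced topology (take $\mathcal{U}=\mathcal{V}$ to see $x\in\operatorname{int}\St(x,\mathcal{V})$); you might make that one line explicit. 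The Lebesgue step for $\mathcal{D}\subseteq\mathcal{E}$ is sketched rather tersely---to make it airtight, first pass to star-refinements $\mathcal{W}_{x}$ of the $\mathcal{V}_{x}$ before extracting the finite subfamily, so that a common refinement in $\mathcal{E}$ genuinely refines the open covering---but the idea is sound and standard.
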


As the subsequent lemma reveals, the construction from Proposition~\ref{proposition:compact.hausdorff.spaces} is compatible with the concept of uniform continuity.

\begin{lem}[\cite{Isbell}]\label{lemma:continuity.implies.uniform.continuity} Let $X$ be a compact Hausdorff space and let $Y$ be a uniform space. If $f \colon X \to Y$ is continuous, then $f$ is uniformly continuous. \end{lem}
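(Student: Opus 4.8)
The plan is to reduce everything to the characterisation of the uniformity of $X$ furnished by Proposition~\ref{proposition:compact.hausdorff.spaces}: a covering of $X$ is uniform precisely when it is refined by some open covering of $X$. Accordingly, to verify uniform continuity of $f$ I would fix an arbitrary uniform covering $\mathcal{U}$ of $Y$ and aim to produce an open covering $\mathcal{V}$ of $X$ with $f^{-1}(\mathcal{U}) \preceq \mathcal{V}$, where $f^{-1}(\mathcal{U}) = \{ f^{-1}(U) \mid U \in \mathcal{U} \}$. Once such a $\mathcal{V}$ is in hand, Proposition~\ref{proposition:compact.hausdorff.spaces} instantly certifies that $f^{-1}(\mathcal{U})$ is a uniform covering of $X$, which is exactly what is required. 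Compactness (and Hausdorffness) of $X$ thus enters only through this proposition, which identifies the uniformity of $X$ with the coverings refined by open coverings.

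The intermediate claim I would isolate is purely about $Y$: \emph{every uniform covering of $Y$ is refined by an open covering of $Y$}. Given $\mathcal{U}$, I would first invoke axiom~(2) of a uniformity to pick a uniform covering $\mathcal{W}$ of $Y$ with $\mathcal{U} \preceq^{\ast} \mathcal{W}$. The star-refinement property then guarantees that each star $\St(y,\mathcal{W})$ lies inside a single member of $\mathcal{U}$: choosing any $W_{0} \in \mathcal{W}$ with $y \in W_{0}$, one has $\St(y,\mathcal{W}) \subseteq \St(W_{0},\mathcal{W}) \in \mathcal{W}^{\ast}$, and $\mathcal{W}^{\ast}$ refines $\mathcal{U}$ by the definition of $\preceq^{\ast}$.

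The main obstacle—and the step warranting the most care—is showing that each $\St(y,\mathcal{W})$ is actually a \emph{neighbourhood} of $y$; without this the interiors of the stars need not cover $Y$, and the argument collapses. I would handle it by a second star-refinement: choose a uniform covering $\mathcal{W}'$ of $Y$ with $\mathcal{W} \preceq^{\ast} \mathcal{W}'$ and check that $\St(y,\mathcal{W}') \subseteq \operatorname{int}(\St(y,\mathcal{W}))$. Concretely, for $z \in \St(y,\mathcal{W}')$ there is $W' \in \mathcal{W}'$ with $y,z \in W'$; any $W'' \in \mathcal{W}'$ meeting $z$ satisfies $W'' \cap W' \ne \emptyset$, so $\St(z,\mathcal{W}') \subseteq \St(W',\mathcal{W}')$, and the latter lies in some $W \in \mathcal{W}$ with $y \in W' \subseteq W$; hence $\St(z,\mathcal{W}') \subseteq W \subseteq \St(y,\mathcal{W})$. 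By the very definition of the topology of a uniform space this places $z$ in the interior of $\St(y,\mathcal{W})$, and since $y \in \St(y,\mathcal{W}')$ we conclude $y \in \operatorname{int}(\St(y,\mathcal{W}))$.

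Assembling these observations, the family $\mathcal{O} \defeq \{ \operatorname{int}(\St(y,\mathcal{W})) \mid y \in Y \}$ is an open covering of $Y$ with $\mathcal{U} \preceq \mathcal{O}$, because each member is contained in $\St(y,\mathcal{W})$ and hence in some $U \in \mathcal{U}$. Finally I would pull $\mathcal{O}$ back along $f$: by continuity $\mathcal{V} \defeq f^{-1}(\mathcal{O})$ consists of open subsets of $X$ and covers $X$ (as $\mathcal{O}$ covers $f(X)$), while $\mathcal{U} \preceq \mathcal{O}$ forces $f^{-1}(\mathcal{U}) \preceq f^{-1}(\mathcal{O}) = \mathcal{V}$. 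Thus $f^{-1}(\mathcal{U})$ is refined by an open covering of $X$, and Proposition~\ref{proposition:compact.hausdorff.spaces} yields that it is a uniform covering of $X$, completing the proof that $f$ is uniformly continuous.
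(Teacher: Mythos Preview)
The paper does not supply its own proof of this lemma; it is quoted from \cite{Isbell} without argument. Your proof is correct and self-contained: you first establish the standard fact that every uniform covering of $Y$ is refined by an open covering (via two successive star-refinements, the first to get each $\St(y,\mathcal{W})$ inside a member of $\mathcal{U}$, the second to force $\St(y,\mathcal{W}')$ into the interior of $\St(y,\mathcal{W})$), and then pull back along the continuous map $f$ and invoke Proposition~\ref{proposition:compact.hausdorff.spaces}. All steps are sound; in particular the verification that $\St(z,\mathcal{W}') \subseteq \St(y,\mathcal{W})$ is carried out carefully and correctly. Since the paper gives no proof to compare against, there is nothing further to add.
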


The last class of examples to be discussed is the class of topological groups. To this end, let $G$ be an arbitrary topological group. We denote by $\mathcal{V}(G)$ the filter of all neighborhoods of the neutral element in $G$. We define $G_{r}$ to be the uniform space obtained by endowing $G$ with the \emph{right uniformity}, i.e., $\{ \mathcal{U} \in \mathcal{P}(\mathcal{P}(G)) \mid \exists V \in \mathcal{V}(G) \colon \, \mathcal{U} \preceq \{ Vx \mid x \in G \} \}$. It is easy to see that the topology generated by the right uniformity is precisely the original topology of $G$. Besides, let us note that an injective homomorphism is given by the map $\lambda_{G} \colon G \to \Aut (G_{r})$ where $\lambda_{G}(g) (x) \defeq gx$ for all $g,x \in G$. For more details concerning uniform structures on topological groups, we refer to \cite{roelcke}.

\section{Amenability}\label{section:amenability}

In this section we want to recall the very basics concerning means on function spaces in general and invariant means on dynamical systems in particular. For this purpose, we follow the presentation in \cite{AnalysisOnSemigroups}.

Let $X$ be a set. The set of all finite subsets of $X$ shall be denoted by $\mathcal{F}(X)$. Additionally, let $\mathcal{F}_{+}(X) \defeq \mathcal{F}(X)\setminus \{ \emptyset \}$. Moreover, we denote by $B(X)$ the set of all bounded real-valued functions on $X$. Let $H$ be a linear subspace of $B(X)$. A \emph{mean} on $H$ is a linear map $m \colon H \to \mathbb{R}$ such that $\inf \{ f(x) \mid x \in X \} \leq m(f) \leq \sup \{ f(x) \mid x \in X \}$ for all $f \in H$.

Next we shall briefly introduce the concept of amenability for general dynamical systems. This will particularly capture the case of topological groups.

\begin{definition} Let $(X,G)$ be a \emph{dynamical system}, i.e., a pair consisting of a uniform space $X$ and a subgroup $G$ of $\Aut (X)$. An \emph{invariant mean} on $(X,G)$ is a mean $m \colon UC_{b}(X) \to \mathbb{R}$ such that $m(f) = m(f \circ g)$ for all $f \in UC_{b}(X)$ and $g \in G$. We say that $(X,G)$ is \emph{amenable} if there exists an invariant mean on $(X,G)$. \end{definition}

As pointed out at the end of Section~\ref{section:uniform.spaces}, any topological group may be considered as a uniform space, wherefore the previous definition particularly applies to topological groups.

\begin{definition}\label{definition:amenable.topological.group} A topological group $G$ is called \emph{amenable} if the dynamical system $(G_{r},\lambda_{G}(G))$ is amenable, i.e., there exists a mean $m \colon UC_{b}(G_{r}) \to \mathbb{R}$ such that $m(f) = m(f \circ \lambda_{G}(g))$ for all $f \in UC_{b}(G_{r})$ and $g \in G$. \end{definition}

For a more elaborate study of the concept of amenable for topological groups, the reader is referred to \cite{brownc,runde,paterson}.

\section{Matchings in bipartite graphs}\label{section:matchings}

In this section we shall provide some auxiliary results concerning matchings in bipartite graphs. Particular attention shall be paid to bipartite graphs arising from coverings. However, let us start with the general concepts and results.

\begin{definition}  By a \emph{bipartite graph}, we mean a triple $\mathcal{B} = (X,Y,R)$ consisting of two finite sets $X$ and $Y$ and a relation $R \subseteq X \times Y$. Let $\mathcal{B} = (X,Y,R)$ be a bipartite graph. If $S \subseteq X$, then we define $N_{\mathcal{B}}(S) \defeq \{ y \in Y \mid \exists x \in S \colon (x,y) \in R \}$. A \emph{matching} in $\mathcal{B}$ is an injective map $\phi \colon D \to Y$ such that $D \subseteq X$ and $(x,\phi(x)) \in R$ for all $x \in D$. A matching $\phi$ in $\mathcal{B}$ is said to be \emph{perfect} if $\dom (\phi) = X$. Furthermore, we call $\nu (\mathcal{B}) \defeq \sup \{ |\dom \phi | \mid \phi \textnormal{ matching in } \mathcal{B} \}$ the \emph{matching number} of $\mathcal{B}$. \end{definition}

\begin{thm}[\cite{Hall35}, \cite{Ore}]\label{theorem:hall} If $\mathcal{B} = (X,Y,R)$ is a bipartite graph, then \begin{displaymath}
	\nu (\mathcal{B}) = |X| - \sup \{ |S| - |N_{\mathcal{B}}(S)| \mid S \subseteq X \} .
\end{displaymath} \end{thm}

\begin{cor}\label{corollary:hall} If $\mathcal{B} = (X,Y,R)$ is a bipartite graph, then the following are equivalent: \begin{enumerate}
	\item[(1)]	$\mathcal{B}$ admits a perfect matching.
	\item[(2)]	$|S| \leq |N_{\mathcal{B}}(S)|$ for every subset $S \subseteq X$.
\end{enumerate} \end{cor}

In what follows, we shall have a closer look at bipartite graphs arising from coverings. However, let us first address some rather general bits of notation. If $X$ is a set, then we define $\id_{X} \colon X \to X, \, x \mapsto x$, and we denote by $\Sym (X)$ the group of all bijective maps from $X$ to $X$. For maps $f \colon X \to Y_{0}$ and $g \colon Y_{1} \to Z$, we define $g \circ f \colon f^{-1}(Y_{1}) \to Z, \, x \mapsto g(f(x))$. What is more, concerning a single map $f \colon X \to Y$, we define $f^{0} \defeq \id_{X} \colon X \to X$ and $f^{n+1} \defeq f \circ f^{n} \colon (f^{n})^{-1}(X) \to Y$ for every $n \in \mathbb{N}$. For later use, let us furthermore mention the following observation.

\begin{lem}\label{lemma:auxiliary.matchings} Let $X$ and $Y$ be sets and let $f \colon X \to Y$ be injective. For all $m,n \in \mathbb{N}\setminus \{ 0 \}$, $x \in \dom (f^{m})$ and $y \in \dom (f^{n})$, the following implication holds: \begin{displaymath}
	(\{ x,y\} \cap Y = \emptyset \wedge f^{m}(x) = f^{n}(y)) \ \Longrightarrow \ (m=n \wedge x=y) .
\end{displaymath} \end{lem}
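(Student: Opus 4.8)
The plan is to reduce the equation $f^{m}(x) = f^{n}(y)$ by repeatedly cancelling the outermost application of $f$, exploiting the injectivity of $f$, and then to invoke the fact that the image of $f$ is contained in $Y$ in order to rule out unequal exponents. First I would note that the assertion is symmetric in the pairs $(m,x)$ and $(n,y)$: both the hypothesis $\{ x,y \} \cap Y = \emptyset$ and the conclusion $m=n \wedge x=y$ are invariant under swapping these pairs. Hence I may assume without loss of generality that $m \geq n$. I would also record the elementary fact that $\dom (f^{k}) = (f^{k-1})^{-1}(X) \subseteq \dom (f^{k-1}) \subseteq \dots \subseteq X$ for every $k \geq 1$; in particular both $x$ and $y$ lie in $X$, so that the expressions $f^{0}(x) = x$ and $f^{0}(y) = y$ are meaningful.

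The core of the argument is the cancellation step. For indices $k,l \geq 1$ and points in the respective domains, whenever $f^{k}(x) = f^{l}(y)$, membership in $\dom (f^{k})$ guarantees $f^{k-1}(x) \in X$ and likewise $f^{l-1}(y) \in X$, so that I may legitimately rewrite the equation as $f(f^{k-1}(x)) = f(f^{l-1}(y))$. Injectivity of $f$ then yields $f^{k-1}(x) = f^{l-1}(y)$. Applying this to the hypothesis $f^{m}(x) = f^{n}(y)$ a total of $n$ times — the number of steps being bounded by $\min \{ m,n \} = n$, which is precisely what keeps both exponents non-negative throughout the process — leads to the identity $f^{m-n}(x) = f^{0}(y) = y$.

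It then remains only to conclude. If $m = n$, the derived identity reads $f^{0}(x) = x = y$, which delivers both halves of the conclusion simultaneously. If instead $m > n$, then $m-n \geq 1$ forces $y = f^{m-n}(x) = f(f^{m-n-1}(x))$ to lie in the image $f(X) \subseteq Y$, contradicting the hypothesis $y \notin Y$; this case is therefore impossible. The only point demanding care is bookkeeping of the domains during the successive cancellations, so as to ensure that each intermediate iterate is defined and lands in $X$ before $f$ is applied again; this is exactly what the nested inclusions $\dom (f^{k}) \subseteq \dom (f^{k-1}) \subseteq \dots \subseteq X$ guarantee, so I do not expect any genuine obstacle beyond this routine verification.
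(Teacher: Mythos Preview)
Your proof is correct and follows essentially the same route as the paper's: both arguments use the injectivity of $f$ to cancel iterates down to an equation of the form $f^{\,|m-n|}(\cdot) = (\cdot)$, and then observe that a nonzero iterate must land in $Y$, contradicting the hypothesis $\{x,y\}\cap Y = \emptyset$. The only difference is presentational---the paper treats the case $m<n$ first and argues by symmetry, whereas you assume $m\geq n$ at the outset and are more explicit about the domain bookkeeping.
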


\begin{proof} Let $m,n \in \mathbb{N}\setminus \{ 0 \}$, $x \in \dom (f^{m})$ and $y \in \dom (f^{n})$ such that $\{ x,y\} \cap Y = \emptyset$ and $f^{m}(x) = f^{n}(y)$. Suppose that $m < n$. Then $k \defeq n-m \geq 1$ and so $f^{k}(y) \in Y$. On the other hand, $f^{k}(y) = x \notin Y$, which clearly constitutes a contradiction. In a symmetric manner, the assumption that $n < m$ yields a contradiction as well. Hence, $m = n$ and thus $x = y$ as $f$ is injective. \end{proof}

Now we are going to deal with bipartite graphs induced by coverings.

\begin{definition} Let $X$ be a set, $\mathcal{U} \subseteq \mathcal{P}(X)$ and $E,F \in \mathcal{F}(X)$. We consider the bipartite graph $\mathcal{B} (E,F,\mathcal{U}) \defeq (E,F,R(E,F,\mathcal{U}))$ where \begin{displaymath}
	R(E,F,\mathcal{U}) \defeq \{ (x,y) \in E \times F \mid y \in \St (x,\mathcal{U}) \} = \{ (x,y) \in E \times F \mid \exists U \in \mathcal{U} \colon \, \{ x,y \} \subseteq U \}.
\end{displaymath} Furthermore, we define $\mu (E,F,\mathcal{U}) \defeq \nu (\mathcal{B}(E,F,\mathcal{U}))$. \end{definition}

The subsequent two lemmata will prove useful in Section~\ref{section:asymptotic.uniform.complexity}.

\begin{lem}\label{lemma:equivalent.matchings} Let $X$ be a set, let $\mathcal{U}$ be a covering of $X$, let $E \in \mathcal{F}(\Sym (X))$ and $F_{0},F_{1} \in \mathcal{F}(X)$. If $\Phi \colon F_{1} \to F_{0}$ is a bijection such that $x \in \bigcap \{ U \in \mathcal{U} \wedge \bigwedge\nolimits_{g \in E}g^{-1}(\mathcal{U}) \mid \Phi(x) \in U \}$ for all $x \in F_{1}$, then $\mu (F_{1},g(F_{1}),\mathcal{U}) \geq \mu (F_{0},g(F_{0}),\mathcal{U})$ for every $g \in E$. \end{lem}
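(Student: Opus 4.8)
The plan is to fix an arbitrary $g \in E$, take a maximum matching $\phi$ in $\mathcal{B}(F_{0},g(F_{0}),\mathcal{U})$, and transport it along the bijection $\Phi$ to a matching of the same cardinality in $\mathcal{B}(F_{1},g(F_{1}),\mathcal{U})$; since $g$ is arbitrary, this yields the claimed inequality $\mu(F_{1},g(F_{1}),\mathcal{U}) \geq \mu(F_{0},g(F_{0}),\mathcal{U})$. First I would unravel the hypothesis. Writing $\mathcal{W} \defeq \mathcal{U} \wedge \bigwedge\nolimits_{h \in E} h^{-1}(\mathcal{U}) = \mathcal{U} \cup \bigcup\nolimits_{h \in E} h^{-1}(\mathcal{U})$, the assumption on $\Phi$ says precisely that for every $x \in F_{1}$ and every $W \in \mathcal{W}$ the implication $\Phi(x) \in W \Rightarrow x \in W$ holds. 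Two instances will be used: taking $W = U$ with $U \in \mathcal{U} \subseteq \mathcal{W}$ shows that $\Phi(x)$ lying in a member of $\mathcal{U}$ forces $x$ to lie in it; and taking $W = g^{-1}(U) \in g^{-1}(\mathcal{U}) \subseteq \mathcal{W}$ (legitimate since $g \in E$) shows that $g(\Phi(x)) \in U$ forces $g(x) \in U$.

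Next, for the fixed $g$, I would define the transported map. Let $\phi \colon D \to g(F_{0})$ be a matching with $D \subseteq F_{0}$ and $|D| = \mu(F_{0},g(F_{0}),\mathcal{U})$, and set $\psi \defeq g \circ \Phi^{-1} \circ g^{-1} \circ \phi \circ \Phi$, which is defined on $\Phi^{-1}(D) \subseteq F_{1}$ and takes values in $g(F_{1})$. To verify that $\psi$ is a matching in $\mathcal{B}(F_{1},g(F_{1}),\mathcal{U})$, fix $x \in \Phi^{-1}(D)$ and put $a \defeq \Phi(x) \in D$ and $c \defeq g^{-1}(\phi(a)) \in F_{0}$, so that $\psi(x) = g(\Phi^{-1}(c))$. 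By definition of $\phi$ there is $U \in \mathcal{U}$ with $a \in U$ and $\phi(a) = g(c) \in U$. Applying the first instance of the hypothesis to $x$ gives $x \in U$; applying the second instance to $\Phi^{-1}(c)$, whose image under $\Phi$ is $c \in g^{-1}(U)$, gives $g(\Phi^{-1}(c)) \in U$. Hence $\{ x, \psi(x) \} \subseteq U$, so $(x,\psi(x)) \in R(F_{1},g(F_{1}),\mathcal{U})$.

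Finally, $\psi$ is injective, being a composite of the injections $\Phi$, $\phi$, $g^{-1}$, $\Phi^{-1}$ and $g$, and its domain $\Phi^{-1}(D)$ has cardinality $|D|$. Thus $\psi$ witnesses $\mu(F_{1},g(F_{1}),\mathcal{U}) \geq |D| = \mu(F_{0},g(F_{0}),\mathcal{U})$, as required. The one step demanding genuine care is the edge verification: it is essential that $\mathcal{W}$ contains not merely $\mathcal{U}$ but also each $g^{-1}(\mathcal{U})$, because the two ends of an edge $(a,\phi(a))$ are governed by different members of $\mathcal{W}$ — the left end $a$ by a set in $\mathcal{U}$ and the right end $g(c)$ by its $g^{-1}$-preimage — and both conclusions are needed so that $x$ and $\psi(x)$ land inside one common $U \in \mathcal{U}$. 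Everything else (injectivity and the cardinality count) is routine.
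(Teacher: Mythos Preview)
Your proof is correct and is essentially identical to the paper's own argument: the transported matching $\psi = g \circ \Phi^{-1} \circ g^{-1} \circ \phi \circ \Phi$ on $\Phi^{-1}(D)$ is exactly the map $\psi_{1}(x) = g(\Phi^{-1}(g^{-1}(\psi_{0}(\Phi(x)))))$ constructed in the paper, and your two instances of the hypothesis (for $U \in \mathcal{U}$ and for $g^{-1}(U) \in g^{-1}(\mathcal{U})$) match the paper's edge verification step verbatim.
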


\begin{proof} Let $g \in E$. Assume $\psi_{0}$ to be a matching in $\mathcal{B}(F_{0},g(F_{0}),\mathcal{U})$ where $|D_{0}| = \mu (F_{0},g(F_{0}),\mathcal{U})$ for $D_{0} \defeq \dom (\psi_{0})$. Let $D_{1} \defeq \Phi^{-1}(D_{0})$ and \begin{displaymath}
	\psi_{1} \colon D_{1} \to g(F_{1}), \, x \mapsto g(\Phi^{-1}(g^{-1}(\psi_{0}(\Phi (x))))) .
\end{displaymath} Evidently, $\psi_{1}$ is injective. We show that $\psi_{1}$ is a matching in $\mathcal{B}(F_{1},g(F_{1}),\mathcal{U})$. To this end, let $x \in D_{1}$. Since $\Phi(x) \in D_{0}$, there exists $U \in \mathcal{U}$ such that $\{ \Phi(x), \psi_{0}(\Phi(x)) \} \subseteq U$. That is, $\Phi(x) \in U$ and $g^{-1}(\psi_{0}(\Phi(x))) \in g^{-1}(U)$. By hypothesis on $\Phi$, it follows that $x \in U$ and $\Phi^{-1}(g^{-1}(\psi_{0}(\Phi(x)))) \in g^{-1}(U)$. Thus, $\{ x,\psi_{1}(x) \} = \{ x, g(\Phi^{-1}(g^{-1}(\psi_{0}(\Phi(x))))) \} \subseteq U$. This proves our claim. Consequently, $\mu (F_{1},g(F_{1}),\mathcal{U}) \geq |D_{1}| = |D_{0}| = \mu (F_{0},g(F_{0}),\mathcal{U})$. \end{proof}

\begin{lem}\label{lemma:compatible.matchings} Let $X$ be a set and let $\mathcal{U}$ be a covering of $X$. If $E,F \in \mathcal{F}(X)$ and $n \in \mathbb{N}\setminus \{ 0 \}$, then there exists a matching $\phi$ in $\mathcal{B}(E\setminus F,F\setminus E,\mathcal{U}^{\ast, n-1})$ such that \begin{displaymath}
	|\dom (\phi)| \geq \mu (E,F,\mathcal{U}) - \left(1+\frac{1}{n}\right)|E \cap F|.
\end{displaymath} \end{lem}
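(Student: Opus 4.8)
The plan is to start from a maximum matching $\phi_{0}$ in $\mathcal{B}(E,F,\mathcal{U})$, so that $|\dom(\phi_{0})| = \mu(E,F,\mathcal{U})$ (the supremum defining $\nu$ is attained because $E$ and $F$ are finite), and to reroute $\phi_{0}$ into a matching $\psi$ of $\mathcal{B}(E\setminus F, F\setminus E, \mathcal{U}^{\ast,n-1})$ by following the chains of $\phi_{0}$ through the ``bad'' set $C \defeq E \cap F$. I regard $\phi_{0}$ as a partial injection on $E \cup F$ and consider its functional digraph, with an arc $x \to \phi_{0}(x)$ for each $x \in \dom(\phi_{0})$; since $\phi_{0}$ is an injection, this digraph has in- and out-degree at most one and hence decomposes into directed paths and cycles. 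Classifying every vertex by the partition $E \cup F = (E\setminus F) \sqcup (F\setminus E) \sqcup C$, the key structural observation is that along any path $v_{0} \to v_{1} \to \cdots \to v_{\ell}$ every interior vertex lies in $C$ (having both an incoming and an outgoing arc, it lies in the range $\subseteq F$ and in the domain $\subseteq E$), while $v_{0} \in E$ and $v_{\ell} \in F$; in particular $v_{0}$ can only be of type $E\setminus F$ or $C$, and $v_{\ell}$ only of type $F\setminus E$ or $C$. In a cycle all vertices lie in $C$.

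The metric input is an auxiliary claim relating arc-length to star-powers: whenever $x_{0},\dots,x_{k}$ satisfy $x_{i+1} \in \St(x_{i},\mathcal{U})$ for all $i$, then $x_{k} \in \St(x_{0},\mathcal{U}^{\ast,k-1})$. I would prove this by induction on $k$: the inductive hypothesis gives $W \in \mathcal{U}^{\ast,k-2}$ with $x_{0},x_{k-1} \in W$, the last edge satisfies $\{x_{k-1},x_{k}\} \subseteq U$ for some $U \in \mathcal{U}$, and since $\mathcal{U}^{\ast,k-2} \preceq \mathcal{U}$ there is $W' \in \mathcal{U}^{\ast,k-2}$ with $U \subseteq W'$; as $x_{k-1} \in W \cap W'$ one gets $x_{k} \in W' \subseteq \St(W,\mathcal{U}^{\ast,k-2}) \in \mathcal{U}^{\ast,k-1}$, while $x_{0} \in W \subseteq \St(W,\mathcal{U}^{\ast,k-2})$. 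Combined with the monotonicity $\St(x,\mathcal{U}^{\ast,k-1}) \subseteq \St(x,\mathcal{U}^{\ast,n-1})$ for $k \le n$, this shows that the endpoints of a path of length $\ell \le n$ are adjacent in $\mathcal{B}(E\setminus F,F\setminus E,\mathcal{U}^{\ast,n-1})$ whenever they are of types $E\setminus F$ and $F\setminus E$.

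With these in hand I treat each component separately, extracting at most one arc of $\psi$: if a path has $v_{0}\in E\setminus F$, $v_{\ell}\in F\setminus E$ and length $\ell \le n$, I set $\psi(v_{0}) \defeq v_{\ell}$, which is legitimate by the previous paragraph; in every other case (a path with some endpoint in $C$, a path of type $(E\setminus F, F\setminus E)$ of length $>n$, or a cycle) the component contributes nothing. Distinct components use disjoint vertex sets, so $\psi$ is a well-defined injective matching. The conclusion then follows from a per-component accounting: writing $e(K)$, $c(K)$ and $g(K)\in\{0,1\}$ for the number of arcs, the number of $C$-vertices, and the number of arcs of $\psi$ contributed by a component $K$, one verifies in each of the finitely many types that $e(K)-g(K) \le \bigl(1+\tfrac{1}{n}\bigr)c(K)$. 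Summing over all components, and using $\sum_{K}e(K)=\mu(E,F,\mathcal{U})$, $\sum_{K}c(K)=|E\cap F|$ and $\sum_{K}g(K)=|\dom(\psi)|$, yields the asserted inequality.

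I expect the real content to sit entirely in this final accounting, and specifically in the one awkward case: a path of type $(E\setminus F, F\setminus E)$ of length $\ell > n$ contributes no arc to $\psi$ yet carries $\ell-1$ vertices of $C$, so that $e(K)-g(K)=\ell$ whereas $\bigl(1+\tfrac{1}{n}\bigr)c(K)=\bigl(1+\tfrac{1}{n}\bigr)(\ell-1)$; the required inequality $\ell \le \bigl(1+\tfrac{1}{n}\bigr)(\ell-1)$ holds precisely because $\ell \ge n+1$. This is exactly where the factor $1+\tfrac{1}{n}$, rather than the naive $2$, is forced, and getting the endpoint-versus-interior count correct in the remaining boundary cases is the point requiring care. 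The attainment of the maximum matching, the path/cycle decomposition, and the star-iteration estimate are all routine.
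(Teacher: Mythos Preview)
Your proposal is correct and is essentially the paper's argument: both take a maximum matching $\phi_{0}$, iterate it along its chains, and compress each chain of length $\le n$ running from $E\setminus F$ to $F\setminus E$ into a single edge of the new matching, with the same star-iteration estimate justifying adjacency in $\mathcal{U}^{\ast,n-1}$. The only difference is bookkeeping: the paper partitions the lost domain as $D_{0}\setminus D_{1}=(D_{0}\cap F)\cup S\cup T$ and bounds $|S|$ and $|T|$ via the injectivity Lemma~\ref{lemma:auxiliary.matchings}, whereas you sum the per-component inequality $e(K)-g(K)\le\bigl(1+\tfrac1n\bigr)c(K)$; these two accountings are equivalent.
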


\begin{proof} Let $E,F \in \mathcal{F}(X)$ and $n \in \mathbb{N}\setminus \{ 0 \}$. Assume $\phi_{0}$ to be a matching in $\mathcal{B}(E,F,\mathcal{U})$ such that $|D_{0}| = \mu (E,F,\mathcal{U})$ where $D_{0} \defeq \dom (\phi_{0})$. We consider the function \begin{displaymath}
	\nu \colon D_{0} \to \{ 1,\ldots, n+1 \} ,\, x \mapsto \sup \{ m \in  \{ 1,\ldots, n+1 \} \mid x \in \dom (\phi_{0}^{m}) \} .
\end{displaymath} Define $D_{1} \defeq \{ x \in D_{0}\setminus F \mid \nu (x) \leq n, \, \phi_{0}^{\nu(x)}(x) \notin E \}$ and $\phi_{1} \colon D_{1} \to F\setminus E, \, x \mapsto \phi_{0}^{\nu(x)}(x)$. By Lemma~\ref{lemma:auxiliary.matchings}, $\phi_{1}$ is injective. We claim that $\phi_{1}$ is a matching in $\mathcal{B}(E\setminus F,F\setminus E,\mathcal{U}^{\ast, n-1})$. To prove this, let $x \in D_{1}$ and put $m \defeq \nu(x)$. By assumption, $\phi_{0}^{i}(x) \in \St (\phi_{0}^{i-1}(x),\mathcal{U})$ for each $i \in \{ 1,\ldots,m \}$, which implies that \begin{displaymath}
	\phi_{1}(x) = \phi_{0}^{m}(x) \in \St (\phi_{0}^{0}(x),\mathcal{U}^{\ast, m-1}) = \St (x,\mathcal{U}^{\ast, m-1}) \subseteq \St (x,\mathcal{U}^{\ast, n-1}) .
\end{displaymath} This proves our claim. We are left to show that $|\dom (\phi_{1})| \geq \mu (E,F,\mathcal{U}) - \left(1+\frac{1}{n}\right)|E \cap F|$. For this purpose, let \begin{align*}
	&S \defeq \{ x \in D_{0}\setminus F \mid \nu(x) \leq n, \, \phi_{0}^{\nu(x)}(x) \in E \} , &T \defeq \{ x \in D_{0}\setminus F \mid \nu(x) = n+1 \} .
\end{align*} Evidently, $D_{0}\setminus D_{1} = (D_{0} \cap F) \cup S \cup T$. By Lemma~\ref{lemma:auxiliary.matchings}, $\psi \colon S \to (E\cap F)\setminus D_{0}, \, x \mapsto \phi_{0}^{\nu(x)}(x)$ is injective, which implies that $|S| \leq |(E\cap F)\setminus D_{0}|$. According to Lemma~\ref{lemma:auxiliary.matchings}, it follows that $\pi \colon T \times \{ 1,\ldots,n \} \to D_{0} \cap F, \, (x,i) \to \phi_{0}^{i}(x)$ is injective. Hence, $|T| \leq \frac{1}{n}|D_{0} \cap F| \leq \frac{1}{n}|E \cap F|$. Therefore, \begin{align*}
	|D_{0}\setminus D_{1}| &\leq |D_{0} \cap F| + |(E\cap F)\setminus D_{0}| + \frac{1}{n}|E \cap F| \\
	&= |D_{0} \cap E \cap F| + |(E\cap F)\setminus D_{0}| + \frac{1}{n}|E \cap F| \\
	&= |E \cap F| + \frac{1}{n}|E \cap F| = \left(1+\frac{1}{n}\right)|E \cap F| .
\end{align*} Hence, $|D_{1}| \geq |D_{0}| - \left(1+\frac{1}{n}\right)|E \cap F|$, that is, $|\dom (\phi_{1})| \geq \mu (E,F,\mathcal{U}) - \left(1+\frac{1}{n}\right)|E \cap F|$. This completes the proof. \end{proof}

Finally in this section, we want to briefly point out the connection between matchings and uniform complexities of finite uniform coverings of uniform spaces.

\begin{lem}\label{lemma:uniform.refinement.map} Let $X$ be a uniform space. Furthermore, let $\mathcal{U},\mathcal{V},\mathcal{W} \in \mathcal{N}(X)$ such that $\mathcal{U} \preceq \mathcal{V}$, $\mathcal{U} \preceq \mathcal{W}$, and $N(\mathcal{U}) = |\mathcal{V}|$. There exists an injective map $\phi \colon \mathcal{V} \to \mathcal{W}$ such that $V \cap \phi (V) \ne \emptyset$ for all $V \in \mathcal{V}$. \end{lem}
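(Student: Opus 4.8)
The plan is to realize the desired map as a perfect matching in a suitable bipartite graph and to verify Hall's condition by a minimality argument. Consider the bipartite graph $\mathcal{B} \defeq (\mathcal{V},\mathcal{W},R)$ with $R \defeq \{ (V,W) \in \mathcal{V} \times \mathcal{W} \mid V \cap W \ne \emptyset \}$. A map $\phi$ as in the statement is precisely a perfect matching in $\mathcal{B}$, so by Corollary~\ref{corollary:hall} it suffices to verify that $|S| \leq |N_{\mathcal{B}}(S)|$ for every $S \subseteq \mathcal{V}$. Toward a contradiction, I would assume there is some $S \subseteq \mathcal{V}$ with $|N_{\mathcal{B}}(S)| < |S|$ and set $T \defeq N_{\mathcal{B}}(S) = \{ W \in \mathcal{W} \mid \exists V \in S \colon V \cap W \ne \emptyset \}$.

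The key idea is then to build a rival refinement of $\mathcal{U}$ that is strictly smaller than $\mathcal{V}$. I would consider $\mathcal{V}' \defeq (\mathcal{V}\setminus S) \cup T$ and show that it lies in $\mathcal{N}(X)$, refines $\mathcal{U}$, and satisfies $|\mathcal{V}'| < |\mathcal{V}|$, contradicting $N(\mathcal{U}) = |\mathcal{V}|$. That $\mathcal{V}'$ covers $X$ follows since every point of $\bigcup S$ lies in some $V \in S$ and, as $\mathcal{W}$ covers $X$, in some $W \in \mathcal{W}$; then $V \cap W \ne \emptyset$ forces $W \in T$, so $\bigcup S \subseteq \bigcup T$ and hence $\bigcup \mathcal{V}' = \bigcup \mathcal{V} = X$. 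That $\mathcal{U} \preceq \mathcal{V}'$ is immediate from $\mathcal{V}\setminus S \subseteq \mathcal{V}$ and $T \subseteq \mathcal{W}$, both refining $\mathcal{U}$. For the cardinality, $|\mathcal{V}'| \leq |\mathcal{V}\setminus S| + |T| = |\mathcal{V}| - |S| + |T| < |\mathcal{V}|$, which is the contradiction sought (this bound is robust: should some set in $T$ coincide with one in $\mathcal{V}\setminus S$, the count only improves).

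The main obstacle is to confirm that $\mathcal{V}'$ is a uniform covering. Here I would exploit that $\mathcal{V} \vee \mathcal{W}$ is a uniform covering of $X$ (as already used in the proofs of Lemma~\ref{lemma:excision.of.uniform.coverings.a} and Lemma~\ref{lemma:excision.of.uniform.coverings.b}; concretely, a common star-refinement $\mathcal{Z}$ furnished by axiom~(2) satisfies $\mathcal{V} \preceq \mathcal{Z}$ and $\mathcal{W} \preceq \mathcal{Z}$, whence $\mathcal{V} \vee \mathcal{W} \preceq \mathcal{Z}$ and axiom~(1) applies). I would then check $\mathcal{V}' \preceq \mathcal{V} \vee \mathcal{W}$ by case analysis on a typical element $V \cap W$: if $V \notin S$ then $V \cap W \subseteq V \in \mathcal{V}'$; if $V \in S$ and $V \cap W \ne \emptyset$ then $W \in T$ and $V \cap W \subseteq W \in \mathcal{V}'$; the remaining empty sets are harmless since $\mathcal{V}'$ is non-empty. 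Applying axiom~(1) to $\mathcal{V}' \preceq \mathcal{V} \vee \mathcal{W}$ with $\mathcal{V} \vee \mathcal{W} \in \mathcal{D}$ yields $\mathcal{V}' \in \mathcal{D}$.

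Once uniformity of $\mathcal{V}'$ is secured, everything else is bookkeeping: $\mathcal{V}' \in \mathcal{N}(X)$ refines $\mathcal{U}$ and has cardinality below $N(\mathcal{U}) = |\mathcal{V}|$, contradicting the definition of $N(\mathcal{U})$. Therefore Hall's condition holds for every $S \subseteq \mathcal{V}$, and Corollary~\ref{corollary:hall} provides the required injective map $\phi \colon \mathcal{V} \to \mathcal{W}$ with $V \cap \phi(V) \ne \emptyset$ for all $V \in \mathcal{V}$.
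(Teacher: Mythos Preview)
Your argument is correct and essentially identical to the paper's: both set up the bipartite graph $\mathcal{B}=(\mathcal{V},\mathcal{W},R)$, verify Hall's condition by showing that $(\mathcal{V}\setminus S)\cup N_{\mathcal{B}}(S)$ is a finite uniform covering refining $\mathcal{U}$ (via the refinement $\mathcal{V}\vee\mathcal{W}$), and then invoke Corollary~\ref{corollary:hall}. The only cosmetic difference is that you phrase the Hall verification by contradiction, whereas the paper derives $|\mathcal{S}|\leq|\mathcal{T}|$ directly from $|\mathcal{V}|\leq|(\mathcal{V}\setminus\mathcal{S})\cup\mathcal{T}|$.
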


\begin{proof} We shall utilize Corollary~\ref{corollary:hall}. For this purpose, we consider the bipartite graph $\mathcal{B} \defeq (\mathcal{V},\mathcal{W},R)$ where $R \defeq \{ (V,W) \in \mathcal{V} \times \mathcal{W} \mid V \cap W \ne \emptyset \}$. Let $\mathcal{S} \subseteq \mathcal{V}$ and $\mathcal{T} \defeq N_{\mathcal{B}}(\mathcal{S})$. We show that $|\mathcal{S}| \leq |\mathcal{T}|$. To this end, we argue that $\mathcal{Z} \defeq (\mathcal{V}\setminus \mathcal{S})\cup \mathcal{T}$ is an element of $\mathcal{N}(X)$. Evidently, $\mathcal{Z}$ is finite. In order to establish that $\mathcal{Z}$ is a uniform covering of $X$, we are going to prove that $\mathcal{V} \vee \mathcal{W}$ refines $\mathcal{Z}$. For this purpose, let $V \in \mathcal{V}$ and $W \in \mathcal{W}$ where $V \cap W \ne \emptyset$. Clearly, if $V \in \mathcal{V}\setminus \mathcal{S}$, then $V \cap W \subseteq V \in \mathcal{V}\setminus \mathcal{S} \subseteq \mathcal{Z}$. Otherwise, $W \in \mathcal{T}$ and hence $V \cap W \subseteq W \in \mathcal{T} \subseteq \mathcal{Z}$. Therefore, $(\mathcal{V} \vee \mathcal{W}) \setminus \{ \emptyset \}$ refines $\mathcal{Z}$. As $X$ is non-empty, it is true that $\mathcal{V} \vee \mathcal{W}$ refines $(\mathcal{V} \vee \mathcal{W})\setminus \{ \emptyset \}$, which implies that $\mathcal{V} \vee \mathcal{W}$ refines $\mathcal{Z}$. Thus, $\mathcal{Z} \in \mathcal{N}(X)$. Now, since $\mathcal{U} \preceq \mathcal{Z}$, it follows that $|\mathcal{V}| \leq |\mathcal{Z}|$ and hence $|\mathcal{S}| \leq |\mathcal{T}|$. Consequently, Corollary~\ref{corollary:hall} asserts that $\mathcal{B}$ admits a perfect matching. That is, there exists an injective map $\phi \colon \mathcal{V} \to \mathcal{W}$ such that $V \cap \phi(V) \ne \emptyset$ for all $V \in \mathcal{V}$. This proves our claim. \end{proof}

\section{Mean topological matching number}\label{section:mean.topological.matching.number}

In this section we want to summarize the main concepts and results of \cite{SchneiderThom}, where a novel invariant for general dynamical systems is introduced.

\begin{definition}[\cite{SchneiderThom}]\label{definition:mean.topological.matching.number} If $(X,G)$ is a dynamical system, then we define the \emph{mean topological matching number} of $(X,G)$ to be the quantity \begin{displaymath}
	\mu (X,G) \defeq \inf_{E \in \mathcal{F}(G)} \inf_{\mathcal{U} \in \mathcal{N}(X)} \sup_{F \in \mathcal{F}_{+}(X)} \inf_{g \in E} \frac{\mu (F,g(F),\mathcal{U})}{|F|} \in [0,1] .
\end{displaymath} \end{definition}

Recall that a topological space is \emph{perfect} if it does not contain any isolated points. Utilizing Theorem~\ref{theorem:hall} as well as the fact that any finite uniform covering of a uniform space admits a uniformly continuous partition of the unity (see \cite{Isbell}), the following result has been established.

\begin{thm}[\cite{SchneiderThom}]\label{theorem:mean.topological.matching.number} Let $(X,G)$ be a dynamical system. If $\mu (X,G) = 1$, then $(X,G)$ is amenable. Conversely, if $(X,G)$ is amenable and $X$ is a perfect Hausdorff space, then $\mu (X,G) = 1$. \end{thm}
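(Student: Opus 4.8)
Suppose $\mu(X,G)=1$. The plan is a Følner-type averaging argument: I will use finite subsets $F\subseteq X$ to manufacture approximately invariant means and then extract a genuinely invariant one by compactness. For a finite nonempty $F\subseteq X$ let $m_{F}$ be the normalised counting measure $m_{F}(f)=\frac{1}{|F|}\sum_{x\in F}f(x)$, which is plainly a mean on $UC_{b}(X)$. The one computational input is the estimate: if $f\in UC_{b}(X)$ oscillates by less than $\delta$ on every member of $\mathcal{U}\in\mathcal{N}(X)$, and $g$ satisfies $\mu(F,g(F),\mathcal{U})\geq(1-\epsilon)|F|$, then $|m_{F}(f)-m_{F}(f\circ g)|\leq\delta+2\epsilon\|f\|_{\infty}$. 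Indeed $m_{F}(f\circ g)=\frac{1}{|F|}\sum_{y\in g(F)}f(y)$, and choosing a maximum matching $\phi$ in $\mathcal{B}(F,g(F),\mathcal{U})$ of size $\mu(F,g(F),\mathcal{U})$, each matched pair $\{x,\phi(x)\}$ lies in a common member of $\mathcal{U}$, so matched contributions differ by at most $\delta$ each, while the at most $\epsilon|F|$ unmatched points on each side contribute at most $\|f\|_{\infty}$ each.

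Since $\mu(X,G)=1$ forces $\sup_{F}\inf_{g\in E}\mu(F,g(F),\mathcal{U})/|F|=1$ for \emph{every} $E\in\mathcal{F}(G)$ and $\mathcal{U}\in\mathcal{N}(X)$, I can select, for each index $\lambda=(E,\mathcal{U},n)$ (directed by inclusion of $E$, by the refinement order $\preceq$ on $\mathcal{U}$, and by $n$), a set $F_{\lambda}$ with $\mu(F_{\lambda},g(F_{\lambda}),\mathcal{U})\geq(1-\tfrac{1}{n})|F_{\lambda}|$ for all $g\in E$. The net $(m_{F_{\lambda}})$ lies in the set of means, a weak-$\ast$ compact set (a closed subset of $\prod_{f\in UC_{b}(X)}[\inf f,\sup f]$), so it has a cluster point $m$. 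Given $f$ and $g_{0}$, uniform continuity yields $\mathcal{U}_{0}\in\mathcal{N}(X)$ on whose members $f$ oscillates arbitrarily little (take $\mathcal{U}_{0}$ to be the preimage under $f$ of a fine covering of $\mathbb{R}$); beyond the index $(\{g_{0}\},\mathcal{U}_{0},n)$ the estimate applies, and passing to the cluster point and letting $\delta\to 0$, $n\to\infty$ gives $m(f)=m(f\circ g_{0})$. Hence $m$ is an invariant mean; note this half uses no hypothesis on $X$.

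\textbf{Converse.} Now assume $(X,G)$ is amenable with $X$ perfect and Hausdorff, and fix $E\in\mathcal{F}(G)$, $\mathcal{U}\in\mathcal{N}(X)$, $\epsilon>0$; I must produce $F\in\mathcal{F}_{+}(X)$ with $\mu(F,g(F),\mathcal{U})\geq(1-\epsilon)|F|$ for all $g\in E$. First I would reduce the abstract mean to a concrete, quantitatively almost-invariant datum. A Hahn--Banach separation shows the finitely supported probability measures are weak-$\ast$ dense in the set of means (a mean outside their closed convex hull would be separated by some $f$ with $m(f)>\sup f$). A standard convexity (Day--Namioka) argument then upgrades weak-$\ast$ almost-invariance to almost-invariance tested against the finite family that \emph{resolves} $\mathcal{U}$, namely a uniformly continuous partition of unity $\{p_{W}\}_{W\in\mathcal{W}}$ subordinate to a covering $\mathcal{W}$ with $\mathcal{U}\preceq^{\ast}\mathcal{W}$ (supplied by Lemma~\ref{lemma:uniform.star.refinements} together with the partition-of-unity property of finite uniform coverings). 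This yields $\nu=\sum_{i}\tfrac{k_{i}}{N}\delta_{x_{i}}$, with distinct $x_{i}$ and $k_{i}\in\mathbb{N}$, whose pushforward $g_{\ast}\nu$ is $\mathcal{W}$-close to $\nu$ for every $g\in E$: all but an $\epsilon$-fraction of the mass couples along pairs lying in a common member of $\mathcal{W}$.

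The final step realises this weighted datum as an honest finite set, and this is where the hypotheses enter. For each $i$ choose a neighbourhood $O_{i}\ni x_{i}$, pairwise disjoint (Hausdorff), small enough that $O_{i}\subseteq\St(x_{i},\mathcal{W})$ and $g(O_{i})\subseteq\St(g(x_{i}),\mathcal{W})$ for all $g\in E$ (uniform continuity of the finitely many $g\in E$). As $X$ has no isolated points, each $O_{i}$ contains at least $k_{i}$ distinct ``clones'' $x_{i}^{(1)},\dots,x_{i}^{(k_{i})}$, and $F\defeq\{x_{i}^{(j)}\}$ has $|F|=N$. A matched pair of the coupling links a copy of some $x_{i}$ to a copy of some $g(x_{i'})$ with $x_{i},g(x_{i'})$ in a common $W\in\mathcal{W}$; transporting it to clones, both $x_{i}^{(j)}$ and $g(x_{i'}^{(j')})$ lie in the single star $\St(W,\mathcal{W})$, which by $\mathcal{U}\preceq^{\ast}\mathcal{W}$ sits inside a member of $\mathcal{U}$. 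This defines an injective $\phi$ covering at least $(1-\epsilon)|F|$ of $F$, giving $\mu(F,g(F),\mathcal{U})\geq(1-\epsilon)|F|$ (one may alternatively certify the matching via Corollary~\ref{corollary:hall}). I expect the main obstacle to be exactly this passage from an abstract mean to a finite set matched to the resolution of $\mathcal{U}$: one must (a) extract almost-invariance in the covering-adapted transport sense rather than merely weak-$\ast$, for which the uniformly continuous partition of unity is the correct finite test family, and (b) convert the rational weights into genuinely distinct points without destroying $\mathcal{U}$-proximity, which is precisely what perfectness (enough nearby points) and the Hausdorff property (keeping them separated) provide.
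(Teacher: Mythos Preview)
The paper does not itself prove this theorem; it is quoted from \cite{SchneiderThom}, with only the remark that the argument there ``utiliz[es] Theorem~\ref{theorem:hall} as well as the fact that any finite uniform covering of a uniform space admits a uniformly continuous partition of the unity''. So there is no in-paper proof to compare against beyond these two hints, both of which your sketch indeed invokes.

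Your forward direction is correct: the matching bound gives the stated oscillation estimate, and weak-$\ast$ compactness of means furnishes an invariant cluster point. The converse has the right architecture, but one step is underspecified and is precisely where Theorem~\ref{theorem:hall} must enter rather than appear as an optional afterthought. From $\sum_{W\in\mathcal{W}}|\nu(p_{W})-g_{\ast}\nu(p_{W})|<\epsilon'$ you assert that ``all but an $\epsilon$-fraction of the mass couples along pairs lying in a common member of $\mathcal{W}$''; this does not follow from any soft transport principle. What works is this: for a subset $I$ of the atom index set put $A\defeq\{W\in\mathcal{W}:W\cap\{x_{i}:i\in I\}\ne\emptyset\}$; since $\spt(p_{W})\subseteq W$ one gets $\sum_{i\in I}\tfrac{k_{i}}{N}\leq\sum_{W\in A}\nu(p_{W})$ and $\sum_{W\in A}g_{\ast}\nu(p_{W})\leq\sum_{i'\in N(I)}\tfrac{k_{i'}}{N}$, where $N(I)$ is the $\mathcal{W}$-neighbourhood of $I$ on the $g$-side, whence the weighted Hall defect is at most $\epsilon'$. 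Ore's formula (Theorem~\ref{theorem:hall}) then yields a matching of size at least $(1-\epsilon')N$ on the $N$-fold blow-up, which your cloning step legitimately realises inside $X$; your star-refinement bookkeeping for transferring edges to clones via $\mathcal{U}\preceq^{\ast}\mathcal{W}$ is correct. Two small further points: passing to rational weights $k_{i}/N$ requires one additional approximation after the convex-combination step, and the Day--Namioka upgrade is unnecessary here because the test family $\{p_{W}\circ g:W\in\mathcal{W},\ g\in E\cup\{\id_{X}\}\}$ is already finite, so plain weak-$\ast$ density of finitely supported measures suffices.
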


This leads to a characterization of amenability for Hausdorff topological groups.

\begin{definition}[\cite{SchneiderThom}] If $G$ is a topological group, then we define the \emph{mean topological matching number} of $G$ to be the quantity \begin{displaymath}
	\mu (G) \defeq \mu (G_{r},\lambda_{G}(G)) = \inf_{E \in \mathcal{F}(G)} \inf_{\mathcal{U} \in \mathcal{N}(G_{r})} \sup_{F \in \mathcal{F}_{+}(G)} \inf_{g \in E} \frac{\mu (F,gF,\mathcal{U})}{|F|} .
\end{displaymath} \end{definition}

\begin{thm}[\cite{SchneiderThom}]\label{theorem:amenable.groups} Let $G$ be a topological group. If $\mu (G) = 1$, then $G$ is amenable. Conversely, if $G$ is amenable and Hausdorff, then $\mu (G) = 1$. \end{thm}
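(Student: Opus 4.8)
The plan is to reduce both implications to Theorem~\ref{theorem:mean.topological.matching.number} applied to the dynamical system $(G_{r},\lambda_{G}(G))$, treating a discrete $G$ separately since it escapes the perfectness hypothesis of that theorem.

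For the forward implication I would merely note that $\mu(G)=\mu(G_{r},\lambda_{G}(G))$ by definition, so $\mu(G)=1$ forces $\mu(G_{r},\lambda_{G}(G))=1$; the first half of Theorem~\ref{theorem:mean.topological.matching.number} then shows that $(G_{r},\lambda_{G}(G))$ is amenable, which by Definition~\ref{definition:amenable.topological.group} is exactly the amenability of $G$. No assumption on the topology of $G$ is needed here.

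For the converse, assume $G$ is amenable and Hausdorff. Since a topological group is homogeneous, it is either discrete or perfect: if some singleton $\{x\}$ were open, then the automorphism $\lambda_{G}(yx^{-1})$ of $G_{r}$ would carry $\{x\}$ onto $\{y\}$, making every singleton open and hence $G$ discrete. Thus if $G$ is non-discrete, the uniform space $G_{r}$ is perfect and Hausdorff, and the second half of Theorem~\ref{theorem:mean.topological.matching.number} applied to the amenable system $(G_{r},\lambda_{G}(G))$ yields $\mu(G)=\mu(G_{r},\lambda_{G}(G))=1$.

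The discrete case, which I expect to be the only genuine obstacle, requires a direct argument. When $G$ is discrete, $\mathcal{N}(G_{r})$ consists of all finite coverings of $G$ and $UC_{b}(G_{r})=\ell^{\infty}(G)$, so amenability of $(G_{r},\lambda_{G}(G))$ coincides with classical amenability of the discrete group $G$; by F{\o}lner's theorem, for each finite $E\subseteq G$ and each $\epsilon>0$ there is a finite non-empty $F\subseteq G$ with $|gF\,\triangle\,F|\le\epsilon|F|$ for all $g\in E$. The crucial observation is that for any $\mathcal{U}\in\mathcal{N}(G_{r})$ the diagonal assignment $x\mapsto x$ is a matching in $\mathcal{B}(F,gF,\mathcal{U})$ with domain $F\cap gF$, since each point of $G$ lies in some member of $\mathcal{U}$; hence $\mu(F,gF,\mathcal{U})\ge|F\cap gF|\ge\bigl(1-\tfrac{\epsilon}{2}\bigr)|F|$, using $|F\setminus gF|=\tfrac{1}{2}|F\,\triangle\,gF|$. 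This lower bound is uniform in $\mathcal{U}$ and holds for arbitrarily small $\epsilon$, so for every finite $E$ and every $\mathcal{U}\in\mathcal{N}(G_{r})$ the quantity $\sup_{F}\inf_{g\in E}\mu(F,gF,\mathcal{U})/|F|$ equals $1$ (being trivially at most $1$), which gives $\mu(G)=1$. Everything outside this discrete estimate is a direct appeal to Theorem~\ref{theorem:mean.topological.matching.number} together with the homogeneity of topological groups.
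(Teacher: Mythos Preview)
Your proposal is correct and follows essentially the same route as the paper: the forward implication is a direct appeal to the first half of Theorem~\ref{theorem:mean.topological.matching.number}, while the converse splits into the non-discrete case (handled via homogeneity and the second half of Theorem~\ref{theorem:mean.topological.matching.number}) and the discrete case (handled via F{\o}lner's theorem). Your treatment of the discrete case is in fact more explicit than the paper's, which only remarks that the result ``is a consequence of Theorem~\ref{theorem:mean.topological.matching.number} combined with F{\o}lner's celebrated characterization'' without spelling out the matching estimate $\mu(F,gF,\mathcal{U})\ge|F\cap gF|$.
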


Since non-discrete topological groups are perfect, Theorem~\ref{theorem:amenable.groups} for non-discrete Hausdorff topological groups immediately follows from Theorem~\ref{theorem:mean.topological.matching.number}. However, in case of discrete groups, Theorem~\ref{theorem:amenable.groups} is a consequence of Theorem~\ref{theorem:mean.topological.matching.number} combined with F\o lner's celebrated characterization of discrete amenable groups \cite{folner}.

\section{Asymptotic uniform complexity}\label{section:asymptotic.uniform.complexity}

In this section we establish a novel invariant for general dynamical systems, which we shall call the \emph{asymptotic uniform complexity}. Subsequently, we explore the connection of this quantity to the mean topological matching number (Theorem~\ref{theorem:first.main.theorem} and Theorem~\ref{theorem:second.main.theorem}) and infer a corresponding characterization of amenability (Corollary~\ref{corollary:amenable}). The obtained results particularly apply to topological groups (Corollary~\ref{corollary:groups}).

\begin{definition}\label{definition:asymptotic.uniform.complexity} If $(X,G)$ is a dynamical system, then we define the \emph{asymptotic uniform complexity} of $(X,G)$ to be the quantity \begin{displaymath}
	\left. \omega (X,G) \defeq \sup_{E \in \mathcal{F}(G)} \sup_{\mathcal{U} \in \mathcal{N}(X)} \inf \left\{ \sup_{g \in E} \frac{N\left(\mathcal{V} \vee g(\mathcal{V})\right)}{N(\mathcal{V})} \, \right| \mathcal{V} \in \mathcal{N}(X), \, \mathcal{U} \preceq \mathcal{V} \right\} .
\end{displaymath} \end{definition}

Evidently, the asymptotic uniform complexity ranges in the interval $[1,\infty)$. However, it is not difficult to prove that it just takes values in $[1,2]$.

\begin{prop}\label{proposition:asymptotic.uniform.complexity} If $X$ is a uniform space, then $\omega (X,\Aut (X)) \leq 2$. \end{prop}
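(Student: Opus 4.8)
The plan is to unwind the definition and reduce to a single clean inequality. Since $\omega(X,\Aut(X))$ is a supremum over finite $E \subseteq \Aut(X)$ and $\mathcal{U} \in \mathcal{N}(X)$ of an infimum over $\mathcal{V} \succeq \mathcal{U}$, it suffices to produce, for each such $E$ and $\mathcal{U}$, a single finite uniform refinement $\mathcal{V}$ of $\mathcal{U}$ with
\[
	N(\mathcal{V} \vee g(\mathcal{V})) \leq 2\, N(\mathcal{V}) \qquad (g \in E).
\]
Two elementary facts feed into this. First, $N$ is monotone: if $\mathcal{A} \preceq \mathcal{B}$ then every finite uniform refinement of $\mathcal{B}$ refines $\mathcal{A}$, so $N(\mathcal{A}) \leq N(\mathcal{B})$. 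Second, $N$ is $\Aut(X)$-invariant: each $g \in \Aut(X)$ induces a cardinality- and $\preceq$-preserving bijection $\mathcal{W} \mapsto g(\mathcal{W})$ of $\mathcal{N}(X)$, whence $N(g(\mathcal{V})) = N(\mathcal{V})$. Thus the displayed bound is exactly the statement $N(\mathcal{V} \vee g(\mathcal{V})) \leq N(\mathcal{V}) + N(g(\mathcal{V}))$ for the chosen $\mathcal{V}$, and the whole proposition follows once it is secured.

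To attack the inequality I would bring in the matching machinery of Section~\ref{section:matchings}. Start by choosing $\mathcal{V} \succeq \mathcal{U}$ with $N(\mathcal{V}) = |\mathcal{V}|$ (take any finite uniform refinement of $\mathcal{U}$ realizing $N(\mathcal{U})$; by monotonicity it automatically satisfies $N(\mathcal{V}) = |\mathcal{V}|$), so that $\mathcal{V}$ and $g(\mathcal{V})$ are both \emph{irreducible}. Let $\mathcal{Z}$ be a finite uniform refinement of $\mathcal{V} \vee g(\mathcal{V})$ with $|\mathcal{Z}| = N(\mathcal{V} \vee g(\mathcal{V}))$. Since $\mathcal{Z}$ refines both $\mathcal{V}$ and $g(\mathcal{V})$, Lemma~\ref{lemma:uniform.refinement.map} (with $N(\mathcal{V}) = |\mathcal{V}|$, and again with $N(g(\mathcal{V})) = |g(\mathcal{V})|$) supplies injections $\phi_1 \colon \mathcal{V} \to \mathcal{Z}$ and $\phi_2 \colon g(\mathcal{V}) \to \mathcal{Z}$ with $V \cap \phi_1(V) \neq \emptyset$ and $(gV) \cap \phi_2(gV) \neq \emptyset$. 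The target bound $|\mathcal{Z}| \leq |\mathcal{V}| + |g(\mathcal{V})| = 2N(\mathcal{V})$ then amounts to showing that the two images exhaust $\mathcal{Z}$, i.e.\ that a minimal refinement of the join consists of ``one layer coming from $\mathcal{V}$ and one coming from $g(\mathcal{V})$''. Corollary~\ref{corollary:hall} together with the excision Lemma~\ref{lemma:excision.of.uniform.coverings.a} should be the right tools for converting ``a member of $\mathcal{Z}$ is unclaimed by $\phi_1,\phi_2$'' into a strictly smaller uniform refinement of $\mathcal{V} \vee g(\mathcal{V})$, contradicting minimality.

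The main obstacle is precisely this exhaustion step, and it is genuinely the crux: the inequality $N(\mathcal{V} \vee g(\mathcal{V})) \leq 2N(\mathcal{V})$ is \emph{not} available for an arbitrary irreducible $\mathcal{V}$. Overlaying two same-sized but ``transverse'' uniform coverings can force the minimal refinement of the join to be far finer than either factor, so an unclaimed member of $\mathcal{Z}$ may carry private points and cannot simply be deleted. This is exactly why the infimum over $\mathcal{V}$ in Definition~\ref{definition:asymptotic.uniform.complexity} is essential: one must select $\mathcal{V}$ refining $\mathcal{U}$ that is \emph{adapted to the finite set} $E$ — isotropic, in the sense that no $g \in E$ exposes a direction along which $\mathcal{V}$ is coarse — so that each cell of an irreducible refinement of $\mathcal{V}$ is split by $g(\mathcal{V})$ into a controlled, two-fold amount. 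Making this ``two-fold splitting'' precise in a general uniform space, presumably by a careful interplay of star-refinements (Lemma~\ref{lemma:uniform.star.refinements}) and the private-point map of Lemma~\ref{lemma:point.projection.map}, is the delicate combinatorial heart of the argument; the constant $2$ itself reflects the extremal one-dimensional phenomenon that overlaying a covering with a single translate of itself at most doubles its complexity.
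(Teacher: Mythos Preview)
Your reduction is correct (with the minor caveat that $\leq 2+\epsilon$ for every $\epsilon$ is what is really needed, and this is what the paper proves), and you are right that the inequality $N(\mathcal{V}\vee g(\mathcal{V}))\leq 2N(\mathcal{V})$ fails for a generic irreducible $\mathcal{V}$, so the choice of $\mathcal{V}$ must depend on $E$ and $\mathcal{U}$. But your proposal stops exactly at the point where the argument begins: you never produce the adapted $\mathcal{V}$. The matching route you sketch---inject $\mathcal{V}$ and $g(\mathcal{V})$ into a minimal $\mathcal{Z}$ via Lemma~\ref{lemma:uniform.refinement.map} and then argue the images exhaust $\mathcal{Z}$---does not lead anywhere without an a priori structural constraint on $\mathcal{Z}$, and none of Lemmas~\ref{lemma:excision.of.uniform.coverings.a}, \ref{lemma:point.projection.map}, or Corollary~\ref{corollary:hall} supplies one. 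The phrase ``isotropic, so that no $g\in E$ exposes a direction along which $\mathcal{V}$ is coarse'' is an intuition, not a construction.

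The paper's proof does something quite different and much more concrete. It first splits on whether the canonical partition $\mathcal{P}=\{P(x):x\in X\}$ of Corollary~\ref{corollary:uniform.partition} is finite (in which case $\mathcal{P}$ itself is an $\Aut(X)$-invariant finite uniform covering and $\omega=1$). In the infinite case, given $\mathcal{U}_{0}$ and $\epsilon$, it picks a finite set $F$ of pairwise $\mathcal{U}_{1}$-separated points with $|F|\geq |\mathcal{U}_{0}|^{2}/\epsilon$ and takes
\[
\mathcal{V}\;=\;\{U\setminus F: U\in\mathcal{U}_{0}\}\ \cup\ \{\St(x,\mathcal{U}_{1}):x\in F\}.
\]
The point is that $N(\mathcal{V})\geq |F|$ because each $x\in F$ lies in only one member of $\mathcal{V}$, while for any $g$ one can refine $\mathcal{V}\vee g(\mathcal{V})$ by at most $2|F|$ small stars around $F\cup g(F)$ together with the $|\mathcal{U}_{0}|^{2}$ background cells of $\mathcal{U}_{0}\vee g(\mathcal{U}_{0})$ with $F\cup g(F)$ excised. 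Thus $N(\mathcal{V}\vee g(\mathcal{V}))\leq 2|F|+|\mathcal{U}_{0}|^{2}\leq(2+\epsilon)|F|\leq(2+\epsilon)N(\mathcal{V})$. The mechanism is not ``each cell splits at most in two'' but rather ``the complexity of $\mathcal{V}$ is carried almost entirely by a cloud of isolated points, and the cloud at worst doubles under $F\mapsto F\cup g(F)$, with a bounded background error''. This idea is absent from your proposal.
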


\begin{proof} Let $P(x) \defeq \bigcap \{ \St (x,\mathcal{U}) \mid \mathcal{U} \in \mathcal{N}(X) \}$ for each $x \in X$. According to Corollary~\ref{corollary:uniform.partition}, $\mathcal{P} \defeq \{ P(x) \mid x \in X \}$ constitutes a partition of $X$. Our proof of Proposition~\ref{proposition:asymptotic.uniform.complexity} proceeds by case analysis depending on whether $\mathcal{P}$ is finite.

First let us treat the case where $\mathcal{P}$ is finite. We argue that $\mathcal{P}$ is a member of $\mathcal{N}(X)$. To this end, let $F \in \mathcal{F}_{+}(X)$ such that $\mathcal{P} = \{ P(x) \mid x \in F \}$ and $|F| = |\mathcal{P}|$. Since $F$ is finite, there exists some $\mathcal{U} \in \mathcal{N}(X)$ such that $y \notin \St (x,\mathcal{U})$ for any two distinct elements $x,y \in F$. By Lemma~\ref{lemma:uniform.star.refinements}, there exists $\mathcal{V} \in \mathcal{N}(X)$ such that $\mathcal{U} \preceq^{\ast} \mathcal{V}$. We prove that $\mathcal{V}$ refines $\mathcal{P}$. For this purpose, let $V \in \mathcal{V}$. If $V = \emptyset$, then $\mathcal{P} \preceq \{ V \}$ as $\mathcal{P} \ne \emptyset$. Otherwise, there exists $x \in F$ such that $V \cap P(x) \ne \emptyset$, which implies that $V \cap \St(x,\mathcal{V}) \ne \emptyset$. We claim that $V \subseteq P(x)$. Let $y \in V$. There exists $z \in F$ such that $y \in P(z)$. Consequently, $y \in \St(z,\mathcal{V})$ and hence $V \cap \St(z,\mathcal{V}) \ne \emptyset$. Since $V \cap \St(x,\mathcal{V}) \ne \emptyset$, we conclude that $\{ x,z \} \subseteq \St (V,\mathcal{V})$. As $\mathcal{U} \preceq^{\ast} \mathcal{V}$, this clearly implies that $x = z$ and thus $y \in P(x)$. This proves that $V \subseteq P(x)$. Accordingly, $\mathcal{V}$ refines $\mathcal{P}$, whence $\mathcal{P}$ is a member of $\mathcal{N}(X)$. Furthermore, $g(\mathcal{P}) = \mathcal{P}$ and hence \begin{displaymath}
	\frac{N(\mathcal{P} \vee g(\mathcal{P}))}{N(\mathcal{P})} = \frac{N(\mathcal{P})}{N(\mathcal{P})} = \frac{|\mathcal{P}|}{|\mathcal{P}|} = 1 
\end{displaymath} for all $g \in \Aut(X)$. Since $\mathcal{U} \preceq \mathcal{P}$ for every $\mathcal{U} \in \mathcal{N}(X)$, this implies that $\omega (X,\Aut(X)) = 1$. This captures the case where $\mathcal{P}$ is finite.

Henceforth, assume $\mathcal{P}$ to be infinite. Let $\epsilon \in (0,\infty)$ and $\mathcal{U}_{0} \in \mathcal{N}(X)$. Since $\mathcal{P}$ is infinite, there exists $F \in \mathcal{F}_{+}(X)$ such that $|F| = |\{ P(x) \mid x \in F \}| \geq |\mathcal{U}_{0}|^{2}\epsilon^{-1}$. Utilizing Lemma~\ref{lemma:uniform.star.refinements} and the fact that $F$ is finite, we conclude that there exists some $\mathcal{U}_{1} \in \mathcal{N}(X)$ such that $\mathcal{U}_{0} \preceq^{\ast} \mathcal{U}_{1}$ and $y \notin \St (x,\mathcal{U}_{1})$ for any two distinct elements $x,y \in F$. According to Lemma~\ref{lemma:excision.of.uniform.coverings.a}, it follows that $\mathcal{V} \defeq \{ U\setminus F \mid U \in \mathcal{U}_{0} \} \cup \{ \St (x,\mathcal{U}_{1}) \mid x \in F \}$ is an element of $\mathcal{N}(X)$. Moreover, $\mathcal{V}$ refines $\mathcal{U}_{0}$. We show that $N(\mathcal{V}) \geq |F|$. Evidently, $F \cap (U \setminus F) = \emptyset$ for all $U \in \mathcal{U}_{0}$ and $F \cap \St (x,\mathcal{U}_{1}) = \{ x \}$ for all $x \in F$. Hence, $N(\mathcal{V}) \geq |F|$. Let $g \in \Aut(X)$. We argue that $N(\mathcal{V} \vee g(\mathcal{V})) \leq 2|F| + \epsilon|F|$. Let $\mathcal{W}_{0} \in \mathcal{N}(X)$ such that $\mathcal{V} \vee g(\mathcal{V}) \preceq^{\ast} \mathcal{W}_{0}$. By Lemma~\ref{lemma:excision.of.uniform.coverings.a}, \begin{displaymath}
	\mathcal{W} \defeq \left\{ U\setminus (F \cup g(F)) \left| U \in \mathcal{U}_{0} \vee g(\mathcal{U}_{0}) \right\} \cup \{ \St (x,\mathcal{W}_{0}) \mid x \in F \cup g(F) \} \right.
\end{displaymath} is a member of $\mathcal{N}(X)$. We are left to show that $\mathcal{V} \vee g(\mathcal{V}) \preceq \mathcal{W}$. To this end, we observe that $\left( U \cap g(V) \right)\setminus (F \cup g(F)) = (U\setminus F) \cap g(V\setminus F) \in \mathcal{V} \vee g(\mathcal{V})$ for all $U,V \in \mathcal{U}_{0}$, and thus $\{ U\setminus (F \cup g(F)) \mid U \in \mathcal{U}_{0} \vee g(\mathcal{U}_{0}) \} \subseteq \mathcal{V} \vee g(\mathcal{V})$. Moreover, $\mathcal{V} \vee g(\mathcal{V}) \preceq \{ \St (x,\mathcal{W}_{0}) \mid x \in F \cup g(F) \}$ because $\mathcal{V} \vee g(\mathcal{V}) \preceq^{\ast} \mathcal{W}_{0}$. It follows that $\mathcal{V} \vee g(\mathcal{V}) \preceq \mathcal{W}$. Consequently, \begin{displaymath}
	N\left(\mathcal{V} \vee g(\mathcal{V})\right) \leq |\mathcal{W}| \leq 2|F| + |\mathcal{U}_{0}|^{2} \leq 2|F| + \epsilon |F| ,
\end{displaymath} and thus \begin{displaymath}
	\frac{N\left(\mathcal{V} \vee g(\mathcal{V})\right)}{N(\mathcal{V})} \leq \frac{2|F| + \epsilon |F|}{|F|} = 2 + \epsilon .
\end{displaymath} This substantiates that $\omega (X,G) \leq 2$. \end{proof}

Subsequently, we are going to point out the close connection between the quantity introduced in Definition~\ref{definition:asymptotic.uniform.complexity} and the mean topological matching number (see Definition~\ref{definition:mean.topological.matching.number}). Our first objective is to establish a lower bound for the former in terms of the latter.

\begin{thm}\label{theorem:first.main.theorem} If $(X,G)$ is a dynamical system, then $\omega (X,G) \geq 2 - \mu (X,G)$. \end{thm}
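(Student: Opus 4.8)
The plan is to exploit the parallel quantifier structure of the two invariants. For fixed $E \in \mathcal{F}(G)$ and $\mathcal{U} \in \mathcal{N}(X)$, write $\omega_{E,\mathcal{U}}$ for the inner infimum appearing in Definition~\ref{definition:asymptotic.uniform.complexity} and put $\mu_{E,\mathcal{U}} \defeq \sup_{F \in \mathcal{F}_{+}(X)} \inf_{g \in E} \mu(F,g(F),\mathcal{U})/|F|$, so that $\omega(X,G) = \sup_{E,\mathcal{U}} \omega_{E,\mathcal{U}}$ and $\mu(X,G) = \inf_{E,\mathcal{U}} \mu_{E,\mathcal{U}}$. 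Since $2 - \mu(X,G) = \sup_{E,\mathcal{U}} (2 - \mu_{E,\mathcal{U}})$, it suffices to show that for every $E' \in \mathcal{F}(G)$ and every $\mathcal{U}' \in \mathcal{N}(X)$ one has $\omega(X,G) \geq 2 - \mu_{E',\mathcal{U}'}$; taking the supremum over $E',\mathcal{U}'$ then yields the claim. I would prove this by setting $E \defeq E'$ and choosing, with the help of Lemma~\ref{lemma:uniform.star.refinements} and the uniformity axioms, a sufficiently fine $\mathcal{U} \in \mathcal{N}(X)$ for which $(\mathcal{U} \wedge g(\mathcal{U}))^{\ast,2}$ refines $\mathcal{U}'$ for every $g \in E'$. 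This is possible because $E'$ is finite and each $g$ is an automorphism: first pick a double uniform star-refinement $\mathcal{U}''$ of $\mathcal{U}'$ (so that $(\mathcal{U}'')^{\ast,2}$ refines $\mathcal{U}'$), then take $\mathcal{U} \defeq \mathcal{U}'' \vee \bigvee_{g \in E'} g^{-1}(\mathcal{U}'')$, so that both $\mathcal{U}$ and $g(\mathcal{U})$ refine $\mathcal{U}''$. The target then reduces to $\omega_{E',\mathcal{U}} \geq 2 - \mu_{E',\mathcal{U}'}$.

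The heart of the argument is the covering-wise estimate: for every $\mathcal{V} \in \mathcal{N}(X)$ with $\mathcal{U} \preceq \mathcal{V}$ there is a finite set $F \subseteq X$ with $|F| = N(\mathcal{V})$ such that $N(\mathcal{V} \vee g(\mathcal{V})) \geq 2 N(\mathcal{V}) - \mu(F,g(F),\mathcal{U}')$ for all $g \in E'$. To produce $F$, I would pass to a refinement $\mathcal{V}_{0}$ of $\mathcal{V}$ with $|\mathcal{V}_{0}| = N(\mathcal{V}_{0}) = N(\mathcal{V})$ and apply Lemma~\ref{lemma:point.projection.map} (to $\mathcal{V}_{0}$ and one of its star-refinements) to obtain an injective selection $\pi \colon \mathcal{V}_{0} \to X$ with $\pi(V_{0}) \in V_{0}$; set $F \defeq \pi(\mathcal{V}_{0})$, whence $g(F)$ is a corresponding injective selection for $g(\mathcal{V}_{0})$. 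Now fix $g \in E'$ and a covering $\mathcal{W} \in \mathcal{N}(X)$ with $\mathcal{V} \vee g(\mathcal{V}) \preceq \mathcal{W}$ and $|\mathcal{W}| = N(\mathcal{V} \vee g(\mathcal{V}))$. Since $g$ is an automorphism, $N(g(\mathcal{V})) = N(\mathcal{V}) = |g(\mathcal{V}_{0})|$, so Lemma~\ref{lemma:uniform.refinement.map} applies both to the triple $(\mathcal{V},\mathcal{V}_{0},\mathcal{W})$ and to $(g(\mathcal{V}),g(\mathcal{V}_{0}),\mathcal{W})$, yielding injective maps $\phi \colon \mathcal{V}_{0} \to \mathcal{W}$ and $\psi \colon g(\mathcal{V}_{0}) \to \mathcal{W}$ with $V_{0} \cap \phi(V_{0}) \ne \emptyset$ and $gV_{0} \cap \psi(gV_{0}) \ne \emptyset$. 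An inclusion–exclusion count for the two images inside $\mathcal{W}$ gives $|A| \geq 2N(\mathcal{V}) - N(\mathcal{V} \vee g(\mathcal{V}))$ for $A \defeq \phi(\mathcal{V}_{0}) \cap \psi(g(\mathcal{V}_{0}))$, and each $W \in A$ determines a pair $(\phi^{-1}(W),\psi^{-1}(W))$, hence via $\pi$ an injective partial assignment $\Theta$ from $F$ into $g(F)$ with $|\dom \Theta| = |A|$.

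It remains to verify that $\Theta$ is a matching in $\mathcal{B}(F,g(F),\mathcal{U}')$, i.e.\ that the points it pairs share a $\mathcal{U}'$-block. For $W \in A$ the two associated points lie in $V_{0} \defeq \phi^{-1}(W)$ respectively in $gV_{0}' \defeq \psi^{-1}(W)$, both of which meet $W$; since $W$ lies in some member of $\mathcal{V} \vee g(\mathcal{V})$, one joins the two points by a chain of three sets drawn from $\mathcal{V} \wedge g(\mathcal{V})$, and the standard star-length estimate (exactly as in the proof of Lemma~\ref{lemma:compatible.matchings}) places the second point in $\St(\,\cdot\,,(\mathcal{V} \wedge g(\mathcal{V}))^{\ast,2})$ of the first. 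As $\mathcal{V} \preceq \mathcal{U}$ forces $\mathcal{V} \wedge g(\mathcal{V})$ to refine $\mathcal{U} \wedge g(\mathcal{U})$, and the star operation is monotone with respect to refinement, the relevant member of $(\mathcal{V} \wedge g(\mathcal{V}))^{\ast,2}$ refines a member of $(\mathcal{U} \wedge g(\mathcal{U}))^{\ast,2}$, which by the choice of $\mathcal{U}$ refines a single block of $\mathcal{U}'$; hence the edge is present. Thus $\mu(F,g(F),\mathcal{U}') \geq |A| \geq 2N(\mathcal{V}) - N(\mathcal{V} \vee g(\mathcal{V}))$. Dividing by $N(\mathcal{V}) = |F|$, taking the supremum over $g \in E'$ yields $\sup_{g \in E'} N(\mathcal{V} \vee g(\mathcal{V}))/N(\mathcal{V}) \geq 2 - \mu_{E',\mathcal{U}'}$; the infimum over $\mathcal{V}$ then gives $\omega_{E',\mathcal{U}} \geq 2 - \mu_{E',\mathcal{U}'}$, and the final supremum over $E',\mathcal{U}'$ gives $\omega(X,G) \geq 2 - \mu(X,G)$.

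The main obstacle I anticipate is precisely the last verification together with the uniform choice of $\mathcal{U}$: the natural witnesses for the closeness of the matched points involve the $g$-dependent objects $g(\mathcal{V})$ and $\mathcal{W}$, whereas both the block covering $\mathcal{U}'$ and the set $F$ must be fixed before $g$ is selected (the matching number $\mu(F,g(F),\mathcal{U}')$ first fixes $F$, then infimizes over $g$). Reconciling these — arranging a single $\mathcal{U}$ that, after two star-steps applied to the $g$-dependent coverings, still collapses every chain into a fixed $\mathcal{U}'$-block for all $g \in E'$ simultaneously — is the delicate point, and it is what dictates both the choice of $\mathcal{U}$ and the appearance of the factor $2$.
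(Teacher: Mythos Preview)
Your proposal is correct and follows essentially the same route as the paper: both arguments select a minimal refinement, extract an injective point selection via Lemma~\ref{lemma:point.projection.map}, apply Lemma~\ref{lemma:uniform.refinement.map} twice to obtain two injections into a minimal refinement of $\mathcal{V}\vee g(\mathcal{V})$, and then use inclusion--exclusion on the images together with a star-refinement bound to produce the matching. The only organisational difference is that the paper fixes $E,\mathcal{U}$, picks a single near-optimal witness $\mathcal{U}_1$ for $\omega(X,G)$, and proves $\mu(X,G)\geq 2-\omega(X,G)-\epsilon$, whereas you run the dual quantifier scheme and bound $\omega_{E',\mathcal{U}}$ from below uniformly over all refining $\mathcal{V}$; the core estimate is the same.
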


\begin{proof} Let $\epsilon \in (0,\infty)$, $E \in \mathcal{F}(G)$ and $\mathcal{U} \in \mathcal{N}(X)$. By Lemma~\ref{lemma:uniform.star.refinements}, there exists $\mathcal{U}_{0} \in \mathcal{N}(X)$ such that $\mathcal{U} \preceq^{\ast} \mathcal{U}_{0}$. Let $\mathcal{U}_{1} \in \mathcal{N}(X)$ such that $\sup_{g \in E} N(\mathcal{U}_{1} \vee g(\mathcal{U}_{1})) \leq (\omega(X,G) + \epsilon)N(\mathcal{U}_{1})$ and $\mathcal{U}_{0} \vee \bigvee_{g \in E} g^{-1}(\mathcal{U}_{0}) \preceq \mathcal{U}_{1}$. Let $\mathcal{V} \in \mathcal{N}(X)$ where $\mathcal{U}_{1} \preceq \mathcal{V}$ and $|\mathcal{V}| = N(\mathcal{U}_{1})$. Clearly, $N(\mathcal{V}) = |\mathcal{V}|$. According to Lemma~\ref{lemma:point.projection.map}, there is an injective map $\pi \colon \mathcal{V} \to X$ such that $\pi (V) \in V$ for all $V \in \mathcal{V}$. Define $F \defeq \pi (\mathcal{V})$. We are going to show that \begin{displaymath}
		\inf_{g \in E} \mu (F,g(F),\mathcal{U}) \geq (2-\omega(X,G) - \epsilon ) |F| .
\end{displaymath} To this end, let $g \in E$. There is $\mathcal{W} \in \mathcal{N}(X)$ such that $\mathcal{U}_{1} \vee g(\mathcal{U}_{1}) \preceq \mathcal{W}$ and $|\mathcal{W}| = N(\mathcal{U}_{1} \vee g(\mathcal{U}_{1}))$. Since $\mathcal{W}$ refines $\mathcal{U}_{1}$, Lemma~\ref{lemma:uniform.refinement.map} asserts that there exists an injective map $\phi \colon \mathcal{V} \to \mathcal{W}$ such that $V \cap \phi (V) \ne \emptyset$ for all $V \in \mathcal{V}$. Moreover, since $g^{-1}(\mathcal{W})$ refines $\mathcal{U}_{1}$, Lemma~\ref{lemma:uniform.refinement.map} implies that there exists an injective map $\psi \colon \mathcal{V} \to g^{-1}(\mathcal{W})$ such that $V \cap \psi (V) \ne \emptyset$ for all $V \in \mathcal{V}$. Define $D \defeq \pi (\phi^{-1}(g(\psi (\mathcal{V}))))$ and $\rho \colon D \to g(F), \, x \mapsto g(\pi(\psi^{-1}(g^{-1}(\phi(\pi^{-1}(x))))))$. Evidently, $\rho$ is injective. We argue that $\rho$ constitutes a matching in $\mathcal{B}(F,g(F),\mathcal{U})$. To this end, let $x \in D$. Now, $x \in \pi^{-1}(x)$ by hypothesis on $\pi$. Besides, $\pi^{-1}(x) \cap \phi(\pi^{-1}(x)) \ne \emptyset$ by hypothesis on $\phi$. As $x \in D$, we conclude that $g^{-1}(\phi(\pi^{-1}(x))) \in \psi(\mathcal{V})$. By assumption about $\psi$, it follows that $g^{-1}(\phi(\pi^{-1}(x))) \cap \psi^{-1}(g^{-1}(\phi (\pi^{-1}(x)))) \ne \emptyset$, thus $\phi(\pi^{-1}(x)) \cap g(\psi^{-1}(g^{-1}(\phi (\pi^{-1}(x))))) \ne \emptyset$. Additionally, $\rho (x) \in g(\psi^{-1}(g^{-1}(\phi (\pi^{-1}(x)))))$. Since $\mathcal{U} \preceq^{\ast} \mathcal{U}_{0}$ and $\mathcal{U}_{0} \vee \bigvee_{g \in E} g^{-1}(\mathcal{U}_{0}) \preceq \mathcal{U}_{1}$, we may conclude that there exists some $U \in \mathcal{U}$ such that $\{ x,\rho(x) \} \subseteq U$. This proves our claim. Finally, we establish that \begin{align*}
	|D| &= |\phi (\mathcal{V}) \cap g(\psi (\mathcal{V}))| \geq |\mathcal{W}| - |\mathcal{W}\setminus \phi (\mathcal{V})| - |\mathcal{W}\setminus g(\psi (\mathcal{V}))| = |\mathcal{W}| - 2(|\mathcal{W}| - |\mathcal{V}|) \\
	&= 2|\mathcal{V}| - |\mathcal{W}| \geq 2|\mathcal{V}| - (\omega (X,G) + \epsilon )|\mathcal{V}| = (2-\omega(X,G) - \epsilon ) |F| ,
\end{align*} wherefore $\mu (F,g(F),\mathcal{U}) \geq |D| \geq (2-\omega(X,G) - \epsilon ) |F|$. This proves our claim. We conclude that $\mu (X,G) \geq 2 - \omega(X,G) - \epsilon$ for all $\epsilon \in (0,\infty)$. This readily implies that $\mu (X,G) \geq 2 - \omega(X,G)$, which completes the proof. \end{proof}

We aim at providing an exact equation relating the asymptotic uniform complexity of a perfect Hausdorff dynamical system to its mean topological matching number (Theorem~\ref{theorem:second.main.theorem}). For this purpose, we shall utilize the following three auxiliary lemmata.

\begin{lem}\label{lemma:large.matchings} Let $X$ be a perfect Hausdorff space. Let $\mathcal{U}$ be a finite open covering of $X$, let $E$ be a finite set of homeomorphisms of $X$ to itself, let $F_{0} \in \mathcal{F}(X)$ and $n \in \mathbb{N}$. Then there exists $F \in \mathcal{F}(X)$ such that $|F| = n|F_{0}|$ and $\mu (F,g(F),\mathcal{U}) \geq n \mu (F_{0},g(F_{0}),\mathcal{U})$ for each $g \in E$. \end{lem}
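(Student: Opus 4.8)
The plan is to realise $F$ as a \emph{blow-up} of $F_{0}$: I replace each point of $F_{0}$ by $n$ distinct points compressed into a neighbourhood small enough that the combinatorics of the matchings is faithfully reproduced $n$-fold. Perfectness is exactly what guarantees there is room to do this, and the finiteness of $E$ and $F_{0}$ is what lets me meet all the constraints simultaneously. The cases $n=0$ and $F_{0}=\emptyset$ are trivial, so I assume $n \geq 1$ and $F_{0} \neq \emptyset$.

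First I would fix, for each $g \in E$, a maximum matching $\psi_{g}$ in $\mathcal{B}(F_{0},g(F_{0}),\mathcal{U})$ with domain $D_{g}$ satisfying $|D_{g}| = \mu(F_{0},g(F_{0}),\mathcal{U})$. For every $p \in D_{g}$ I pick $U_{g,p} \in \mathcal{U}$ with $\{p,\psi_{g}(p)\} \subseteq U_{g,p}$ and set $\tau_{g}(p) \defeq g^{-1}(\psi_{g}(p)) \in F_{0}$, so that $\{p,g(\tau_{g}(p))\} \subseteq U_{g,p}$ and $\tau_{g}$ is injective. Next, to each $q \in F_{0}$ I attach an open neighbourhood $W_{q}$ recording every requirement that $q$ will face in its various roles: I demand $W_{q} \subseteq U_{g,q}$ whenever $q \in D_{g}$ (the ``left'' role) and $W_{q} \subseteq g^{-1}(U_{g,\tau_{g}^{-1}(q)})$ whenever $q \in \tau_{g}(D_{g})$ (the ``right'' role, where continuity of $g$ is used to keep $g(W_{q})$ inside the relevant member of $\mathcal{U}$, which is open by hypothesis). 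Since $E$ and $F_{0}$ are finite, each $W_{q}$ is a finite intersection of open sets containing $q$, hence an open neighbourhood of $q$; using that $X$ is Hausdorff I additionally shrink the $W_{q}$ so as to be pairwise disjoint.

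The crucial input from the hypotheses is that $X$ is perfect and Hausdorff, hence $T_{1}$ with no isolated points, so every nonempty open subset of $X$ is infinite; in particular each $W_{q}$ contains $n$ pairwise distinct points $q^{(1)},\dots,q^{(n)}$. I then put $F \defeq \{q^{(j)} \mid q \in F_{0},\, 1 \leq j \leq n\}$, which by disjointness of the $W_{q}$ has exactly $n|F_{0}|$ elements. For each $g \in E$ I define $\psi_{g}'(p^{(j)}) \defeq g(\tau_{g}(p)^{(j)})$ on the set $\{p^{(j)} \mid p \in D_{g},\, 1 \leq j \leq n\}$. The two containment constraints imposed on the neighbourhoods force $p^{(j)} \in U_{g,p}$ and $g(\tau_{g}(p)^{(j)}) \in U_{g,p}$, so $\{p^{(j)},\psi_{g}'(p^{(j)})\} \subseteq U_{g,p} \in \mathcal{U}$ and $\psi_{g}'$ is a matching in $\mathcal{B}(F,g(F),\mathcal{U})$; injectivity of $g$, injectivity of $\tau_{g}$, and disjointness of the $W_{q}$ make $\psi_{g}'$ injective with $|\dom(\psi_{g}')| = n|D_{g}|$. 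Hence $\mu(F,g(F),\mathcal{U}) \geq n|D_{g}| = n\,\mu(F_{0},g(F_{0}),\mathcal{U})$ for every $g \in E$, as required.

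I expect the only genuine subtlety to be the simultaneous treatment of all $g \in E$: a single point $q$ may sit on the left of one matching and on the right of another, so $W_{q}$ must satisfy several containments at once, and one must verify that these are genuinely compatible (they are, since each is an open set containing $q$ and only finitely many occur). Everything else is bookkeeping; the two load-bearing facts are that $\mathcal{U}$ consists of open sets and that perfectness supplies infinitely many points in each $W_{q}$, which together let the blow-up preserve every matched edge.
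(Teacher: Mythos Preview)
Your argument is correct and follows essentially the same blow-up strategy as the paper: replace each point of $F_{0}$ by $n$ nearby points, using perfectness to supply them and openness of $\mathcal{U}$ to propagate the matching edges. The only differences are cosmetic: the paper defines a single matching-agnostic neighbourhood $U(x)=\bigcap\{U\in\mathcal{U}\wedge\bigwedge_{g\in E}g^{-1}(\mathcal{U})\mid x\in U\}$ rather than fixing the matchings first and tailoring $W_q$ to the witnessed edges, and the paper does not bother to make the neighbourhoods themselves disjoint---it simply chooses the finite point-sets $\Phi(x)$ to be pairwise disjoint, which suffices.
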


\begin{proof} For each $x \in F_{0}$, the set $U(x) \defeq \bigcap \{ U \in \mathcal{U} \wedge \bigwedge_{g \in E} g^{-1}(\mathcal{U}) \mid x \in U \}$ constitutes an open neighborhood of $x$ in $X$. Since $X$ is a perfect Hausdorff space, every open non-empty subset of $X$ is infinite. Hence, there is $\Phi \colon F_{0} \to \mathcal{F}(X)$ such that \begin{enumerate}
	\item[(1)]	$\forall x \in F_{0} \colon \, \Phi(x) \subseteq U(x)$,
	\item[(2)]	$\forall x \in F_{0} \colon \, |\Phi(x)| = n$,
	\item[(3)]	$\forall x,y \in F_{0} \colon \, x \ne y \Longrightarrow \Phi(x) \cap \Phi(y) = \emptyset$.
\end{enumerate} Let $F \defeq \bigcup \{ \Phi(x) \mid x \in F_{0} \}$. We observe that $|F| = n|F_{0}|$. Let $g \in E$. Assume $\psi_{0}$ to be a matching in $\mathcal{B}(F_{0},g(F_{0}),\mathcal{U})$ such that $|D_{0}| = \mu (F_{0},g(F_{0}),\mathcal{U})$ where $D_{0} \defeq \dom (\psi_{0})$. Let us define $D \defeq \bigcup \{ \Phi(x) \mid x \in D_{0} \}$. Note that $|D| = n|D_{0}|$ by (2) and (3). Furthermore, due to (2) and (3), there exists an injective map $\psi \colon D \to g(F)$ such that $\psi (\Phi (x)) = g(\Phi(g^{-1}(\psi_{0}(x))))$ for all $x \in D_{0}$. We argue that $\psi$ constitutes a matching in $\mathcal{B} (F,g(F),\mathcal{U})$. To this end, let $z \in D$. There exists $x \in D_{0}$ where $z \in \Phi (x)$. Furthermore, there exists $U \in \mathcal{U}$ such that $\{ x, \psi_{0}(x) \} \subseteq U$, which means that $x \in U$ and $g^{-1}(\psi_{0}(x)) \in g^{-1}(U)$. Due to (1), the first assertion readily implies that $z \in U$. Besides, $\psi (z) \in g(\Phi(g^{-1}(\psi_{0}(x))))$. Therefore, $g^{-1}(\psi(z)) \in \Phi(g^{-1}(\psi_{0}(x)))$ and hence $g^{-1}(\psi (z)) \in g^{-1}(U)$ by (1). Thus, $\{ z,\psi(z) \} \subseteq U$. This proves our claim. Accordingly, $\mu (F,g(F),\mathcal{U}) \geq |D| = n|D_{0}| = n \mu (F_{0},g(F_{0}),\mathcal{U})$. \end{proof}

\begin{lem}\label{lemma:matchings.vs.fixed.points} Let $X$ be a perfect Hausdorff topological space, let $Y$ be a dense subset of $X$, let $\mathcal{U}$ be a finite open covering of $X$, let $E \in \mathcal{F}(\Sym(X))$ and $F_{0} \in \mathcal{F}(X)$. Then there exists $F_{1} \in \mathcal{F}(Y)$ and a bijection $\Phi \colon F_{1} \to F_{0}$ such that \begin{enumerate}
	\item[(1)]	$\forall x \in F_{1} \colon \, x \in \bigcap \{ U \in \mathcal{U} \mid \Phi(x) \in U \}$,
	\item[(2)]	$\forall g \in E \, \forall x \in F_{1} \cap g(F_{1}) \colon \, g(x) = x$.
\end{enumerate} \end{lem}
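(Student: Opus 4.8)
The plan is to construct $F_{1}$ and $\Phi$ by choosing, for each point of $F_{0}$, a single nearby point of $Y$, doing so greedily so as to avoid unwanted coincidences under the maps in $E$. For $x \in F_{0}$ put $U(x) \defeq \bigcap \{ U \in \mathcal{U} \mid x \in U \}$; since $\mathcal{U}$ is a finite open covering of $X$, this is a finite intersection of open sets containing $x$, hence an open neighbourhood of $x$ and in particular non-empty. If I arrange that the intended $\Phi$-preimage $a_{x}$ of each $x$ satisfies $a_{x} \in U(x) \cap Y$, then (1) holds automatically, because $\bigcap \{ U \in \mathcal{U} \mid \Phi(a_{x}) \in U \} = U(x) \ni a_{x}$.

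Before constructing the points I would reformulate (2) as a purely combinatorial non-coincidence condition. Writing $F_{1} = \{ a_{x} \mid x \in F_{0} \}$ with $\Phi(a_{x}) = x$ and the $a_{x}$ pairwise distinct, condition (2) asks that for every $g \in E$, whenever $a_{x} = g(a_{y})$ for some $x,y \in F_{0}$ one has $g(a_{x}) = a_{x}$. If $x = y$ this conclusion is exactly the hypothesis $a_{x} = g(a_{x})$, so such ``diagonal'' coincidences are harmless and may be allowed. If $x \ne y$, then $a_{x} \ne a_{y}$, and $g(a_{x}) = a_{x}$ together with $g(a_{y}) = a_{x}$ would force $a_{x} = a_{y}$ by injectivity of $g$, a contradiction; hence for $x \ne y$ one must simply ensure $a_{x} \ne g(a_{y})$. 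Thus (2) is equivalent to requiring that the $a_{x}$ be pairwise distinct and that $a_{x} \ne g(a_{y})$ for all $g \in E$ and all distinct $x,y \in F_{0}$.

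With this reduction in hand I would enumerate $F_{0} = \{ x_{1}, \dots, x_{k} \}$ and choose $a_{x_{1}}, \dots, a_{x_{k}}$ in turn. Having chosen $a_{x_{1}}, \dots, a_{x_{i-1}}$, I pick $a_{x_{i}} \in U(x_{i}) \cap Y$ avoiding the finite set
\[
	B_{i} \defeq \{ a_{x_{j}} \mid j < i \} \cup \{ g(a_{x_{j}}) \mid g \in E, \, j < i \} \cup \{ g^{-1}(a_{x_{j}}) \mid g \in E, \, j < i \},
\]
which guarantees both distinctness and $a_{x_{i}} \ne g(a_{x_{j}})$ as well as $a_{x_{j}} \ne g(a_{x_{i}})$ for all $g \in E$ and $j < i$; ranging over all pairs this yields exactly the condition isolated above (note that forbidding $g^{-1}(a_{x_{j}})$ is what covers the pairs in which the new index is the smaller one). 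The step that makes the construction go through — and the main point of the argument — is that $U(x_{i}) \cap Y$ is infinite, so that deleting the finite set $B_{i}$ still leaves a point to choose. To see this I would use that, $X$ being perfect and Hausdorff, any non-empty open set $U$ contains, for every $m \in \mathbb{N}$, $m$ pairwise disjoint non-empty open subsets (separate $m$ distinct points of $U$ by the Hausdorff property, the existence of arbitrarily many points being guaranteed by perfectness, since every non-empty open subset of $X$ is infinite), each of which meets the dense set $Y$; hence $U \cap Y$ is infinite. Setting $F_{1} \defeq \{ a_{x} \mid x \in F_{0} \}$ and $\Phi(a_{x}) \defeq x$ then completes the construction, (1) holding by the choice $a_{x} \in U(x)$ and (2) by the reduction. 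The only genuinely delicate point is the combinatorial reduction of (2), which is what dictates forbidding both $g$- and $g^{-1}$-images at each stage; once that is in place, the topological input is the routine fact that in a perfect Hausdorff space a dense subset meets every non-empty open set in an infinite set.
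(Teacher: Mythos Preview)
Your proof is correct and follows essentially the same approach as the paper: both construct $F_{1}$ greedily (the paper phrases it as choosing a pair $(F_{1},\Phi)$ of maximal size and extending it), picking each new point in $U(y)\cap Y$ while avoiding the previously chosen points together with their $g$- and $g^{-1}$-images for $g\in E$. Your explicit combinatorial reformulation of condition~(2) and your argument that $Y\cap U$ is infinite are spelled out more carefully than in the paper, but the underlying idea is identical.
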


\begin{proof} Denote by $\mathcal{Z}$ the set of all pairs $(F_{1},\Phi)$ consisting of a finite subset $F_{1} \subseteq Y$ and an injective map $\Phi \colon F_{1} \to F_{0}$ and satisfying the conditions (1) and (2). Let $(F_{1},\Phi) \in \mathcal{Z}$ such that $|F_{1}| = \sup \{ |F_{2}| \mid (F_{2},\Psi) \in \mathcal{Z} \}$. We claim that $|F_{1}| = |F_{0}|$. On the contrary, assume that $|F_{1}| < |F_{0}|$. Then there exists $y \in F_{0}\setminus \Phi(F_{1})$. Since $X$ is a perfect Hausdorff space, every open non-empty subset of $X$ is infinite. Consequently, there exists some \begin{displaymath}
	x \in Y \cap \left( \left(\bigcap \{ U \in \mathcal{U} \mid y \in U \}\right) \setminus \left( F_{1} \cup \bigcup\nolimits_{g \in E} g(F_{1}) \cup g^{-1}(F_{1}) \right) \right) .
\end{displaymath} We define $F_{2} \defeq F_{1} \cup \{ x \}$ and $\Psi \colon F_{2} \to F_{0}$ such that $\Psi|_{F_{1}} = \Phi$ and $\Psi(x) = y$. We observe that $(F_{2},\Psi)$ is a member of $\mathcal{Z}$. Since $|F_{1}| < |F_{2}|$, this clearly contradicts our hypothesis. Hence, $|F_{1}| = |F_{0}|$ and therefore $\Phi$ is a bijection. This finishes the proof. \end{proof}

\begin{lem}\label{lemma:large.matchings.vs.fixed.points} Let $X$ be a perfect Hausdorff uniform space, let $Y$ be a dense subset of $X$, and let $G$ be a subgroup of $\Aut(X)$. Furthermore, let $\mathcal{U} \in \mathcal{N}(X)$, $E \in \mathcal{F}(G)$, $\epsilon \in (0,\infty)$ and $n \in \mathbb{N}$. Then there exists $F \in \mathcal{F}_{+}(Y)$ such that \begin{enumerate}
	\item[(1)]	$\inf_{g \in E} \mu (F,g(F),\mathcal{U}) \geq (\mu (X,G) - \epsilon)|F|$,
	\item[(2)]	$|F| \geq n$,
	\item[(3)]	$\forall g \in E \, \forall x \in F \cap g(F) \colon \, g(x) = x$.
\end{enumerate} \end{lem}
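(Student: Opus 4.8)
The plan is to combine the three preceding lemmas, each of which handles one of the three desired properties, by applying them in a carefully chosen order so that the properties obtained from earlier applications survive the later ones. The key observation is that Lemma~\ref{lemma:large.matchings} produces arbitrarily large matching-friendly sets (giving property~(2)), Lemma~\ref{lemma:matchings.vs.fixed.points} arranges the fixed-point condition (giving property~(3)) while moving points into the dense set $Y$, and these must be reconciled with the near-optimality condition~(1) coming from the definition of $\mu(X,G)$.

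First I would unwind the definition of $\mu(X,G)$: since $\mu(X,G) = \inf_{E}\inf_{\mathcal{U}}\sup_{F}\inf_{g}\mu(F,g(F),\mathcal{U})/|F|$, for the given $E$ and $\mathcal{U}$ there exists some $F_0 \in \mathcal{F}_+(X)$ with $\inf_{g \in E}\mu(F_0,g(F_0),\mathcal{U}) \geq (\mu(X,G) - \epsilon)|F_0|$. This $F_0$ lives in $X$, need not meet $Y$, and is of fixed size, so it satisfies a proportional version of~(1) but neither~(2) nor~(3). To upgrade the size I would pass to an open star-refinement: by Lemma~\ref{lemma:uniform.star.refinements} choose a finite open uniform covering refining $\mathcal{U}$ — or more simply invoke Lemma~\ref{lemma:large.matchings} with a finite open covering $\mathcal{U}'$ that the matching quantities can be compared against, yielding for each $m$ a set $F'$ with $|F'| = m|F_0|$ and $\inf_{g\in E}\mu(F',g(F'),\mathcal{U}') \geq m\inf_{g\in E}\mu(F_0,g(F_0),\mathcal{U}')$, so that choosing $m \geq n/|F_0|$ secures both the size bound~(2) and the proportional matching bound~(1).

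Next I would invoke Lemma~\ref{lemma:matchings.vs.fixed.points} to move the set into $Y$ and install the fixed-point property. Applying it to the enlarged set $F'$ (in the role of its $F_0$) with the dense subset $Y$ produces $F_1 \in \mathcal{F}(Y)$ and a bijection $\Phi \colon F_1 \to F'$ satisfying its conditions~(1) and~(2); its condition~(2), namely $g(x)=x$ for all $x \in F_1 \cap g(F_1)$, is exactly the desired property~(3). It remains to transfer the matching lower bound from $F'$ back to $F_1$, and this is where Lemma~\ref{lemma:equivalent.matchings} enters: condition~(1) of Lemma~\ref{lemma:matchings.vs.fixed.points} says $x \in \bigcap\{U \in \mathcal{U} \mid \Phi(x) \in U\}$, which is precisely the hypothesis needed to conclude $\mu(F_1,g(F_1),\mathcal{U}) \geq \mu(F',g(F'),\mathcal{U})$ for each $g \in E$ — though I must check that the covering appearing in that hypothesis, $\mathcal{U} \wedge \bigwedge_{g\in E}g^{-1}(\mathcal{U})$, is compatible, which likely forces me to run Lemma~\ref{lemma:matchings.vs.fixed.points} with the finite open covering $\mathcal{U}^* \defeq \mathcal{U} \wedge \bigwedge_{g \in E}g^{-1}(\mathcal{U})$ rather than with $\mathcal{U}$ itself, and then to verify the bounds survive refining back to $\mathcal{U}$.

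The main obstacle I anticipate is precisely this bookkeeping of coverings: Lemma~\ref{lemma:large.matchings} and Lemma~\ref{lemma:matchings.vs.fixed.points} require \emph{open} coverings, while the definition of $\mu(X,G)$ furnishes an arbitrary $\mathcal{U} \in \mathcal{N}(X)$, and Lemma~\ref{lemma:equivalent.matchings} is phrased for the augmented covering $\mathcal{U} \wedge \bigwedge_{g\in E}g^{-1}(\mathcal{U})$. The delicate point is to order the three applications — enlarge, then densify-and-fix — against the \emph{same} open covering, ensuring each step's matching inequality refers to a covering that refines the target $\mathcal{U}$, so that a final application of monotonicity of $\mu(\cdot,\cdot,\cdot)$ under refinement delivers all three conclusions simultaneously for $\mathcal{U}$. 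Once the coverings are aligned, each inequality is a direct citation, and the size and fixed-point conditions are immediate from the constructions, so no substantial new estimate is needed beyond this alignment.
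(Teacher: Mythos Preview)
Your approach is exactly the paper's: pass to an open refinement, pick a near-optimal $F_{0}$, blow it up via Lemma~\ref{lemma:large.matchings}, then push it into $Y$ with the fixed-point condition via Lemma~\ref{lemma:matchings.vs.fixed.points}, and transfer the matching bound with Lemma~\ref{lemma:equivalent.matchings}. The one point where your ordering slips is that you select $F_{0}$ for the original $\mathcal{U}$ and only afterwards refine to an open $\mathcal{U}'$; since refinement can only \emph{decrease} $\mu(F_{0},g(F_{0}),\cdot)$, the bound $(\mu(X,G)-\epsilon)|F_{0}|$ need not survive for $\mathcal{U}'$. The fix, which the paper adopts, is to first choose the open refinement $\mathcal{V}$ of $\mathcal{U}$ and then use the definition of $\mu(X,G)$ (an infimum over \emph{all} finite uniform coverings) to pick $F_{0}$ already satisfying $\inf_{g\in E}\mu(F_{0},g(F_{0}),\mathcal{V})\geq(\mu(X,G)-\epsilon)|F_{0}|$. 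Your final paragraph essentially anticipates this; you also correctly note that Lemma~\ref{lemma:matchings.vs.fixed.points} must be applied with the open covering $\mathcal{V}\wedge\bigwedge_{g\in E}g^{-1}(\mathcal{V})$ so that Lemma~\ref{lemma:equivalent.matchings} applies, after which monotonicity under $\mathcal{U}\preceq\mathcal{V}$ yields conclusion~(1) for $\mathcal{U}$.
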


\begin{proof} By Lemma~\ref{lemma:uniform.star.refinements}, there exists an open covering $\mathcal{V} \in \mathcal{N}(X)$ such that $\mathcal{U} \preceq \mathcal{V}$. Let $F_{0} \in \mathcal{F}_{+}(G)$ such that $\inf_{g \in E} \mu (F_{0},g(F_{0}),\mathcal{V}) \geq (\mu (X,G) - \epsilon)|F_{0}|$. By Lemma~\ref{lemma:large.matchings}, there exists $F_{1} \in \mathcal{F}_{+}(X)$ such that $|F_{1}| = n|F_{0}|$ and $\mu (F_{1},g(F_{1}),\mathcal{V}) \geq n \mu (F_{0},g(F_{0}),\mathcal{V})$ for each $g \in E$. Due to Lemma~\ref{lemma:matchings.vs.fixed.points} and Lemma~\ref{lemma:equivalent.matchings}, there exists some $F \in \mathcal{F}_{+}(Y)$ such that $|F| = |F_{1}|$ as well as $\mu (F,g(F),\mathcal{V}) \geq \mu (F_{1},g(F_{1}),\mathcal{V})$ and $F \cap g(F) \subseteq \{ x \in X \mid g(x) = x \}$ for every $g \in G$. Evidently, the conditions (2) and (3) are satisfied. Furthermore, we conclude that \begin{align*}
	\inf_{g \in E} \mu (F,g(F),\mathcal{U}) &\geq \inf_{g \in E} \mu (F,g(F),\mathcal{V}) \geq \inf_{g \in E} \mu (F_{1},g(F_{1}),\mathcal{V}) \geq n \inf_{g \in E} \mu (F_{0},g(F_{0}),\mathcal{V}) \\
	&\geq n(\mu (X,G) - \epsilon)|F_{0}| = (\mu (X,G) - \epsilon)|F_{1}| = (\mu (X,G) - \epsilon)|F| .
\end{align*} This proves the claim. \end{proof}

Now, everything is prepared to deduce the aforementioned result for perfect Hausdorff dynamical systems.

\begin{thm}\label{theorem:second.main.theorem} Let $X$ be a perfect Hausdorff uniform space. If $G$ is a subgroup of $\Aut(X)$, then $\omega (X,G) = 2-\mu (X,G)$. \end{thm}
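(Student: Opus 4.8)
The inequality $\omega(X,G) \ge 2 - \mu(X,G)$ is already provided by Theorem~\ref{theorem:first.main.theorem}, which holds for arbitrary dynamical systems. Hence the whole task is to establish the reverse inequality $\omega(X,G) \le 2 - \mu(X,G)$, and this is where the hypotheses that $X$ be perfect and Hausdorff will enter. Unwinding Definition~\ref{definition:asymptotic.uniform.complexity}, it suffices to fix $E \in \mathcal{F}(G)$, $\mathcal{U} \in \mathcal{N}(X)$ and $\epsilon \in (0,\infty)$, and to exhibit a single $\mathcal{V} \in \mathcal{N}(X)$ with $\mathcal{U} \preceq \mathcal{V}$ such that $\sup_{g \in E} N(\mathcal{V} \vee g(\mathcal{V})) / N(\mathcal{V}) \le 2 - \mu(X,G) + \epsilon$. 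In essence the proof reverses the passage carried out in Theorem~\ref{theorem:first.main.theorem}: there a covering was turned into a large matching, whereas here a large matching must be turned into an economical covering.

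The source of the matching is Lemma~\ref{lemma:large.matchings.vs.fixed.points}, which I would apply with $Y \defeq X$ (trivially dense in itself). Given a suitable star-refinement of $\mathcal{U}$ and a large integer $n$, this lemma yields a finite set $F \in \mathcal{F}_+(X)$ enjoying three features simultaneously: first, $\inf_{g \in E} \mu(F,g(F),\mathcal{U}) \ge (\mu(X,G) - \epsilon)|F|$, so that $F$ carries near-maximal matchings at once for every $g \in E$; second, $|F| \ge n$, which will absorb additive error terms of order $|\mathcal{U}|$; and third, $F \cap g(F) \subseteq \{x \mid g(x) = x\}$ for all $g \in E$. The perfectness of $X$ is exactly what makes Lemma~\ref{lemma:large.matchings} and Lemma~\ref{lemma:matchings.vs.fixed.points}, and hence Lemma~\ref{lemma:large.matchings.vs.fixed.points}, available: every non-empty open set is infinite, so matchings may be inflated by an arbitrary factor $n$ and their domains relocated while keeping coincidences confined to fixed points.

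With $F$ in hand, the plan is to build $\mathcal{V}$ in the spirit of the construction in the proof of Proposition~\ref{proposition:asymptotic.uniform.complexity}, excising $F$ from a star-refinement $\mathcal{U}_0$ of $\mathcal{U}$ and adjoining cells adapted to the points of $F$; Lemma~\ref{lemma:excision.of.uniform.coverings.a} guarantees that the result is a genuine finite uniform covering refining $\mathcal{U}$, and the separation of the points of $F$ forces $N(\mathcal{V}) \ge |F|$. The decisive step is then, for each $g \in E$, to produce a refinement $\mathcal{W}$ of $\mathcal{V} \vee g(\mathcal{V})$ of size at most $2|F| - \mu(F,g(F),\mathcal{U}) + O(|\mathcal{U}_0|^2)$. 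Here a fixed maximum matching $\sigma$ in $\mathcal{B}(F,g(F),\mathcal{U})$ must be converted into a genuine saving in the covering: the bulk $\{U \setminus (F \cup g(F)) \mid U \in \mathcal{U}_0 \vee g(\mathcal{U}_0)\}$ contributes $O(|\mathcal{U}_0|^2)$ cells, and the matched pairs must be absorbed so as to remove $\mu(F,g(F),\mathcal{U})$ cells from the naive count $2|F|$ arising from handling $F$ and $g(F)$ separately; the fixed-point property keeps the roles of $F$ and $g(F)$ disjoint away from the self-matched fixed points, so that the bookkeeping closes. This is the heart of the argument and the main obstacle: one has to design $\mathcal{V}$ coarsely enough, guided by the $g$-geometry of $\sigma$, that the matched cells of $\mathcal{V}$ and $g(\mathcal{V})$ become compatible and merge under a common refinement, yet finely enough that the separation bound $N(\mathcal{V}) \ge |F|$ survives, and to do so uniformly for all $g \in E$ at once. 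This tension is precisely why naive separated cells (which force the ratio up to $2$) must be replaced by a matching-guided construction that makes a $\mu(X,G)$-fraction of the structure approximately $g$-invariant.

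Granting the covering bound, the remainder is a short estimate. For every $g \in E$ one obtains
\[
	\frac{N(\mathcal{V} \vee g(\mathcal{V}))}{N(\mathcal{V})} \le \frac{2|F| - (\mu(X,G) - \epsilon)|F| + O(|\mathcal{U}_0|^2)}{|F|} = 2 - \mu(X,G) + \epsilon + \frac{O(|\mathcal{U}_0|^2)}{|F|} ,
\]
and, since $|F| \ge n$ was chosen with $n$ large relative to $|\mathcal{U}_0|^2 \epsilon^{-1}$, the last term is at most $\epsilon$. Taking the supremum over $g \in E$ and then the infimum over admissible $\mathcal{V}$ shows that the inner infimum in Definition~\ref{definition:asymptotic.uniform.complexity} is at most $2 - \mu(X,G) + 2\epsilon$; letting $\epsilon \to 0$ and taking the suprema over $E$ and $\mathcal{U}$ gives $\omega(X,G) \le 2 - \mu(X,G)$. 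Combined with Theorem~\ref{theorem:first.main.theorem}, this yields the claimed equality.
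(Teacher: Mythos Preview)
Your overall architecture matches the paper's: reduce to $\omega(X,G)\le 2-\mu(X,G)$, invoke Lemma~\ref{lemma:large.matchings.vs.fixed.points} to obtain a large $F$ with near-optimal matchings and the fixed-point property, manufacture $\mathcal{V}$ by excising $F$ from a star-refinement, record $N(\mathcal{V})\ge|F|$, and for each $g$ exhibit a small refinement $\mathcal{W}$ of $\mathcal{V}\vee g(\mathcal{V})$.

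Two genuine gaps remain, however. First, you misplace the role of the matching. The tension you flag---designing $\mathcal{V}$ ``guided by the $g$-geometry of $\sigma$'' while staying uniform over all $g\in E$---is a phantom obstacle. In the actual argument $\mathcal{V}$ depends only on an open star-refinement $\mathcal{U}_1$ and on $F$: with $U(x)\defeq\St(x,\mathcal{U}_1)\setminus(F\setminus\{x\})$ one takes $\mathcal{V}\defeq\{U\setminus F\mid U\in\mathcal{U}_1\}\cup\{U(x)\mid x\in F\}$ (Lemma~\ref{lemma:excision.of.uniform.coverings.b}, not Lemma~\ref{lemma:excision.of.uniform.coverings.a}). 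No matching enters $\mathcal{V}$; the matching is used only afterwards, in the $g$-dependent witness $\mathcal{W}$, where a different $\mathcal{W}$ for each $g$ is perfectly acceptable.

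Second, and more substantively, a raw maximum matching in $\mathcal{B}(F,g(F),\mathcal{U})$ does not suffice, and this is the technical step your sketch leaves open. The paper first secures the matching at a much finer scale $\mathcal{U}_3$ (chosen so that $\mathcal{U}_2\defeq\mathcal{U}_1\vee\bigvee_{h\in E}h(\mathcal{U}_1)\preceq\mathcal{U}_3^{\ast,n-1}$), and then passes it through Lemma~\ref{lemma:compatible.matchings} to obtain an injective $\phi\colon D\to g(F)$ satisfying $\phi(x)\in\St(x,\mathcal{U}_2)$ and, crucially, $\phi(x)=x$ for every $x\in F\cap g(F)$, at the price of $\tfrac{1}{n}|F|$ elements. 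These two properties are exactly what make the merged cells $V(x)\defeq U(x)\cap g\bigl(U(g^{-1}(\phi(x)))\bigr)$ function: the $\mathcal{U}_2$-closeness forces $g^{-1}(x)\in\St(g^{-1}(\phi(x)),\mathcal{U}_1)$, and the fixed-point compatibility of $\phi$ (together with property~(3) of $F$) ensures that both $x$ and $\phi(x)$ survive the deletions in the definition of $U(\cdot)$, so that $x,\phi(x)\in V(x)$. Without Lemma~\ref{lemma:compatible.matchings} these cells can fail to contain the points they must cover, and the count $|\mathcal{W}|\le |D|+2|F\setminus D|+|\mathcal{U}_1|^2=2|F|-|D|+|\mathcal{U}_1|^2$ collapses. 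Once this is in place, your final estimate is exactly the paper's.
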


\begin{proof} According to Theorem~\ref{theorem:first.main.theorem}, we just need to prove that $\omega (X,G) \leq 2-\mu (X,G)$. To this end, let $\epsilon \in (0,\infty)$, $E \in \mathcal{F}(G)$ and $\mathcal{U}_{0} \in \mathcal{N}(X)$. Put $\theta \defeq \mu (X,G)$ and $n \defeq \left\lceil \frac{4}{\epsilon} \right\rceil$. By Lemma~\ref{lemma:uniform.star.refinements}, there exists an open covering $\mathcal{U}_{1} \in \mathcal{N}(X)$ such that $\mathcal{U}_{0} \preceq^{\ast} \mathcal{U}_{1}$. Let \begin{displaymath}
	\mathcal{U}_{2} \defeq \mathcal{U}_{1} \vee \bigvee\nolimits_{g \in E} g(\mathcal{U}_{1}) .
\end{displaymath} By Lemma~\ref{lemma:uniform.star.refinements}, there exists $\mathcal{U}_{3} \in \mathcal{N}(X)$ such that $\mathcal{U}_{2} \preceq \mathcal{U}_{3}^{\ast, n-1}$. By Lemma~\ref{lemma:large.matchings.vs.fixed.points}, there exists $F \in \mathcal{F}_{+}(X)$ such that \begin{enumerate}
	\item[(1)] 		$\inf_{g \in E} \mu (F,g(F),\mathcal{U}_{3}) \geq \left(\theta-\frac{\epsilon}{4}\right)|F|$,
	\item[(2)] 		$|F| \geq \frac{2|\mathcal{U}_{1}|^{2}}{\epsilon}$,
	\item[(3)]		$\forall g \in E \, \forall x \in F \cap g(F) \colon \, g(x) = x$.
\end{enumerate} If $x \in F$, then $U(x) \defeq \St (x,\mathcal{U}_{1}) \setminus (F\setminus \{ x \} )$ is an open neighborhood of $x$ in $X$. Let $\mathcal{V} \defeq \{ U\setminus F \mid U \in \mathcal{U}_{1} \} \cup \{ U(x) \mid x \in F \}$. According to Lemma~\ref{lemma:excision.of.uniform.coverings.b}, $\mathcal{V}$ is a member of $\mathcal{N}(X)$. Moreover, $\mathcal{U}_{0} \preceq \{ U\setminus F \mid U \in \mathcal{U}_{1} \}$ and $\mathcal{U}_{0} \preceq \mathcal{U}_{1}^{\ast} \preceq \{ U(x) \mid x \in F \}$. Therefore, $\mathcal{U}_{0} \preceq \mathcal{V}$. We show that $N(\mathcal{V}) \geq |F|$. Evidently, $F \cap (U \setminus F) = \emptyset$ for all $U \in \mathcal{U}_{1}$ and $F \cap U(x) = \{ x \}$ for all $x \in F$. Hence, $N(\mathcal{V}) \geq |F|$. Let $g \in E$. We argue that $N(\mathcal{V} \vee g(\mathcal{V})) \leq (2-\theta )|F| + \epsilon|F|$. Due to Lemma~\ref{lemma:compatible.matchings}, there exist $D \subseteq F$ and an injective map $\phi \colon D \to g(F)$ such that \begin{enumerate}
	\item[(4)] 		$|D| \geq \mu (F,g(F),\mathcal{U}_{3}) - \frac{1}{n}|F|$,
	\item[(5)]		$\forall x \in F \cap g(F) \colon \, x \in D \wedge \phi(x) = x$,
	\item[(6)]		$\forall x \in D \colon \, \phi(x) \in \St (x,\mathcal{U}_{2})$.
\end{enumerate} Let $x \in D$. Of course, $V(x) \defeq U(x) \cap g(U(g^{-1}(\phi(x))))$ is open. We claim that $x \in V(x)$. By (6), there exists $U \in \mathcal{U}_{2}$ such that $\{ x,\phi(x) \} \subseteq U$, which implies that $\mathcal{U}_{1} \preceq \{ g^{-1}(U) \}$ and $\{ g^{-1}(x),g^{-1}(\phi(x)) \} \subseteq g^{-1}(U)$. Thus, $g^{-1}(x) \in \St(g^{-1}(\phi(x)),\mathcal{U}_{1})$. Besides, if $g^{-1}(x) \in F$, then $g(x) = x$ by (3) and $\phi(x) = x$ by (5). Hence, $g^{-1}(x) \in U(g^{-1}(\phi(x)))$. This shows that $x \in V(x)$. Let us consider the set \begin{displaymath}
	S \defeq F \cup g(F) = D \cup (F\setminus D) \cup \phi (D) \cup (g(F)\setminus \phi (D)) .
\end{displaymath} Of course, there exist $\mathcal{W}_{0},\mathcal{W}_{1} \subseteq \mathcal{V} \vee g(\mathcal{V})$ such that $F\setminus D \subseteq \bigcup \mathcal{W}_{0}$, $g(F)\setminus \phi (D) \subseteq \bigcup \mathcal{W}_{1}$, $|\mathcal{W}_{0}| \leq |F\setminus D|$, and $|\mathcal{W}_{1}| \leq |g(F)\setminus \phi (D)|$. Let \begin{displaymath}
	\mathcal{W} \defeq \{ V(x) \mid x \in D \} \cup \mathcal{W}_{0} \cup \mathcal{W}_{1} \cup \left\{ U\setminus S \left| U \in \mathcal{U}_{1} \vee g(\mathcal{U}_{1}) \right\} .\right.
\end{displaymath} We prove that $S \subseteq \bigcup_{x \in D} V(x) \cup \bigcup (\mathcal{W}_{0} \cup \mathcal{W}_{1})$. Clearly, $(F\setminus D) \cup (g(F)\setminus \phi (D)) \subseteq \bigcup (\mathcal{W}_{0} \cup \mathcal{W}_{1})$ and $D \subseteq \bigcup_{x \in D} V(x)$. So, let $y \in \phi(D)$. Then $x \defeq \phi^{-1}(y) \in D$. We are going to show that $y \in V(x)$. Obviously, $y = g(g^{-1}(y)) \in g(U(g^{-1}(y))) = g(U(g^{-1}(\phi(x))))$. Besides, by (6), there is $U \in \mathcal{U}_{2}$ such that $\{ x,y \} \subseteq U$. Hence, as $\mathcal{U}_{1} \preceq \mathcal{U}_{2}$, we conclude that $y \in \St (x,\mathcal{U}_{1})$. Moreover, if $y \in F$, then $g(y) = y$ by (3) and $\phi(y) = y$ by (5), which implies that $x = y$. Consequently, $y \in U(x)$ and thus $y \in V(x)$. This proves that $S \subseteq \bigcup_{x \in D} V(x) \cup \bigcup (\mathcal{W}_{0} \cup \mathcal{W}_{1})$. By Lemma~\ref{lemma:excision.of.uniform.coverings.b}, $\mathcal{W}$ is an element of $\mathcal{N}(X)$. It remains to be shown that $\mathcal{V} \vee g(\mathcal{V}) \preceq \mathcal{W}$. Clearly, $\mathcal{V} \vee g(\mathcal{V}) \preceq \{ V(x) \mid x \in D \} \cup \mathcal{W}_{0} \cup \mathcal{W}_{1}$. Moreover, $\left( U \cap g(V) \right)\setminus S = (U\setminus F) \cap g(V\setminus F) \in \mathcal{V} \vee g(\mathcal{V})$ for all $U,V \in \mathcal{U}_{1}$, and thus $\{ U\setminus S \mid U \in \mathcal{U}_{1} \vee g(\mathcal{U}_{1}) \} \subseteq \mathcal{V} \vee g(\mathcal{V})$. It follows that $\mathcal{V} \vee g(\mathcal{V}) \preceq \mathcal{W}$. Consequently, \begin{align*}
	N\left(\mathcal{V} \vee g(\mathcal{V})\right) &\leq |\mathcal{W}| \leq |D| + |F\setminus D| + |g(F)\setminus \phi (D)| + |\mathcal{U}_{1}|^{2} \\
	&\stackrel{(2)}{\leq} |F| + |F\setminus D| + \frac{\epsilon }{2}|F| \stackrel{(4)}{\leq} |F| + |F| - \mu (F,g(F),\mathcal{U}_{3}) + \frac{1}{n}|F| + \frac{\epsilon }{2}|F| \\
	&\stackrel{(1)}{\leq} 2|F| - \left(\theta-\frac{\epsilon}{4}\right)|F| + \frac{1}{n}|F| + \frac{\epsilon }{2}|F| \leq (2-\theta )|F| + \epsilon|F| ,
\end{align*} and thus \begin{displaymath}
	\frac{N\left(\mathcal{V} \vee g(\mathcal{V})\right)}{N(\mathcal{V})} \leq \frac{(2-\theta )|F| + \epsilon|F|}{|F|} = 2-\theta + \epsilon .
\end{displaymath} This substantiates that $\omega (X,G) \leq 2 -\theta$. \end{proof}

\begin{cor}\label{corollary:amenable} Let $(X,G)$ be a dynamical system. If $\omega (X,G) = 1$, then $(X,G)$ is amenable. Conversely, if $X$ is a perfect Hausdorff space and $(X,G)$ amenable, then $\omega (X,G) = 1$. \end{cor}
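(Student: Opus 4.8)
The plan is to derive both implications purely by combining three results that are already available: the one-sided inequality of Theorem~\ref{theorem:first.main.theorem}, the exact identity of Theorem~\ref{theorem:second.main.theorem}, and the amenability characterization via the mean topological matching number recorded in Theorem~\ref{theorem:mean.topological.matching.number}. No genuinely new argument is required beyond a short numerical manipulation, so the corollary is essentially a formal bookkeeping exercise tying these pieces together.

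For the first implication I would assume $\omega (X,G) = 1$ and feed this into Theorem~\ref{theorem:first.main.theorem}, which gives $1 = \omega (X,G) \geq 2 - \mu (X,G)$ and hence $\mu (X,G) \geq 1$. Since the mean topological matching number is confined to the interval $[0,1]$ by its very definition (Definition~\ref{definition:mean.topological.matching.number}), this forces $\mu (X,G) = 1$. The first half of Theorem~\ref{theorem:mean.topological.matching.number} then immediately yields that $(X,G)$ is amenable. It is worth noting that this direction needs no hypothesis on $X$ beyond being a uniform space, matching the unconditional formulation of the statement.

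For the converse I would invoke the perfect Hausdorff hypothesis. Assuming $(X,G)$ is amenable with $X$ perfect Hausdorff, the second half of Theorem~\ref{theorem:mean.topological.matching.number} gives $\mu (X,G) = 1$. Inserting this into the exact identity of Theorem~\ref{theorem:second.main.theorem} — which is precisely the place where perfectness and the Hausdorff property are used to upgrade the one-sided bound of Theorem~\ref{theorem:first.main.theorem} to an equality — produces $\omega (X,G) = 2 - \mu (X,G) = 1$, as desired.

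I do not anticipate any real obstacle, since both theorems have already been established and the corollary follows formally. The only points demanding a moment of care are to use the admissible range $[0,1]$ of $\mu (X,G)$ in the first direction, so that the bound $\mu (X,G) \geq 1$ can legitimately be promoted to equality, and to confirm in the converse that the perfect Hausdorff assumption is simultaneously the hypothesis required by both Theorem~\ref{theorem:mean.topological.matching.number} and Theorem~\ref{theorem:second.main.theorem}, so that the two results may be chained without any mismatch in standing assumptions.
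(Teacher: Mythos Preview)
Your proposal is correct and follows essentially the same approach as the paper, which simply states that the corollary follows from Theorem~\ref{theorem:first.main.theorem}, Theorem~\ref{theorem:mean.topological.matching.number}, and Theorem~\ref{theorem:second.main.theorem}. You have merely made explicit the routine chaining of these results (including the observation that $\mu(X,G) \in [0,1]$ forces $\mu(X,G)=1$ once $\mu(X,G)\geq 1$), which the paper leaves to the reader.
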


\begin{proof} This follows from Theorem~\ref{theorem:first.main.theorem}, Theorem~\ref{theorem:mean.topological.matching.number}, and Theorem~\ref{theorem:second.main.theorem}. \end{proof}

Finally in this section, we shall have a closer look at the asymptotic uniform complexity of the dynamical system associated to an arbitrary topological group in Definition~\ref{definition:amenable.topological.group}. This will lead to a novel characterization of amenability for non-discrete Hausdorff topological groups.

\begin{definition} If $G$ is a topological group, then we define the \emph{asymptotic uniform complexity} of $G$ to be the quantity \begin{displaymath}
	\left. \omega (G) \defeq \omega (G_{r},\lambda_{G}(G)) = \sup_{E \in \mathcal{F}(G)} \sup_{\mathcal{U} \in \mathcal{N}(G_{r})} \inf \left\{ \sup_{g \in E} \frac{N\left(\mathcal{V} \vee g(\mathcal{V})\right)}{N(\mathcal{V})} \, \right| \mathcal{V} \in \mathcal{N}(G_{r}), \, \mathcal{U} \preceq \mathcal{V} \right\} .
\end{displaymath} \end{definition}

\begin{cor}\label{corollary:groups} Suppose $G$ to be an arbitrary non-discrete Hausdorff topological group. Then $\omega (G) = 2-\mu(G)$. Moreover, $G$ is amenable if and only if $\omega(G) = 1$. \end{cor}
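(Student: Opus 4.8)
The plan is to derive Corollary~\ref{corollary:groups} directly from the results already assembled in the excerpt, applied to the specific dynamical system $(G_{r}, \lambda_{G}(G))$. First I would recall that for any topological group $G$, the asymptotic uniform complexity $\omega(G)$ and the mean topological matching number $\mu(G)$ are by definition the corresponding quantities for the action of $G$ on its right uniformity, namely $\omega(G) = \omega(G_{r}, \lambda_{G}(G))$ and $\mu(G) = \mu(G_{r}, \lambda_{G}(G))$. The key observation is that when $G$ is non-discrete and Hausdorff, the uniform space $G_{r}$ is a \emph{perfect} Hausdorff space: its topology is the original group topology, which is Hausdorff by assumption, and non-discreteness guarantees there are no isolated points. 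This is precisely the remark made just after Theorem~\ref{theorem:amenable.groups}, that non-discrete topological groups are perfect.

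Granting that $G_{r}$ is a perfect Hausdorff uniform space, the first assertion $\omega(G) = 2 - \mu(G)$ is then an immediate instance of Theorem~\ref{theorem:second.main.theorem} with $X = G_{r}$ and the subgroup $\lambda_{G}(G) \leq \Aut(G_{r})$, since that theorem asserts $\omega(X,G') = 2 - \mu(X,G')$ for every perfect Hausdorff uniform space $X$ and every subgroup $G'$ of $\Aut(X)$. For the second assertion, I would argue the two implications of the biconditional separately. If $\omega(G) = 1$, then by the first part $\mu(G) = 2 - \omega(G) = 1$, and Theorem~\ref{theorem:amenable.groups} (the direction requiring only $\mu(G)=1$, valid for every topological group) yields that $G$ is amenable; alternatively one may invoke directly the first half of Corollary~\ref{corollary:amenable}. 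Conversely, if $G$ is amenable, then since $G$ is Hausdorff, Theorem~\ref{theorem:amenable.groups} gives $\mu(G) = 1$, whence $\omega(G) = 2 - \mu(G) = 1$ by the equation just established.

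The only point requiring genuine care, and the one I expect to function as the main obstacle, is the verification that $G_{r}$ qualifies as a perfect Hausdorff uniform space so that Theorem~\ref{theorem:second.main.theorem} applies. The Hausdorff condition transfers cleanly, since the topology induced by the right uniformity coincides with the original group topology (noted at the end of Section~\ref{section:uniform.spaces}), and that topology is Hausdorff by hypothesis. The perfectness is where I would be most explicit: in a non-discrete Hausdorff topological group no point is isolated, because if some $g \in G$ were isolated then by homogeneity every point would be isolated, forcing the topology to be discrete and contradicting non-discreteness. Once this is in place, everything else is a formal concatenation of the cited theorems, so the proof is short; indeed, it reduces entirely to combining Theorem~\ref{theorem:second.main.theorem}, Theorem~\ref{theorem:amenable.groups}, and the perfectness observation, exactly parallel to how Corollary~\ref{corollary:amenable} is deduced.
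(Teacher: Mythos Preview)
Your proposal is correct and follows essentially the same approach as the paper: observe that a non-discrete Hausdorff topological group is perfect by homogeneity, then invoke Theorem~\ref{theorem:second.main.theorem} for the equation $\omega(G) = 2 - \mu(G)$, and deduce the amenability equivalence. The only cosmetic difference is that the paper cites Corollary~\ref{corollary:amenable} directly for the biconditional (which already packages both directions for perfect Hausdorff systems), whereas you route one implication through Theorem~\ref{theorem:amenable.groups}; both are valid and equivalent in substance.
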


\begin{proof} Note that $G$ is perfect because $G$ is homogeneous and not discrete. Accordingly, Theorem~\ref{theorem:second.main.theorem} asserts that $\omega (G) = 2-\mu(G)$. Moreover, due to Corollary~\ref{corollary:amenable}, $G$ is amenable if and only if $\omega(G) = 1$. \end{proof}

\section{Topologically free dynamical systems}\label{section:topologically.free.dynamical.systems}

In this section we shall establish an alternative characterization of amenability applying to topologically free dynamical systems (Theorem~\ref{theorem:third.main.theorem}), which particularly applies to topological groups (Corollary~\ref{corollary:third.main.theorem}).

\begin{definition} Let $X$ be a uniform space. A subgroup $G$ of $\Aut (X)$ (or the dynamical system $(X,G)$, resp.) is called \emph{topologically free} if $\{ x \in X \mid \forall g \in E\setminus \{ \id_{X} \} \colon \, g(x) \ne x \}$ is dense in $X$ for every $E \in \mathcal{F}(G)$. \end{definition}

A short moment of reflection reveals the following observation.

\begin{remark}\label{remark:topological.freeness} If $G$ is a topological group, then $\lambda_{G}(G)$ is a topologically free subgroup of $\Aut (G_{r})$ (cf.~Definition~\ref{definition:amenable.topological.group}). \end{remark}

Moreover, topologically free dynamical systems play an important role in the theory of semigroup compactifications (see \cite{AnalysisOnSemigroups}). To explain this, let $S$ be a compact Hausdorff left-topological monoid. Consider the group $G \defeq \{ x \in S \mid \exists y \in S \colon \, xy=yx=1_{S} \}$ as well as the injective homomorphism $\lambda \colon G \to \Aut (S)$ where $\lambda(g)(x) \defeq gx$ for all $g \in G$ and $s \in S$. Now, if $G$ is dense in $S$, then $\lambda (G)$ is a topologically free subgroup of $\Aut(S)$. Hence, semigroup compactifications of topological groups in the sense of \cite{AnalysisOnSemigroups} give rise to topologically free dynamical systems.

\begin{thm}\label{theorem:third.main.theorem} Let $X$ be a perfect Hausdorff uniform space and let $G$ be a topologically free subgroup of $\Aut (X)$. Then $(X,G)$ is amenable if and only if \begin{displaymath}
	\left. \sup_{E \in \mathcal{F}(G)} \sup_{\mathcal{U} \in \mathcal{N}(X)} \inf \left\{ \frac{N\left(\bigvee_{g \in E} g(\mathcal{V})\right)}{N(\mathcal{V})} \, \right| \mathcal{V} \in \mathcal{N}(X), \, \mathcal{U} \preceq \mathcal{V} \right\} = 1 .
\end{displaymath} \end{thm}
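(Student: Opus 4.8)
The plan is to relate the new invariant --- call it $\omega'(X,G)$ --- built from $N(\bigvee_{g \in E} g(\mathcal{V}))$ to the asymptotic uniform complexity $\omega(X,G)$ and hence, via Corollary~\ref{corollary:amenable}, to amenability. The statement asserts $\omega'(X,G) = 1$ iff $(X,G)$ is amenable, and since Corollary~\ref{corollary:amenable} already characterises amenability by $\omega(X,G) = 1$ for perfect Hausdorff $X$, it suffices to show that $\omega'(X,G) = 1$ if and only if $\omega(X,G) = 1$. The inequality $\omega(X,G) \le \omega'(X,G)$ is essentially immediate: taking just two indices $g, \id_X$ in the big join $\bigvee_{g \in E} g(\mathcal{V})$, one has $\mathcal{V} \vee g(\mathcal{V})$ as a sub-join, so $N(\mathcal{V} \vee g(\mathcal{V})) \le N(\bigvee_{h \in E \cup \{\id_X\}} h(\mathcal{V}))$, whence $\sup_{g \in E} N(\mathcal{V} \vee g(\mathcal{V}))/N(\mathcal{V})$ is dominated by the corresponding $\omega'$ expression (after passing to $E \cup \{\id_X\}$, which only enlarges the supremum over $\mathcal{F}(G)$). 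So if $\omega'(X,G) = 1$ then $\omega(X,G) = 1$, giving amenability by Corollary~\ref{corollary:amenable}.

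The substantive direction is the converse: assuming $(X,G)$ amenable (equivalently, by Corollary~\ref{corollary:amenable} and Theorem~\ref{theorem:second.main.theorem}, that $\mu(X,G) = 1$), I would prove $\omega'(X,G) \le 1$, i.e. that for every $E \in \mathcal{F}(G)$ and $\mathcal{U} \in \mathcal{N}(X)$ and every $\epsilon > 0$ there is a refinement $\mathcal{V}$ of $\mathcal{U}$ with $N(\bigvee_{g \in E} g(\mathcal{V})) \le (1+\epsilon)N(\mathcal{V})$. This is where topological freeness enters, and where I expect the main obstacle to lie. The idea is to imitate the covering construction in the proof of Theorem~\ref{theorem:second.main.theorem}, building $\mathcal{V}$ from a single large finite set $F$ obtained from Lemma~\ref{lemma:large.matchings.vs.fixed.points}, but now one must control a simultaneous join over all of $E$ rather than a single pairwise join $\mathcal{V} \vee g(\mathcal{V})$. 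The key simplification that topological freeness buys is that the density hypothesis lets us choose $F$ inside the set $\{x \mid \forall g \in E\setminus\{\id_X\}\colon g(x) \ne x\}$; combined with condition (3) of Lemma~\ref{lemma:large.matchings.vs.fixed.points} (which forces $F \cap g(F)$ to consist of fixed points of $g$), this yields $F \cap g(F) = \emptyset$ for every non-identity $g \in E$. That disjointness is precisely what is needed to make the orbit pieces $g(F)$, $g \in E$, essentially independent, so that a near-perfect matching for each pair propagates to a good bound on the full join.

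Concretely, I would set up $\mathcal{V} = \{U\setminus F \mid U \in \mathcal{U}_1\} \cup \{U(x) \mid x \in F\}$ as in Theorem~\ref{theorem:second.main.theorem}, so that $N(\mathcal{V}) \ge |F|$, and then bound $N(\bigvee_{g \in E} g(\mathcal{V}))$ from above by exhibiting a small refining covering. Because $\mu(X,G) = 1$, Lemma~\ref{lemma:large.matchings.vs.fixed.points}(1) gives near-perfect matchings $\mu(F, g(F), \mathcal{U}_3) \ge (1-\tfrac{\epsilon}{c})|F|$ for each $g \in E$ simultaneously; via Lemma~\ref{lemma:compatible.matchings} each yields an injection $\phi_g$ from almost all of $F$ into $g(F)$ with $\phi_g(x) \in \St(x,\mathcal{U}_2)$. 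The disjointness $F \cap g(F) = \emptyset$ means the combined image $\bigcup_{g \in E} g(F)$ is covered by sets of the form $V(x) = \bigcap_{g} g(U(g^{-1}(\phi_g(x))))$ indexed by $x \in F$, plus a small correction set of size $O(\epsilon|F| + |E|\,|\mathcal{U}_1|^{|E|})$ accounting for the unmatched points and the background pieces $U\setminus S$. The hard part will be the bookkeeping that keeps the count of refining sets at $(1+\epsilon)|F|$ rather than $|E|\cdot|F|$: one must verify that each matched point $x \in F$ and \emph{all} its partners $\phi_g(x)$ across $g \in E$ sit in a single common refining set, which requires that the stars nest correctly through $\mathcal{U}_1 \preceq^\ast \mathcal{U}_1$ and the choice $\mathcal{U}_2 = \mathcal{U}_1 \vee \bigvee_{g \in E} g(\mathcal{U}_1)$. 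Once that single-set containment is checked for each $x$, the cardinality estimate $N(\bigvee_{g \in E} g(\mathcal{V})) \le |F| + o(|F|) \le (1+\epsilon)N(\mathcal{V})$ follows, giving $\omega'(X,G) \le 1$ and completing the equivalence.
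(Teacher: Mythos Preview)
Your overall strategy matches the paper's proof almost exactly: the easy direction via $\omega(X,G)\le\omega'(X,G)$, and for the hard direction the construction of $\mathcal{V}$ from a finite set $F$ via Lemma~\ref{lemma:large.matchings.vs.fixed.points}, the matchings $\phi_{g}$ from Lemma~\ref{lemma:compatible.matchings}, the sets $V(x)=\bigcap_{g\in E}g(U(g^{-1}(\phi_{g}(x))))$, and the cardinality count. Two technical points, however, would cause your argument to break as written.

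First, the disjointness you extract from topological freeness is too weak. You choose $F$ inside $\{x\mid\forall g\in E\setminus\{\id_X\}\colon g(x)\ne x\}$ and conclude $F\cap g(F)=\emptyset$ for non-identity $g\in E$. But to verify $\phi_{g}(x)\in V(x)$ you must check, for every $h\in E$, that $h^{-1}(\phi_{g}(x))\notin F\setminus\{h^{-1}(\phi_{h}(x))\}$; since $\phi_{g}(x)\in g(F)$, the point $h^{-1}(\phi_{g}(x))$ lies in $h^{-1}g(F)$, and you need to control $F\cap h^{-1}g(F)$ for $h^{-1}g\in E^{-1}E$, not just for elements of $E$. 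The paper therefore takes $Y=\{x\mid\forall g\in(E^{-1}E)\setminus\{\id_X\}\colon g(x)\ne x\}$ and applies Lemma~\ref{lemma:large.matchings.vs.fixed.points} with $E^{-1}E$ in condition~(3); this forces any coincidence $h^{-1}(\phi_{g}(x))\in F$ to imply $g=h$ and $\phi_{g}(x)=\phi_{h}(x)$, which is what the containment check actually needs. Second, copying $U(x)=\St(x,\mathcal{U}_{1})\setminus(F\setminus\{x\})$ verbatim from Theorem~\ref{theorem:second.main.theorem} is not enough here: showing $h^{-1}(\phi_{g}(x))\in\St(h^{-1}(\phi_{h}(x)),\,\cdot\,)$ requires a triangle-inequality step through $h^{-1}(x)$, which costs one extra star. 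The paper accordingly defines $U(x)=\St(x,\mathcal{U}_{1}^{\ast})\setminus(F\setminus\{x\})$ and arranges $\mathcal{U}_{0}\preceq\mathcal{U}_{1}^{\ast\ast}$. With these two adjustments your sketch goes through and coincides with the paper's proof.
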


\begin{proof} ($\Longleftarrow $) By assumption, it follows that $\omega(X,G) = 1$. Consequently, Corollary~\ref{corollary:amenable} asserts that $(X,G)$ is amenable.

($\Longrightarrow $) Suppose $(X,G)$ to be amenable. By Theorem~\ref{theorem:mean.topological.matching.number}, it follows that $\mu(X,G) = 1$. Now, let $\epsilon \in (0,\infty)$, $E \in \mathcal{F}(G)$ and $\mathcal{U}_{0} \in \mathcal{N}(X)$. Put \begin{displaymath}
	\delta \defeq \frac{\epsilon}{4|E|^{2}+2}
\end{displaymath} and $n \defeq \left\lceil \delta^{-1} \right\rceil$. By Lemma~\ref{lemma:uniform.star.refinements}, there exists an open covering $\mathcal{U}_{1} \in \mathcal{N}(X)$ such that $\mathcal{U}_{0} \preceq \mathcal{U}_{1}^{\ast \ast}$. Let \begin{displaymath}
	\mathcal{U}_{2} \defeq \mathcal{U}_{1} \vee \bigvee\nolimits_{g \in E} g(\mathcal{U}_{1}) .
\end{displaymath} By Lemma~\ref{lemma:uniform.star.refinements}, there exists $\mathcal{U}_{3} \in \mathcal{N}(X)$ such that $\mathcal{U}_{2} \preceq \mathcal{U}_{3}^{\ast, n-1}$. Since $(X,G)$ is topologically free, the set $Y \defeq \{ x \in X \mid \forall g \in (E^{-1}E)\setminus \{ \id_{X} \} \colon \, g(x) \ne x \}$ is dense in $X$. By Lemma~\ref{lemma:large.matchings.vs.fixed.points}, there exists $F \in \mathcal{F}_{+}(Y)$ such that \begin{enumerate}
	\item[(1)] 		$\inf_{g \in E} \mu (F,g(F),\mathcal{U}_{3}) \geq (1-\delta)|F|$,
	\item[(2)] 		$|F| \geq \frac{2|\mathcal{U}_{1}|^{|E|+1}}{\epsilon}$,
	\item[(3)]		$\forall g \in E^{-1}E \, \forall x \in F \cap g(F) \colon \, g(x) = x$.
\end{enumerate} If $x \in F$, then $U(x) \defeq \St(x,\mathcal{U}_{1}^{\ast}) \setminus (F\setminus \{ x \} )$ is an open neighborhood of $x$ in $X$. Let $\mathcal{V} \defeq \{ U\setminus F \mid U \in \mathcal{U}_{1} \} \cup \{ U(x) \mid x \in F \}$. According to Lemma~\ref{lemma:excision.of.uniform.coverings.b}, $\mathcal{V}$ is a member of $\mathcal{N}(X)$. Moreover, $\mathcal{U}_{0} \preceq \{ U\setminus F \mid U \in \mathcal{U}_{1} \}$ and $\mathcal{U}_{0} \preceq \mathcal{U}_{1}^{\ast \ast} \preceq \{ U(x) \mid x \in F \}$. Therefore, $\mathcal{U}_{0} \preceq \mathcal{V}$. We show that $N(\mathcal{V}) \geq |F|$. Evidently, $F \cap (U \setminus F) = \emptyset$ for all $U \in \mathcal{U}_{1}$ and $F \cap U(x) = \{ x \}$ for all $x \in F$. Hence, $N(\mathcal{V}) \geq |F|$. We argue that \begin{displaymath}
	N\left(\bigvee\nolimits_{g \in E} g(\mathcal{V})\right) \leq |F| + \epsilon|F| .
\end{displaymath} Applying Lemma~\ref{lemma:compatible.matchings}, we obtain the following: for each $g \in E$, there exist $D_{g} \subseteq F$ and an injective map $\phi_{g} \colon D_{g} \to g(F)$ such that \begin{enumerate}
	\item[(4)] 		$|D_{g}| \geq \mu (F,g(F),\mathcal{U}_{3}) - \frac{1}{n}|F|$,
	\item[(5)]		$\forall x \in F \cap g(F) \colon \, x \in D_{g} \wedge \phi_{g}(x) = x$,
	\item[(6)]		$\forall x \in D \colon \, \phi_{g}(x) \in \St (x,\mathcal{U}_{2})$.
\end{enumerate} Let $D \defeq \bigcap_{g \in E} D_{g}$. Note that $V(x) \defeq \bigcap\nolimits_{g \in E} g(U(g^{-1}(\phi_{g}(x))))$ is open in $X$ for each $x \in D$. Furthermore, consider the sets $S \defeq \bigcup\nolimits_{g \in E} g(F)$ and $T \defeq \bigcup\nolimits_{g \in E} g(F)\setminus \phi_{g}(D)$. Now, \begin{align*}
	|T| &\leq \sum\nolimits_{g \in E} |g(F)\setminus \phi_{g}(D)| = \sum\nolimits_{g \in E} |F\setminus D| \leq \sum\nolimits_{g \in E} \sum\nolimits_{h \in E} |F\setminus D_{h}| \\
	&\stackrel{(4)}{\leq} |E|^{2}((1+\delta )|F| - \mu (F,h(F),\mathcal{U}_{3})) \stackrel{(1)}{\leq} |E|^{2}((1+\delta ) - (1-\delta ))|F| = 2|E|^{2}\delta |F| \leq \frac{\epsilon }{2}|F|.
\end{align*} Of course, there exists $\mathcal{W}_{0} \subseteq \bigvee_{g \in E} g(\mathcal{V})$ such that $T \subseteq \bigcup \mathcal{W}_{0}$ and $|\mathcal{W}_{0}| \leq |T|$. Let \begin{displaymath}
	\mathcal{W} \defeq \{ V(x) \mid x \in D \} \cup \mathcal{W}_{0} \cup \left\{ U\setminus S \, \left| U \in \mathcal{U}_{2} \right\} \right.
\end{displaymath} We prove that $S \subseteq \bigcup_{x \in D} V(x) \cup \bigcup \mathcal{W}_{0}$. Evidently, $T \subseteq \bigcup \mathcal{W}_{0}$. Henceforth, let $y \in S\setminus T$. There exists $g \in E$ such that $y \in \phi_{g}(D)$, that is, $x \defeq \phi_{g}^{-1}(y) \in D$. We show that $y \in V(x)$. To this end, let $h \in E$. By (6), there exists $U \in \mathcal{U}_{2}$ where $\{ x,y \} \subseteq U$, which implies that $\mathcal{U}_{1} \preceq \{ h^{-1}(U) \}$ and $\{ h^{-1}(x),h^{-1}(y) \} \subseteq h^{-1}(U)$. Moreover, once more due to (6), there exists some $V \in \mathcal{U}_{2}$ such that $\{ x,\phi_{h}(x) \} \subseteq V$, which means that $\mathcal{U}_{1} \preceq \{ h^{-1}(V) \}$ and $\{ h^{-1}(x),h^{-1}(\phi_{h}(x)) \} \subseteq h^{-1}(V)$. As it is true that $h^{-1}(U) \cap h^{-1}(V) \ne \emptyset$, we conclude that $\mathcal{U}_{1}^{\ast} \preceq \{ h^{-1}(U) \cup h^{-1}(V) \}$. Since $\{ h^{-1}(y), h^{-1}(\phi_{h}(x)) \} \subseteq h^{-1}(U) \cup h^{-1}(V)$, it follows that $h^{-1}(y) \in \St(h^{-1}(\phi_{h}(x)),\mathcal{U}_{1}^{\ast})$. Besides, if $h^{-1}(y) \in F$, then $h^{-1}(g(g^{-1}(y))) = g^{-1}(y)$ by (3) and hence $g = h$ as $g^{-1}(y) \in Y$, which implies that $y = \phi_{h}(x)$. Thus, $y \in h(U(h^{-1}(\phi_{h}(x))))$. Consequently, $y \in V(x)$. This proves that $S \subseteq \bigcup_{x \in D} V(x) \cup \bigcup \mathcal{W}_{0}$. According to Lemma~\ref{lemma:excision.of.uniform.coverings.b}, it follows that $\mathcal{W}$ is an element of $\mathcal{N}(X)$. We are left to show that $\bigvee_{g \in E} g(\mathcal{V}) \preceq \mathcal{W}$. Of course, $\bigvee_{g \in E} g(\mathcal{V}) \preceq \{ V(x) \mid x \in D \} \cup \mathcal{W}_{0}$. Moreover, \begin{displaymath}
	\left( \bigcap\nolimits_{g \in E} U_{g}\right)\setminus S = \bigcap\nolimits_{g \in E} g(U_{g}\setminus F) \in \bigvee\nolimits_{g \in E} g(\mathcal{V})
\end{displaymath} for all $(U_{g})_{g \in E} \in \mathcal{U}_{1}^{E}$. Thus, $\bigvee\nolimits_{g \in E} g(\mathcal{V}) \preceq \{ U\setminus S \mid U \in \mathcal{U}_{2} \}$ and hence $\bigvee\nolimits_{g \in E} g(\mathcal{V}) \preceq \mathcal{W}$. Consequently, \begin{align*}
	N\left(\bigvee\nolimits_{g \in E} g(\mathcal{V})\right) &\leq |\mathcal{W}| \leq |D| + |T| + |\mathcal{U}_{1}|^{|E|+1} \stackrel{(2)}{\leq} |F| + \frac{\epsilon}{2}|F| + \frac{\epsilon}{2}|F| = |F| + \epsilon |F| ,
\end{align*} and thus \begin{displaymath}
	\frac{N\left(\bigvee\nolimits_{g \in E} g(\mathcal{V})\right)}{N(\mathcal{V})} \leq \frac{|F| + \epsilon|F|}{|F|} = 1 + \epsilon .
\end{displaymath} This completes the proof. \end{proof}

Finally, let us explicitly explore the previous result for topological groups.

\begin{cor}\label{corollary:third.main.theorem} Let $G$ be a non-discrete Hausdorff topological group. Then $G$ is amenable if and only if \begin{displaymath}
	\left. \sup_{E \in \mathcal{F}(G)} \sup_{\mathcal{U} \in \mathcal{N}(G_{r})} \inf \left\{ \frac{N\left(\bigvee_{g \in E} g(\mathcal{V})\right)}{N(\mathcal{V})} \, \right| \mathcal{V} \in \mathcal{N}(G_{r}), \, \mathcal{U} \preceq \mathcal{V} \right\} = 1 .
\end{displaymath} \end{cor}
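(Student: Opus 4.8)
The plan is to obtain the statement as the special case of Theorem~\ref{theorem:third.main.theorem} in which the uniform space is $G_{r}$ and the acting group is the subgroup $\lambda_{G}(G) \leq \Aut(G_{r})$. Accordingly, all that remains is to check that the hypotheses of that theorem are satisfied and that, under the identification of $g \in G$ with $\lambda_{G}(g)$, the invariant appearing in the theorem coincides with the one displayed in the statement.

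First I would verify that $G_{r}$ is a perfect Hausdorff uniform space. Since the topology generated by the right uniformity is precisely the original topology of $G$, the space $G_{r}$ is Hausdorff because $G$ is. For perfectness I would argue exactly as in the proof of Corollary~\ref{corollary:groups}: a topological group is homogeneous, so either every point is isolated or none is; as $G$ is assumed non-discrete, no point of $G_{r}$ is isolated, whence $G_{r}$ is perfect. Next, Remark~\ref{remark:topological.freeness} supplies that $\lambda_{G}(G)$ is a topologically free subgroup of $\Aut(G_{r})$. Thus the hypotheses of Theorem~\ref{theorem:third.main.theorem} hold for the dynamical system $(G_{r},\lambda_{G}(G))$.

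By Definition~\ref{definition:amenable.topological.group}, the group $G$ is amenable precisely when the dynamical system $(G_{r},\lambda_{G}(G))$ is amenable, so Theorem~\ref{theorem:third.main.theorem} asserts that $G$ is amenable if and only if the invariant it associates to $(G_{r},\lambda_{G}(G))$ equals $1$. It then remains only to translate that invariant along $\lambda_{G}$: since $\lambda_{G}$ is an injective homomorphism, finite subsets of $\lambda_{G}(G)$ correspond bijectively to finite subsets $E$ of $G$, and for $g \in E$ the covering $\lambda_{G}(g)(\mathcal{V})$ is exactly the term written $g(\mathcal{V})$ in the statement. Hence the supremum over finite subsets of $\lambda_{G}(G)$ coincides with the supremum over $E \in \mathcal{F}(G)$ appearing in the displayed formula, and the two invariants agree, giving the asserted equivalence. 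There is no substantial obstacle beyond this bookkeeping; the only point that genuinely requires an argument is the perfectness of $G_{r}$, which rests on homogeneity together with the non-discreteness of $G$.
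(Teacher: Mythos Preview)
Your proposal is correct and follows essentially the same route as the paper: verify that $G_{r}$ is perfect Hausdorff via homogeneity and non-discreteness, invoke Remark~\ref{remark:topological.freeness} for topological freeness, and then apply Theorem~\ref{theorem:third.main.theorem}. The only difference is that you spell out the bookkeeping for the identification along $\lambda_{G}$, which the paper leaves implicit.
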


\begin{proof} As observed earlier, $G$ is perfect because $G$ is homogeneous and not discrete. Besides, Remark~\ref{remark:topological.freeness} asserts that $\lambda_{G}(G)$ is a topologically free subgroup of $\Aut (G_{r})$. Hence, the stated equivalence is an immediate consequence of Theorem~\ref{theorem:third.main.theorem}. \end{proof}

\section{Entropic dimension}\label{section:entropic.dimension}

In this section we provide a sufficient criterion for amenability concerning the group of Lipschitz-automorphisms of an arbitrary precompact pseudo-metric space in terms of its entropic dimension (Theorem~\ref{theorem:metric.case}). To this end, let us first recall some additional terminology concerning pseudo-metric spaces.

Let $X$ be a pseudo-metric space. Note that $X$ is precompact (cf.~Section~\ref{section:uniform.spaces}) if and only if, for each $r \in (0,\infty)$, there exists some $F \in \mathcal{F}(X)$ such that $X = \bigcup \{ B(x,r) \mid x \in F \}$. If $X$ is precompact, then we put \begin{displaymath}
	\gamma (r) \defeq \inf \left\{ |F| \left| \, F \in \mathcal{F}(X), \, X = \bigcup \{ B(x,r) \mid x \in F \} \right\} \right.
\end{displaymath} whenever $r \in (0,\infty)$, and we define the \emph{entropic dimension} of $X$ (as in \cite{cogrowth}) to be \begin{displaymath}
	\delta (X) \defeq \limsup_{r \to 0} - \frac{\log_{2}(\gamma (r))}{\log_{2}(r)} .
\end{displaymath} Let $Y$ be another pseudo-metric space. A map $f \colon X \to Y$ is said to be \emph{Lipschitz-continuous} if there exists $c \in [0,\infty)$ such that $d(f(x),f(y)) \leq cd(x,y)$ for all $x,y \in X$. A bijection $f \colon X \to Y$ is called a \emph{Lipschitz-isomorphism} if both $f$ and $f^{-1}$ are Lipschitz-continuous maps. By a \emph{Lipschitz-automorphism} of $X$, we mean a Lipschitz-isomorphism from $X$ to itself. The group of all Lipschitz-automorphisms of $X$ shall be denoted by $\Aut_{L}(X)$. Note that any Lipschitz-continuous map between pseudo-metric spaces is uniformly continuous with regard to the respective uniformities.

\begin{lem}\label{lemma:convergence} Let $(a_{n})_{n \in \mathbb{N}\setminus \{ 0 \}}$ be an increasing sequence in $[1,\infty)$. If $\limsup_{n \to \infty} \frac{\log_{2}(a_{n})}{n} = 0$, then $\liminf_{n \to \infty} \frac{a_{n+k}}{a_{n}} = 1$ for every $k \in \mathbb{N}$. \end{lem}

\begin{proof} Let $k \in \mathbb{N}$. The proof proceeds by contraposition. Assume that $\liminf_{n \to \infty} \frac{a_{n+k}}{a_{n}} > 1$. Then there exist $a \in (1,\infty)$ and $n_{0} \in \mathbb{N} \setminus \{ 0 \}$ such that $\frac{a_{n+k}}{a_{n}} \geq a$ for all $n \in \mathbb{N} \setminus \{ 0 \}$ with $n \geq n_{0}$. We conclude that $a_{mk+n_{0}} \geq a^{m}a_{n_{0}}$ and hence \begin{displaymath}
	\frac{\log_{2}(a_{mk+n_{0}})}{mk+n_{0}} \geq \frac{m\log_{2}(a) + \log_{2}(a_{n_{0}})}{mk+n_{0}} \geq \frac{m}{mk+n_{0}}\log_{2}(a)
\end{displaymath} for all $m \in \mathbb{N}$. Since $a > 1$, it follows that $\limsup_{n \to \infty} \frac{\log_{2}(a_{n})}{n} > 0$, which clearly constitutes a contradiction. \end{proof}

\begin{thm}\label{theorem:metric.case} Let $X$ be a pseudo-metric space. If $X$ is precompact and $\delta(X) = 0$, then $\omega (X,\Aut_{L}(X)) = 1$ and hence $(X,\Aut_{L}(X))$ is amenable. \end{thm}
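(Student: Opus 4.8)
The plan is to show $\omega(X,\Aut_{L}(X)) \le 1$; since the asymptotic uniform complexity always lies in $[1,\infty)$, this forces equality, and amenability then follows from Corollary~\ref{corollary:amenable}. Unwinding Definition~\ref{definition:asymptotic.uniform.complexity}, it suffices to prove that for every $E \in \mathcal{F}(\Aut_{L}(X))$, every $\mathcal{U} \in \mathcal{N}(X)$ and every $\epsilon > 0$ there is some $\mathcal{V} \in \mathcal{N}(X)$ with $\mathcal{U} \preceq \mathcal{V}$ and $\sup_{g \in E} N(\mathcal{V} \vee g(\mathcal{V}))/N(\mathcal{V}) \le 1 + \epsilon$. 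Fix such $E,\mathcal{U},\epsilon$. As $E$ is finite and consists of Lipschitz-automorphisms, I would choose a common constant $c \ge 1$ such that every $g \in E$ and its inverse satisfy $d(g(a),g(b)) \le c\,d(a,b)$ for all $a,b \in X$, and set $\Lambda \defeq 2(c^{2}+c)$.

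The core construction, for a radius $r > 0$ to be fixed later, is as follows. Precompactness yields a finite $r$-net $F_{r}$ with $|F_{r}| = \gamma(r)$, i.e.\ $X = \bigcup \{ B(x,r) \mid x \in F_{r} \}$. Put $R \defeq (c^{2}+c)r$ and $\mathcal{V} \defeq \{ B(x,R) \mid x \in F_{r} \}$; a short verification shows $\mathcal{V} \in \mathcal{N}(X)$, and since $\mathcal{U}$ is refined by $\{ B(y,r_{0}) \mid y \in X \}$ for some $r_{0} > 0$, one has $\mathcal{U} \preceq \mathcal{V}$ as soon as $R \le r_{0}$. For the denominator I note that any $\mathcal{W} \in \mathcal{N}(X)$ with $\mathcal{V} \preceq \mathcal{W}$ consists of sets of diameter below $2R$, so selecting a point from each non-empty member gives a $2R$-net of size $\le |\mathcal{W}|$; hence $N(\mathcal{V}) \ge \gamma(2R) = \gamma(\Lambda r)$. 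For the numerator, the key point is that a minimal $r$-ball covering $\mathcal{V}_{r}$ (of size $\gamma(r)$) refines $\mathcal{V} \vee g(\mathcal{V})$ for every $g \in E$ at once. Since refining a meet is the same as refining both factors, I must check $B(z,r) \subseteq B(x,R)$ and $B(z,r) \subseteq g(B(x',R))$ for suitable $x,x' \in F_{r}$. The first holds because $z$ lies within $r$ of $F_{r}$ and $R \ge 2r$. For the second I use the inclusion $B(g(x'),R/c) \subseteq g(B(x',R))$ together with the fact that $g(F_{r})$ is a $cr$-net (both consequences of the choice of $c$): the estimate $d(z,g(x')) < cr = R/c - r$ then yields $B(z,r) \subseteq B(g(x'),R/c) \subseteq g(B(x',R))$. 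Consequently $N(\mathcal{V} \vee g(\mathcal{V})) \le \gamma(r)$, so $\sup_{g \in E} N(\mathcal{V} \vee g(\mathcal{V}))/N(\mathcal{V}) \le \gamma(r)/\gamma(\Lambda r)$.

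It remains to select $r$ so that $\gamma(r)/\gamma(\Lambda r) \le 1 + \epsilon$ while keeping $R$ small, and this is where the hypothesis $\delta(X) = 0$ enters through Lemma~\ref{lemma:convergence}. I would consider the non-decreasing sequence $a_{n} \defeq \gamma(\Lambda^{-n})$ in $[1,\infty)$; substituting $r = \Lambda^{-n}$ into $-\log_{2}(\gamma(r))/\log_{2}(r)$ and using $\gamma \ge 1$, the assumption $\delta(X) = 0$ gives $\log_{2}(a_{n})/n \to 0$. Lemma~\ref{lemma:convergence} with $k = 1$ then yields $\liminf_{n} a_{n+1}/a_{n} = 1$, so there are arbitrarily large $n$ with $a_{n+1}/a_{n} \le 1+\epsilon$. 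Taking $r \defeq \Lambda^{-(n+1)}$ for such a large $n$ gives $\gamma(r)/\gamma(\Lambda r) = a_{n+1}/a_{n} \le 1+\epsilon$ and makes $R = (c^{2}+c)r$ as small as needed to guarantee $\mathcal{U} \preceq \mathcal{V}$. This produces the desired $\mathcal{V}$ and completes the argument.

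The main obstacle I anticipate is the numerator estimate $N(\mathcal{V} \vee g(\mathcal{V})) \le \gamma(r)$: naively refining a radius-$R$ covering by smaller balls fails near the boundary of the net balls, and the Lipschitz distortion of $g$ compounds this difficulty. The device that resolves it is to centre the balls of $\mathcal{V}$ at an $r$-net but inflate their radius to $R = (c^{2}+c)r$, so that every point sits comfortably in the interior of some member of $\mathcal{V}$ and of $g(\mathcal{V})$; this is precisely what forces the factor $\Lambda = 2(c^{2}+c)$ and what allows the entropic-dimension hypothesis, via Lemma~\ref{lemma:convergence}, to drive the ratio to $1$.
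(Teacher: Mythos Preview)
Your strategy is exactly the paper's: centre balls of a large radius $R$ at a fine net, bound $N(\mathcal{V})$ below by a covering number at scale $\approx R$, bound $N(\mathcal{V}\vee g(\mathcal{V}))$ above by a covering number at a finer scale depending on the Lipschitz constant, and then invoke $\delta(X)=0$ through Lemma~\ref{lemma:convergence} to push the ratio to $1$. The paper works in base $2$ and uses the slightly sharper lower bound $N(\mathcal{V})\ge\gamma(R)$ (each member of a refinement sits inside one of the original balls, so those balls already form a cover), whereas you use $N(\mathcal{V})\ge\gamma(2R)$ via diameters; both are fine.

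There is, however, one genuine gap in the numerator step. To conclude $N(\mathcal{V}\vee g(\mathcal{V}))\le\gamma(r)$ you need the refining cover $\mathcal{V}_r$ to lie in $\mathcal{N}(X)$, i.e.\ to be a \emph{uniform} covering, and a minimal $r$-ball cover of a precompact pseudo-metric space need not be uniform. For example, on $X=[0,1]\setminus\{\tfrac12\}$ with $r=\tfrac12$ the minimal net $\{0,1\}$ gives the covering $\{[0,\tfrac12),(\tfrac12,1]\}$, which has no Lebesgue number. The paper avoids this by taking its refining cover to consist of balls of radius $2^{-(n+l+1)}$ around a $2^{-(n+l+2)}$-net, so that a positive Lebesgue number is built in. In your set-up the same device works: replace $\mathcal{V}_r$ by $\{B(z,2r)\mid z\in F_r\}$, which is uniform with Lebesgue number $r$, and enlarge $R$ to $(c^{2}+2c)r$ so that the inclusions $B(z,2r)\subseteq B(z,R)$ and $B(z,2r)\subseteq B(g(x'),R/c)\subseteq g(B(x',R))$ still hold. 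This only changes $\Lambda$ to $2(c^{2}+2c)$, and the remainder of your argument is unaffected.
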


\begin{proof} We show that $\omega (X,\Aut_{L}(X)) = 1$. To this end, let $\epsilon \in (0,\infty)$, $E \in \mathcal{F}(\Aut_{L}(X))$ and $\mathcal{U} \in \mathcal{N}(X)$. There exists $l \in \mathbb{N}$ such that $d(g^{-1}(x),g^{-1}(y)) \leq 2^{l} d(x,y)$ for all $x,y \in X$ and $g \in E$. Moreover, there exists some $m \in \mathbb{N}$ where $\mathcal{U} \preceq \{ B(x,2^{-m}) \mid x \in X \}$. By assumption, \begin{displaymath}
	\limsup_{n \to \infty} \frac{\log_{2}(\gamma (2^{-n}))}{n} = \limsup_{n \to \infty} -\frac{\log_{2}(\gamma (2^{-n}))}{\log_{2}(2^{-n})} \leq \delta (X) = 0 .
\end{displaymath} Due to Lemma~\ref{lemma:convergence}, there exists $n \in \mathbb{N}$ where $m \leq n$ and $\gamma(2^{-(n+l+2)}) \leq (1 + \epsilon)\gamma(2^{-n})$. Let $F \in \mathcal{F}(X)$ such that $|F| = \gamma (2^{-(n+l+2)})$ and $X = \bigcup \{ B(x,2^{-(n+l+2)}) \mid x \in F \}$. We argue that $\mathcal{V} \defeq \{ B(x,2^{-n}) \mid x \in F \}$ is a member of $\mathcal{N}(X)$. For every $x \in X$, there exists some $y \in F$ such that $x \in B(y,2^{-(n+l+2)}) \subseteq B(y,2^{-(n+1)})$, which readily implies that $B(x,2^{-(n+1)}) \subseteq B(y,2^{-n}) \in \mathcal{V}$. That is, $\mathcal{V} \preceq \{ B(x,2^{-(n+1)}) \mid x \in X \}$. Therefore, $\mathcal{V} \in \mathcal{N}(X)$. Evidently, $\mathcal{U} \preceq \mathcal{V}$ and $N(\mathcal{V}) \geq \gamma(2^{-n})$. Now, let $g \in E$. We are going to show that $N(\mathcal{V} \vee g(\mathcal{V})) \leq \gamma (2^{-(n+l+2)})$. For this purpose, let $\mathcal{W} \defeq \{ B(x,2^{-(n+l+1)}) \mid x \in F \}$. We claim that $\mathcal{W}$ is a member of $\mathcal{N}(X)$. Indeed, for each $x \in X$, there exists some $y \in F$ such that $x \in B(y,2^{-(n+l+2)})$, which implies that $B(x,2^{-(n+l+2)}) \subseteq B(y,2^{-(n+l+1)}) \in \mathcal{W}$. That is, $\mathcal{W} \preceq \{ B(x,2^{-(n+l+2)}) \mid x \in X \}$. Therefore, $\mathcal{W} \in \mathcal{N}(X)$. Moreover, for each $x \in F$, there exists some $y \in F$ such that $g^{-1}(x) \in B(y,2^{-(n+l+1)}) \subseteq B(y,2^{-(n+1)})$, which readily implies that $g^{-1}(B(x,2^{-(n+l+1)})) \subseteq B(g^{-1}(x),2^{-(n+1)}) \subseteq B(y,2^{-n})$ and therefore $B(x,2^{-(n+l+1)}) \subseteq B(x,2^{-n}) \cap g(B(y,2^{-n}))$. We conclude that $\mathcal{V} \vee g(\mathcal{V}) \preceq \mathcal{W}$. Consequently, $N(\mathcal{V} \vee g(\mathcal{V})) \leq |\mathcal{W}| \leq |F| = \gamma (2^{-(n+l+2)})$. Thus, \begin{displaymath}
	\frac{N(\mathcal{V} \vee g(\mathcal{V}))}{N(\mathcal{V})} \leq \frac{\gamma (2^{-(n+l+2)})}{\gamma (2^{-n})} \leq 1+\epsilon .
\end{displaymath} This proves that $\omega (X,\Aut_{L}(X)) = 1$. Hence, Corollary~\ref{corollary:amenable} asserts that $(X,\Aut_{L}(X))$ is amenable. \end{proof}

\section{Topological entropy}\label{section:topological.entropy}

In this rather short concluding section we provide a sufficient condition for amenability applying to those dynamical systems where the acting group is finitely generated. More precisely, we show that vanishing topological entropy implies amenability (Theorem~\ref{theorem:vanishing.entropy.implies.amenability}).

\begin{definition}\label{definition:topological.entropy} Let $(X,G)$ be a dynamical system. If $E$ is a finite symmetric generating subset of $G$ containing the identity map, then we define the \emph{topological entropy} of $(X,G)$ with respect to $E$ to be the quantity \begin{displaymath}
	\eta_{E}(X,G) \defeq \sup_{\mathcal{U} \in \mathcal{N}(X)} \limsup_{n \to \infty} \frac{\log_{2} N \left( \bigvee_{g \in E^{n}} g(\mathcal{U})\right)}{n} .
\end{displaymath} \end{definition}

Of course, the precise value of the quantity introduced in Definition~\ref{definition:topological.entropy} depends upon the choice of a generating subset. However, the following observation reveals that the hypothesis of Theorem~\ref{theorem:vanishing.entropy.implies.amenability} is not affected by this matter of choice.

\begin{remark}\label{remark:topological.entropy} Let $(X,G)$ be a dynamical system. Suppose $E_{0}$ and $E_{1}$ to be finite symmetric generating subsets of $G$ containing the identity map. Then \begin{displaymath}
	\frac{1}{m} \eta_{E_{1}}(X,G) \leq \eta_{E_{0}}(X,G) \leq n \eta_{E_{1}}(X,G) ,
\end{displaymath} where $m \defeq \inf \{ k \in \mathbb{N}\setminus \{ 0 \} \mid E_{1} \subseteq E_{0}^{k} \}$ and $n \defeq \inf \{ k \in \mathbb{N}\setminus \{ 0 \} \mid E_{0} \subseteq E_{1}^{k} \}$. \end{remark}

\begin{thm}\label{theorem:vanishing.entropy.implies.amenability} Let $(X,G)$ be a dynamical system and let $E$ be a finite symmetric generating subset of $G$ containing the identity map. If $\eta_{E}(X,G) = 0$, then $\omega (X,G) = 1$ and $(X,G)$ is amenable. \end{thm}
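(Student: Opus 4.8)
# Proof Proposal

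The plan is to show directly that $\omega(X,G) = 1$ by bounding the one-generator ratios $N(\mathcal{V} \vee g(\mathcal{V}))/N(\mathcal{V})$ using a covering $\mathcal{V}$ built from an iterated join of the form $\bigvee_{h \in E^{n}} h(\mathcal{U})$. The amenability conclusion then follows immediately from Corollary~\ref{corollary:amenable}. The key heuristic is that vanishing entropy forces the sequence $a_{n} \defeq N\left(\bigvee_{h \in E^{n}} h(\mathcal{U})\right)$ to grow subexponentially, so that by a Fekete/limsup argument (in the spirit of Lemma~\ref{lemma:convergence}) one can find $n$ for which $a_{n+2}$ is only slightly larger than $a_{n}$. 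Such an $n$ yields a near-invariant covering in the relevant sense.

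First I would fix $\epsilon \in (0,\infty)$, a finite $E' \in \mathcal{F}(G)$, and $\mathcal{U} \in \mathcal{N}(X)$; since $E$ generates $G$, there is $k \in \mathbb{N}$ with $E' \subseteq E^{k}$, so it suffices to control ratios for $g \in E^{k}$, i.e.\ for a bounded number of "steps" in the generating set. Setting $a_{n} \defeq N\left(\bigvee_{h \in E^{n}} h(\mathcal{U})\right)$, I would verify that $(a_{n})$ is (weakly) increasing in $[1,\infty)$: since $\id_{X} \in E$ we have $E^{n} \subseteq E^{n+1}$, so the later join refines the earlier one, whence $a_{n} \leq a_{n+1}$ by monotonicity of $N$ under refinement. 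The hypothesis $\eta_{E}(X,G)=0$ gives $\limsup_{n\to\infty} \frac{\log_{2}(a_{n})}{n} = 0$, so Lemma~\ref{lemma:convergence} applies and yields $\liminf_{n\to\infty} \frac{a_{n+k}}{a_{n}} = 1$. Hence for suitable large $n$ the quantity $a_{n+k}/a_{n}$ is at most $1+\epsilon$.

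Next I would take $\mathcal{V} \defeq \bigvee_{h \in E^{n}} h(\mathcal{U})$ for such an $n$; this refines $\mathcal{U}$ (it is a join over $\id_X \in E^n$), so $\mathcal{U} \preceq \mathcal{V}$ as required. For $g \in E^{k} \supseteq E'$, the covering $g(\mathcal{V}) = \bigvee_{h \in E^{n}} gh(\mathcal{U})$ is a join over $gE^{n} \subseteq E^{n+k}$, so $\mathcal{V} \vee g(\mathcal{V})$ is refined by $\bigvee_{h \in E^{n+k}} h(\mathcal{U})$ (using that $E^{n} \cup gE^{n} \subseteq E^{n+k}$ since $g \in E^{k}$ and $\id_X \in E$). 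Monotonicity of $N$ then gives $N(\mathcal{V} \vee g(\mathcal{V})) \leq a_{n+k}$, while $N(\mathcal{V}) = a_{n}$, so $\frac{N(\mathcal{V} \vee g(\mathcal{V}))}{N(\mathcal{V})} \leq \frac{a_{n+k}}{a_{n}} \leq 1+\epsilon$ uniformly over $g \in E'$. Taking the infimum over refinements and then the suprema shows $\omega(X,G) \leq 1+\epsilon$ for all $\epsilon$, hence $\omega(X,G) = 1$.

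The main obstacle I anticipate is bookkeeping with the generating-set powers: one must pin down the correct exponent shift relating $E'$ to a power $E^{k}$ and ensure the refinement $E^{n} \cup gE^{n} \subseteq E^{n+k}$ holds, which relies crucially on $E$ being symmetric and containing $\id_{X}$ so that the powers $E^{n}$ are nested and the index $k$ absorbs both $\mathcal{V}$ and $g(\mathcal{V})$. A subtler point is checking that $N$ is genuinely monotone with respect to $\preceq$ (if $\mathcal{A} \preceq \mathcal{B}$ then any refinement of $\mathcal{B}$ refines $\mathcal{A}$, so $N(\mathcal{A}) \leq N(\mathcal{B})$) — this is elementary but must be invoked at each comparison. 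The remaining step, extracting the single index $n$ that simultaneously handles the ratio bound, follows cleanly from Lemma~\ref{lemma:convergence} once the sequence $(a_n)$ is set up correctly.
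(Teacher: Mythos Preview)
Your proposal is correct and follows essentially the same approach as the paper's proof: both take $\mathcal{V}=\bigvee_{h\in E^{n}}h(\mathcal{U})$ for a well-chosen $n$ and use subexponential growth via Lemma~\ref{lemma:convergence} to bound $N(\mathcal{V}\vee g(\mathcal{V}))/N(\mathcal{V})$. The only cosmetic difference is that the paper first replaces $E$ by $E_{1}\defeq E^{m}$ (invoking Remark~\ref{remark:topological.entropy} to keep zero entropy) and then uses the shift $k=1$ in Lemma~\ref{lemma:convergence}, whereas you keep $E$ and apply Lemma~\ref{lemma:convergence} directly with the shift $k$ determined by $E'\subseteq E^{k}$; these are equivalent bookkeeping choices.
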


\begin{proof} Let $\epsilon \in (0,\infty)$, $E_{0} \in \mathcal{F}(G)$ and $\mathcal{U} \in \mathcal{N}(X)$. There exists some $m \in \mathbb{N}\setminus \{ 0 \}$ such that $E_{0} \subseteq E_{1} \defeq E^{m}$. By Remark~\ref{remark:topological.entropy}, $\eta_{E_{1}}(X,G) = 0$. Due to Lemma~\ref{lemma:convergence}, there exists $n \in \mathbb{N}\setminus \{ 0 \}$ such that \begin{displaymath}
	N\left(\bigvee\nolimits_{g \in E_{1}^{n+1}} g(\mathcal{U})\right) \leq (1 + \epsilon )N\left(\bigvee\nolimits_{g \in E_{1}^{n}} g(\mathcal{U})\right) .
\end{displaymath} Of course, $\mathcal{V} \defeq \bigvee_{g \in E_{1}^{n}} g(\mathcal{U})$ is a member of $\mathcal{N}(X)$ refining $\mathcal{U}$. We argue that \begin{displaymath}
	\sup_{g \in E} N\left(\mathcal{V} \vee g(\mathcal{V})\right) \leq (1 + \epsilon )N(\mathcal{V}) .
\end{displaymath} To this end, let $\mathcal{W} \in \mathcal{N}(X)$ where $\bigvee_{g \in E_{1}^{n+1}} g(\mathcal{U}) \preceq \mathcal{W}$ and $|\mathcal{W}| = N\left(\bigvee\nolimits_{g \in E_{1}^{n+1}} g(\mathcal{U})\right)$. Evidently, if $g \in E$, then $\mathcal{W}$ refines $\mathcal{V} \vee g(\mathcal{V})$, whence \begin{displaymath}
	N\left(\mathcal{V} \vee g(\mathcal{V})\right) \leq |\mathcal{W}| = N\left(\bigvee\nolimits_{g \in E_{1}^{n+1}} g(\mathcal{U})\right) \leq (1 + \epsilon) N(\mathcal{V}) .
\end{displaymath} We conclude that $\omega(X,G) = 1$. Due to Corollary~\ref{corollary:amenable}, it follows that $(X,G)$ is amenable. \end{proof}

%%%%%%%%%%%%%%%%%%%%%%%%%%%%%%%%%%

%\small
%\bibliographystyle{amsalpha}
%\bibliography{bibtexfile}

\printbibliography

\end{document}